\newtheorem{theorem}   {Theorem}[section]
\newtheorem{proposition}  {Proposition}[section]
\newtheorem{lemma} {Lemma}[section]
\newtheorem{remark}   {Remark}[section]
\newtheorem{notation} {Notation}[section]
\newcommand{\e}{\varepsilon}
\newcommand{\te}{\tilde \varepsilon}
\newcommand{\tS}{\tilde{\mathcal S_n}}
\newcommand{\tN}{\tilde{\mathcal N}}
\newcommand{\Cyl}{\mathbb R \times \left[ \frac{\pi}{2},\frac{3\pi}{2} \right]}
\title{Free boundary minimal surfaces in the unit $3$-ball}
\author{Abigail Folha}
\address{Abigail Folha - Instituto de Matem\'atica - Departamento de Geometria, Universidade Federal Fluminense, R. M\' ario Santos Braga, S/N Campus do Valonguinho 24020-140 Niter\' oi RJ Brazil}
\email{abigailfolha@vm.uff.br}
\author{Frank Pacard}
\address{Centre de MathÃ©matiques Laurent Schwartz, \'Ecole Polytechnique-CNRS}
\email{frank.pacard@math.polytechnique.fr}
\author{Tatiana Zolotareva} 
\address{Centre de MathÃ©matiques Laurent Schwartz, \'Ecole Polytechnique-CNRS}
\email{zolotareva@math.polytechnique.fr}
\begin{document}

\maketitle

\begin{abstract}
In a recent paper A. Fraser and R. Schoen have proved the existence of free boundary minimal surfaces $\Sigma_n$ in $B^3$ which have genus $0$ and $n$ boundary components, for all $ n \geq 3$. For large $n$, we give an independent construction of $\Sigma_n$ and prove the existence of free boundary minimal surfaces $\tilde \Sigma_n$ in $B^3$ which have genus $1$ and $n$ boundary components. As $n$ tends to infinity, the sequence $\Sigma_n$ converges to a double copy of the unit horizontal (open) disk, uniformly on compacts of $B^3$ while the sequence $\tilde \Sigma_n$ converges to a double copy of the unit horizontal (open) punctured disk, uniformly on compacts of $B^3-\{0\}$.
\end{abstract}

\section{Introduction and statement of the result.}

\let\thefootnote\relax\footnotetext{F. Pacard and T. Zolotareva are partially supported by the ANR-2011-IS01-002 grant.}
\let\thefootnote\relax\footnotetext{T. Zolotareva is partially supported by the FMJH through the ANR-10-CAMP-0151-02 grant.}

In this paper, we are interested in minimal surfaces which are embedded in the Euclidean $3$-dimensional unit open ball $B^3$ and which meet $S^2$, the boundary of $B^3$, orthogonally.  Following \cite{Fra-Sch-1}, we refer to such minimal surfaces as {\em free boundary minimal surfaces}. 

\medskip

Obviously, the horizontal unit disk, which is the intersection of the horizontal plane passing through the origin with the unit $3$-ball, is an example of such free boundary minimal surface. Moreover, it is the only free boundary solution of topological disk type, \cite{Nit}. Let $ s_* > 0 $ be the solution of 
\[
s_*\, \tanh s_* =1.
\] 
The so called {\em critical catenoid} parameterized by
\[
(s , \theta) \mapsto \frac{1}{s_* \, \cosh s_*} \, \left( \cosh s \, \cos \theta, \cosh s\, \sin \theta, s\right) ,
\]
is another example of such a {\em free boundary minimal surface}. A. Fraser and M. Li conjectured that it is the only free boundary minimal surface of topological annulus type \cite{Fra-Li}.

\medskip

Free boundary minimal surfaces arise as critical points of the area among surfaces embedded in the unit $3$-ball whose boundaries lie on $S^2$ but are free to vary on $S^2$. The fact that the area is critical for variations of the boundary of the surface which are tangent to $S^2$ translates into the fact that the minimal surface meets $S^2$ orthogonally. 

\medskip

In a recent paper \cite{Fra-Sch-2}, A. Fraser and R. Schoen have proved the existence of free boundary minimal surfaces $\Sigma_n$ in $B^3$ which have genus $0$ and $n$ boundary components, for all $ n \geq 3$. For large $n$, these surfaces can be understood as the connected sum of two nearby parallel horizontal disks joined by $n$ boundary bridges which are close to scaled down copies of half catenoids obtained by diving a catenoid which vertical axis with a plane containing it, which are arranged periodically along the unit horizontal great circle of $S^2$. Furthermore, as $n$ tends to infinity, these free boundary minimal surfaces converge on compact subsets of $B^3$ to the horizontal unit disk taken with multiplicity two. 

\medskip

We give here another independent construction of $\Sigma_n$, for $n$ large enough. Our proof is very different from the proof of A. Fraser and R. Schoen and is more in the spirit of the proof of the existence of minimal surfaces in $S^3$ by doubling the Clifford torus by N. Kapouleas and S.-D. Yang \cite{Kap-Yan}. We also  prove the existence of free boundary minimal surfaces in $B^3$ which have genus $1$ and $n$ boundary components, for all $n$ large enough.

\medskip

To state our result precisely, we define $P_n$ to be the regular polygon with $n$-sides, which is included in the horizontal plane $\mathbb R^2 \times \{0\}$ and whose vertices are given by 
\[
 \left( \cos \left(\frac{2\pi j}{n} \right), \,  \sin \left(\frac{2\pi j}{n} \right), \, 0 \right) \in \mathbb R^3, \qquad \text{for} \qquad j=1, \ldots, n.
\]
We define $\mathfrak S_n \subset O(3)$ to be the subgroup of isometries of $\mathbb R^3$ which is generated by the orthogonal symmetry with respect to the horizontal plane $x_3=0$, the symmetry with respect to coordinate axis $Ox_1$ 
and the rotations around the vertical axis $Ox_3$ which leave $P_n$ globally invariant. 

\medskip

Our main result reads~:
\begin{theorem} \label{MainTheorem}
There exists $n_0 \geq 0$ such that, for each $n \geq n_0$, there exists a genus $0$ free boundary minimal surface $\Sigma_n$ and a genus $1$ free boundary minimal surface $\tilde \Sigma_n$ which are both embedded in $B^3$ and meet $S^2$ orthogonally along $n$ closed curves. 

\medskip

Both surfaces are invariant under the action of the elements of $\mathfrak S_n$ and, as $n$ tends to infinity, the sequence $\Sigma_n$ converges to a double copy of the unit horizontal (open) disk, uniformly on compacts of $B^3$ while the sequence $\tilde \Sigma_n$ converges to a double copy of the unit horizontal (open) punctured disk, uniformly on compacts of $B^3-\{0\}$.
\end{theorem}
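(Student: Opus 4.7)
The plan is to construct $\Sigma_n$ and $\tilde\Sigma_n$ via a singular perturbation / desingularization argument, in the spirit of the Kapouleas--Yang doubling construction of the Clifford torus, adapted to the free boundary setting. Both surfaces will be built by joining two nearly horizontal disks (parallel copies of the equatorial disk located at signed heights of order a small parameter $\varepsilon \sim 1/n$) by $n$ small catenoidal bridges placed symmetrically at the vertices of $P_n$ near the unit circle; the genus-$1$ surface $\tilde\Sigma_n$ requires in addition a small catenoidal neck joining the two disks near the origin. After suitably perturbing each disk to absorb the logarithmic tails of the bridges, cutting out small neighbourhoods of the attachment points, and grafting in rescaled half-catenoids (and, for $\tilde\Sigma_n$, a rescaled full catenoid at the centre) with smooth cut-off interpolations, one obtains an approximate free boundary minimal surface whose mean curvature and deviation from orthogonality along $S^2$ are supported in narrow transition regions and are quantitatively small in $n$.

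Deformations of the approximate surface will be encoded by a normal graph function $u$, taken to be $\mathfrak S_n$-invariant, and the system (vanishing mean curvature, orthogonality along $S^2$) will take the form $L_{n}\, u + Q_n(u) = E_n$, where $L_n$ is the Jacobi operator together with a Robin-type boundary condition coming from the second fundamental form of $S^2$, $Q_n$ collects the quadratic and higher-order nonlinearities, and $E_n$ is the error of the approximate solution. The linear analysis then splits into three regions: on each perturbed disk $L_n$ is to leading order the flat Laplacian with the Robin condition $\partial_\nu - 1$ on the boundary, and one must solve a mixed problem with prescribed singular behaviour at the $n$ attachment points; on each half-catenoid bridge one must solve modulo the bounded kernel of the catenoid Jacobi operator; and in the genus-$1$ case a similar analysis is needed on the central neck. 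Working throughout in $\mathfrak S_n$-invariant weighted H\"older spaces kills most of the kernel of $L_n$ on the disk (vertical translations and most horizontal directions), and uniform invertibility modulo a low-dimensional obstruction space follows by patching the three local model operators together with matching estimates.

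The core difficulty, and the step I expect to be the main obstacle, is precisely this matching/balancing step. Each catenoidal bridge produces a logarithmic Green's-function contribution on the disk, and the finitely many free parameters at our disposal --- the spacing $2\varepsilon$ between the disks, the common neck size of the bridges, and in the genus-$1$ case the modulus of the central neck --- must be tuned so as to cancel the residual obstructions arising from the interaction of these logarithmic tails with the Robin condition at $S^2$ and, for $\tilde\Sigma_n$, with the central neck. Once these balancing conditions are solved, typically by an implicit function argument yielding an admissible choice $\varepsilon = \varepsilon(n)$ (together with a second parameter in the genus-$1$ case), a Banach fixed point applied to $u = L_n^{-1}(E_n - Q_n(u))$ produces an honest solution for all $n$ sufficiently large. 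Embeddedness and $\mathfrak S_n$-invariance are automatic from the equivariance of the construction and the smallness of $u$, while the two convergence statements follow because the perturbation scale tends to $0$ as $n \to \infty$, leaving a double horizontal disk, respectively a double punctured disk, in the limit.
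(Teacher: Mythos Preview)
Your proposal is correct and follows essentially the same strategy as the paper: build an $\mathfrak S_n$-invariant approximate surface by gluing two graphical copies of the disk via half-catenoidal bridges at the $n$-th roots of unity (plus a central catenoidal neck in the genus-$1$ case), do the linear analysis in symmetry-reduced weighted H\"older spaces with the Robin condition $\partial_r - 1$, match the logarithmic tails to fix the free parameters, and close by a contraction-mapping argument.

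Two small corrections worth noting. First, the paper's parametrization of $B^3$ (a foliation by spherical caps orthogonal to $S^2$, together with the conformal maps $\lambda_m$ from half-planes) is set up so that orthogonality to $S^2$ holds \emph{exactly} for the approximate surface and for every perturbation satisfying a simple Neumann condition; there is no boundary error term to estimate. Second, the scale is not $\varepsilon\sim 1/n$ but exponentially small, $\varepsilon\sim e^{-n/2}$: the balancing step is that the constant term of the multi-Green's function $G_n$ (which is $-n/2$ at leading order) must equal the constant $\log(\varepsilon/2)$ in the catenoid's asymptotic expansion, and this forces $\varepsilon$ in terms of $n$ precisely because constants are \emph{not} in the kernel of $L_{gr}$ (vertically translating the disk yields a CMC, not minimal, cap).
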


Even though we do not have a proof of this fact, it is very likely that (up to the action of an isometry of $\mathbb R^3$), the surfaces $\Sigma_n$ coincide with the surfaces already constructed by R. Schoen and A. Fraser. In contrast, the existence of  $\tilde \Sigma_n$ is new and does not follow from the results in \cite{Fra-Sch-2}. The parameterization of the free boundary minimal surfaces we construct is not explicit, nevertheless our construction being based on small perturbations of explicitly designed surfaces, it has the advantage to give a rather precise description of the surfaces $\Sigma_n$ and $\tilde \Sigma_n$. Naturally, the main drawback is that the existence of the free boundary minimal surfaces is only guaranteed when $n$, the number of boundary curves, is large enough. 

\section{Plan of the paper.}

In section 3, we study the mean curvature of surfaces embedded in $B^3$ which are graphs over the horizontal disk $ D^2 \times \{0\} $. In section 4 we analyse harmonic functions which are defined on the unit punctured disk in the Euclidean $2$-plane and have log type singularities at the punctures. In section 5 for every $n \in \mathbb N$ large enough we construct a family of genus 0 surfaces $\mathcal S_n$ and a family of genus 1 surfaces $\tS$ embedded in $B^3$ which are approximate solutions to the minimal surface equation, meet the unit sphere $S^2$ orthogonally and have $n$ boundary components. In section 6 we consider all embedded surfaces in $B^3$ which are close to $\mathcal S_n$ and $\tS$ and meet the sphere $S^2$ orthogonally. In section 7 we analyse the linearised mean curvature operator about $\mathcal S_n$ and $\tS$. Finally, in the last section we explain the Fixed-Point Theorem argument that allows us for $n$ large enough to deform $\mathcal S_n$ and $\tS$ into free boundary minimal surfaces 
$\Sigma_n$ and $\tilde \Sigma_n$ satisfying the theorem \eqref{MainTheorem}.

\section{The mean curvature operator for graphs in the unit $3$-ball}

We are interested in surfaces embedded in $B^3$ which are graphs over the horizontal disk $D^2 \times \{0\}$. To define these precisely, we introduce the following parametrization of the unit ball
\[
X (\psi,\phi, x_3) : =  \frac{1}{\cosh x_3 + \cos \psi} \, \left(  \sin \psi \, e^{i \phi} , \sinh x_3 \right),
\]
where $ \psi \in (0, \pi/2)$, $ \phi \in S^1 $ and $ x_3 \in \mathbb R $. The horizontal disk $D^2 \times \{0\}$ corresponds to $ x_3 = 0 $ in this parametrization and the unit sphere $S^2$ corresponds to $ \psi =\pi/2 $. Also, the leaf $x_3 = x_3^0$ is a constant mean curvature surface (in fact it is a spherical cap) with mean curvature given by
\[
H = 2 \, \sinh x_3^0 ,
\]
(we agree that the mean curvature is the {\em sum} of the principal curvatures, not the average) moreover, this leaf  meets $S^2$ orthogonally. 

\medskip

In these coordinates, the expression of the Euclidean metric is given by
\[
X^*g_{eucl} =  \frac{1}{(\cosh x_3 + \cos \psi)^2} \, \left(  d\psi^2 + (\sin \psi)^2 \, d\phi^2 + dx_3^2 \right).
\]

We consider the coordinate  
\[
z = \frac{\sin \psi}{1+ \cos \psi} \, e^{i\phi} ,
\]
which belongs to the unit disk $D^2 \subset \mathbb C$. We then define $ \mathcal X$ by the identity
\[
\mathcal X \left( z , x_3 \right) =  X (\psi, \phi, x_3),
\]
where $z$ and $(\psi,\phi)$ are related as above. Then
$$ 
\mathcal X(z,x_3) = A(z,x_3)(z, B(z) \, \sinh x_3) ,
$$
where the functions $B$ and $A$ and explicitly given by
\[
B(z) = \frac{1}{2} \left( 1 + |z|^2 \right), \quad A (z,x_3) := \frac{1}{1 + B(z)(\cosh x_3 - 1)}.
\]
In the coordinates $ z \in D^2 $ and $ x_3 \in \mathbb R $ the expression of the Euclidean metric is given by
\[
\mathcal X^* g_{eucl} =  A^2(z,x_3) \, \left(  dz^2 +  B^2(z) \, dx_3^2 \right).
\]

\medskip

In the next result, we compute the expression of the mean curvature of the graph of a function $z \mapsto u(z)$ in $B^3$, and by such a graph we mean a surface parametrized by 
$$ 
z \in D^2 \, \mapsto \, \mathcal X(z,u(z)) \in B^3. 
$$
We have the:
\begin{lemma}
\label{meancurvatureGraph}
The mean curvature with respect to the metric $\mathcal X^* g_{eucl}$ of the graph of the function $u$, namely the surface parametrized by $(z, u(z))$, is given by
\[
H(u) =  \frac{1}{A^3(u) \, B} \, \mbox{\rm div} \, \left( \frac{A^2(u) \, B^2 \, \nabla u}{\sqrt{1+ B^2\, |\nabla u|^2}} \right) + 2 \, \sqrt{1+ B^2\, |\nabla u|^2} \,  \sinh u ,
\]
where by definition $A (u)= A (\cdot , u )$. In this expression, the metric used to compute the gradient of u, the divergence and the norm of $\nabla u$ is the Euclidean metric on $D^2$. 
\end{lemma}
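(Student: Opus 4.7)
The plan is to derive the formula from the first variation of area in the metric $g := \mathcal X^* g_{eucl} = A^2(z,x_3)(dz^2 + B^2(z)\,dx_3^2)$. The only subtlety is that $A$ depends on both $z$ and $x_3$, so the conformal factor varies when we vary the graph vertically.

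First I would parametrize the graph by $F(z)=(z,u(z))$ and compute the induced metric $F^*g$. A direct calculation of the $2\times 2$ Gram matrix gives
$$
\det(F^*g) \;=\; A^4(z,u)\bigl(1 + B^2|\nabla u|^2\bigr) \;=\; A^4(u)\,W^2, \qquad W:=\sqrt{1+B^2|\nabla u|^2},
$$
so the area of the graph is $\int_{D^2} A^2(z,u(z))\,W\,dx_1dx_2$. Next I would compute the variation with respect to $u\mapsto u+tv$, where $v$ is compactly supported in $D^2$:
$$
\frac{d}{dt}\bigg|_{t=0}\mathrm{Area}_g \;=\; \int_{D^2}\Bigl(\partial_{x_3}(A^2)\,W\,v \,+\, A^2\,\frac{B^2\nabla u\cdot\nabla v}{W}\Bigr)dx_1dx_2,
$$
and then integrate by parts the second term to collect the coefficient of $v$.

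In parallel, I would compute the unit normal $N$ to the graph in the metric $g$. Solving the orthogonality conditions $g(N,F_{x_j})=0$ and normalizing with $g(N,N)=1$ yields
$$
N \;=\; \frac{1}{A(u)\,B\,W}\bigl(\partial_{x_3} - B^2\textstyle\sum_{j}u_{x_j}\partial_{x_j}\bigr),
$$
so the variation field $v\,\partial_{x_3}$ has normal component $g(v\partial_{x_3},N)=v\,A\,B/W$. Applying the first variation formula $\frac{d}{dt}\mathrm{Area}_g=-\int_\Sigma H\,g(\dot F,N)\,dA_g$, with $dA_g=A^2W\,dx_1dx_2$, produces $-\int_{D^2} H(u)\,v\,A^3 B\,dx_1dx_2$. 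Equating the two expressions for the first variation and using that $v$ is arbitrary, I would obtain
$$
H(u)\,A^3(u)\,B \;=\; \mathrm{div}\!\left(\frac{A^2(u)\,B^2\,\nabla u}{W}\right) \;-\; \partial_{x_3}(A^2)\big|_{x_3=u}\,W.
$$

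The final step is the algebraic identity that ties everything to the $\sinh u$ term: from $A(z,x_3)=\bigl(1+B(z)(\cosh x_3-1)\bigr)^{-1}$ one computes directly
$$
\partial_{x_3}(A^2) \;=\; -\,2\,A^3\,B\,\sinh x_3,
$$
so evaluating at $x_3=u$ and dividing by $A^3(u)\,B$ turns the second term into $+\,2\,W\,\sinh u$, yielding exactly the claimed formula. The only real obstacle is bookkeeping: keeping the evaluation $x_3=u$ straight versus the $z$-dependence in $A$ and $B$, and choosing the orientation of $N$ consistently so that the sign of the $\sinh u$ term comes out positive (corresponding to the convention that each leaf $x_3=x_3^0$ has mean curvature $2\sinh x_3^0$, which provides a useful consistency check by specializing to $u\equiv x_3^0$ constant).
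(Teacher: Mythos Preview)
Your proposal is correct and follows essentially the same route as the paper: compute the area element $A^2(u)\sqrt{1+B^2|\nabla u|^2}\,dx_1dx_2$, take the first variation in $u$, integrate by parts, compute the unit normal to identify $g(N,\partial_{x_3})\,da = A^3(u)B$, and use $\partial_{x_3}A = -A^2B\sinh x_3$ (equivalently your $\partial_{x_3}(A^2)=-2A^3B\sinh x_3$) to produce the $\sinh u$ term. The consistency check against the constant-mean-curvature leaves $u\equiv x_3^0$ is a nice addition not present in the paper.
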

\begin{proof}
The area form of the surface parametrized by $z = x_1 + i \, x_2 \mapsto (z, u(z))$ is given by
\[
da : =  A^2(u) \, \sqrt{1+ B^2\, |\nabla u|^2} \, dx_1 \, dx_2 ,
\]
and hence the area functional is given by
\[
\text{\rm Area} (u) : =  \iint_{D^2} A^2(u) \, \sqrt{1 + B^2\, | \nabla u |^2} \, dx_1 \, dx_2 .
\]
The differential of the area functional at $u$ is given by
\[
\left. D\text{\rm Area} \right|_{u} (v) =  \iint_{D^2} \left( - \frac{ A^2(u) \, B^2 \, \nabla u \cdot \nabla v }{ \sqrt{1+ B^2\, |\nabla u|^2} }  + 2 A(u) \, \partial_{x_3} A (u)\sqrt{ 1+ B^2 |\nabla u|^2 } \, v \right)  \, dx_1 \, dx_2 .
\] 
But
\[
\partial_{x_3} A =  - A^2 \, B \, \sinh x_3,
\]
and hence we conclude that
\[
\begin{array}{lllll}
\left. D \text{\rm Area} \right|_{u} (v) = \\[3mm]
\displaystyle   - \iint_{D^2} \left(  \text{\rm div} \left( \frac{A^2(u) \, B^2 \nabla u}{\sqrt{1+ B^2\, |\nabla u|^2}} \right) + 2 \, A^3 (u) \, B \, \sqrt{1+ B^2\, |\nabla u|^2} \, \sinh u \right) \, v \, dx_1 \, dx_2 .
\end{array}
\] 

To conclude, observe that the unit normal vector to the surface parametrized by $z \mapsto \mathcal X(z, u(z))$ is given by
\[
N : =  \frac{1}{A(u)} \, \frac{1}{\sqrt{1+B^2\, |\nabla u|^2}} \left(  - B \, \nabla u + \frac{1}{B} \, \partial_{x_3} \right),  
\]
and hence
\[
g_{eucl} (N, \partial_{x_3}) = \frac{A(u) \, B}{\sqrt{1 + B^2\, |\nabla u|^2}},
\]
so that 
\[
g_{eucl} (N, \partial_{x_3}) \, da = A^3(u) \, B, 
\]
and the result follows from the first variation of the area formula
\[
D\text{\rm Area}_{|u} (v) =  - \iint_{D^2}  H(u) \, g_{eucl} (N, \partial_{x_3}) \, v \, da.
\]
This completes the proof of the result.
\end{proof}

\medskip

Using the above Lemma, we obtain the expression of the linearised mean curvature operator about $u=0$. It reads 
\begin{equation}
L_{gr} \, v = \Delta(B v) = \Delta \left(\frac{1+|z|^{2}}{2} \, v  \right) ,
\label{eq:3.2}
\end{equation}
where $\Delta$ is the (flat) Laplacian on $D^2$.

\begin{lemma}
 Take a change of variables in $ D^2 :  z = r \, e^{i \phi}, \ r \in (0,1), \, \phi \in S^1 $ and a function $ u \in \mathcal C^1(D^2) $, such that $ 
 \left. \frac{\partial u}{\partial r} \right|_{r = 1} = 0 $. Then the graph of $u$ in $B^3$ meets the sphere $ S^2 $ orthogonally at the boundary.
\end{lemma}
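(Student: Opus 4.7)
The plan is to work in the $(\psi,\phi)$ coordinates attached to the parametrization $X$ and exploit the fact, recalled earlier in the paper, that the leaves $\{x_3 = \mathrm{const}\}$ are spherical caps meeting $S^2$ orthogonally. Since $z = \tan(\psi/2)\, e^{i\phi}$, one has $r = |z| = \tan(\psi/2)$, so $r=1$ corresponds to $\psi = \pi/2$, and the chain rule gives $\partial_r u = \frac{2}{1+r^2}\,\partial_\psi u$; hence the hypothesis $\partial_r u|_{r=1} = 0$ is equivalent to $\partial_\psi u|_{\psi = \pi/2} = 0$.

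Re-parametrizing the graph as $(\psi,\phi) \mapsto X(\psi,\phi, u(\psi,\phi))$, its tangent plane is spanned by
$T_\psi = \partial_\psi X + (\partial_\psi u)\,\partial_{x_3} X$ and $T_\phi = \partial_\phi X + (\partial_\phi u)\,\partial_{x_3} X$.
At a boundary point, corresponding to $\psi = \pi/2$, the boundary condition reduces $T_\psi$ to $\partial_\psi X|_{\psi=\pi/2}$. The key observation is that this vector is normal to $S^2$: indeed, the leaf $\{x_3 = u(e^{i\phi})\}$ meets $S^2$ orthogonally at the point in question, so its tangent plane there, spanned by $\partial_\psi X$ and $\partial_\phi X$ at $\psi=\pi/2$, must contain the outward $S^2$-normal; since $\partial_\phi X|_{\psi = \pi/2}$ is azimuthal and hence tangent to $S^2$, the normal direction is carried by $\partial_\psi X|_{\psi=\pi/2}$. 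A direct computation confirms this, yielding $\partial_\psi X|_{\psi = \pi/2} = (\cosh x_3)^{-1}\, X(\pi/2,\phi,x_3)$, which is manifestly proportional to the position vector, hence to the outward normal of $S^2$.

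Combining these points, the tangent plane of the graph at a boundary point contains a vector normal to $S^2$; equivalently, the unit normal of the graph is tangent to $S^2$, which is the desired orthogonality condition. There is no genuine obstacle in the argument: it is essentially built into the choice of coordinate system, and the only mild care required is converting the boundary derivative between the radial variable $r$ on $D^2$ and the intrinsic angle $\psi$ in which the geometry of $S^2$ is transparent.
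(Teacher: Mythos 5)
Your proposal is correct and follows essentially the same approach as the paper: the decisive step in both is the computation showing that the boundary tangent vector $\partial_\psi X|_{\psi=\pi/2}$ (equivalently $\partial_r\mathcal X|_{r=1}$, since $dr/d\psi=1$ there), once the hypothesis kills the $\partial_{x_3}$ contribution, is a scalar multiple of the position vector and hence of the outward normal to $S^2$. The passage to $(\psi,\phi)$ coordinates and the preliminary discussion of spherical caps are cosmetic variations on the paper's direct computation in the variable $r$.
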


\begin{proof}
 The surface parametrized by $ (r,\phi) \, \mapsto \, \mathcal X(r \, e^{i \phi}, u(r,\phi)) $ is embedded in $B^3$ and meets $\partial B^3$ at $r 
 = 1 $. The result follows from the fact that the tangent vector 
 $$ 
 T_r (1) = \left. \partial_r \, \mathcal X(r \, e^{i \phi}, u(r,\phi)) \right|_{r=1} = \dfrac{1}{\cosh^2 u(1,\phi)} \left( e^{i\phi}, 
 \, \sinh u(1,\phi) \right) ,
 $$
 is collinear to the normal vector 
 $$ 
 N_S = \dfrac{1}{\cosh u(1,\phi)} \left( e^{i \phi}, \sinh u(1,\phi) \right) ,
 $$ 
 to the sphere $S^2$ at the point $ \mathcal X(e^{i \phi}, u(1,\phi)) $.
\end{proof}

\section{Harmonic functions with singularities defined on the unit disk}

Take some number $ n \in \mathbb N $. Our goal is to construct a graph in $B^3$ which has bounded mean curvature, is invariant under the transformation $z \mapsto \bar z$ and the rotations by $\frac{2\pi}{n}$ and is close to a half-catenoid in small neighbourhoods of the $n$-th roots of unity 
$$ 
z_m = e^{\frac{2\pi i m}{n}} \in \partial D^2, \ m = 1, \ldots n ,
$$
(and, in the case of the second construction, to a catenoid in a small neighbourhood of $z=0$). 

\smallskip

The parametrisation of a standard catenoid $C$ in $\mathbb R^3$ is
$$ 
X^{cat}(s, \theta) = \left( \cosh s \, e^{i \phi}, s \right), \quad (s,\phi) \in \mathbb R \times S^1. 
$$ 
It may be divided into two pieces $C^\pm$, which can be parametrized by
$$ 
z \in \mathbb C \setminus D^2 \, \mapsto \left( z, \, \pm \log |z| \mp \log 2 + \mathcal O(|z|^{-2}) \right), \quad \mbox{as} \quad |z| \rightarrow \infty .
$$

We would like to find a function $ \Gamma_n $, which satisfies 
\begin{equation}  \label{Green1}
 \left\{ \begin{array}{l} L_{gr} \, \Gamma_n = 0 \quad \text{in} \quad D^2 \ (D^2 \setminus \{0\}) \\[3mm] 
 \partial_r \Gamma_n = 0 \quad \text{on} \quad \partial D^2 \setminus \{ z_1, \ldots, z_n \} \end{array} \right. ,
\end{equation}
and which has logarithmic singularities at $z=z_m$ (and $z=0$). Notice that the operator $L_{gr}$ in the unit disk with Neumann boundary data has a kernel which consists of the coordinate functions $x_1, x_2$. This corresponds to tilting the unit disk $ D^2 \times \{0\} $ in $B^3$. The kernel can be eliminated by asking $\Gamma_n$ to be invariant under the action of a group of rotations around the vertical axis.

\smallskip

Notice also that the constant functions are not in the kernel of $L_{gr}$~: by moving the disk in the vertical direction in the cylinder $ D^2 \times \mathbb R $ we do not get a minimal but a constant mean curvature surface in $B^3$. 

\medskip

Take a function $G_n$, such that $ G_n(z^n) = B(z) \, \Gamma_n(z) $. Then the problem \eqref{Green1} is equivalent to
\begin{equation} \label{Green2} 
 \left\{ \begin{array}{l} \Delta G_n = 0  
 \quad \text{in} \quad D^2 \ (D^2 \setminus\{ 0 \}) \\[3mm] 
 \partial_r G_n - \frac{1}{n} G_n =  0 \quad \text{on} \ \partial D^2 \setminus \{1\} .
 \end{array} \right.
\end{equation}

\smallskip

We construct $G_n$ explicitly. For all integer $ n \geq 2 $, we put
\begin{equation}
G_n (z) :=  -\frac{n}{2} + \mathrm{Re} \left( \sum_{j=1}^\infty \frac{n \, z^j}{n j - 1} \right).  
\label{eq:3.2}
\end{equation}
Writing
\[
\frac{1}{n j - 1} =  \sum_{k=0}^\infty \frac{1}{(nj)^{k+1}},
\]
we see that also have the expression
\begin{equation}
G_n (z) : =  -\frac{n}{2} +  \mathrm{Re} \left( \sum_{k=0}^\infty \frac{H_k(z)}{n^k}  \right) ,
\label{eq:3.3}
\end{equation}
where, for all $k\in {\bf N}$, the function $H_k$ is given by
\begin{equation}
H_k (z) : = \sum_{j=1}^\infty \frac{z^j}{j^{k+1}}.
\label{eq:3.4}
\end{equation}
Observe, and this will be useful, that 
\begin{equation}
H_0(z) = - \ln (1-z).
\label{eq:3.5}
\end{equation}

Obviously, $G_n$ is harmonic in the open unit disk. Making use of (\ref{eq:3.4}), we see that, for all $k\geq 1$,  
\[
\partial_r \left( \mathrm{Re} \, H_k \right) = \mathrm{Re} \, H_{k-1},
\]
on $\partial D^2$, while it follows from (\ref{eq:3.5}) that 
\[
\partial_r \left(  \mathrm{Re} \, H_0 \right) = \frac{1}{2} ,
\]
again on $\partial D^2-\{1\}$. Therefore, we conclude from (\ref{eq:3.3}) that 
\[
n\, \partial_r G_n - G_n =0 ,
\]
on $\partial D^2$. 

\medskip

For all integer $n\geq 1$, we define in $D^2 -\{0\}$, the function $\tilde G_n$ by
\begin{equation}
\tilde G_n (z ) : =  - n - \log |z| .
\label{eq:3.1}
\end{equation}
Again $\tilde G_n$ is harmonic in $D^2-\{0\}$ and we also have
\[
n\, \partial_r \tilde G_n - \tilde G_n =0,
\]
on $\partial D^2$. 

\medskip

To complete this paragraph, we define 
\[
\Gamma_n (z) : =  \frac{1}{B(z)} \, G_n(z^n) \qquad \text{and} \qquad \tilde \Gamma_n (z) : =  \frac{1}{B(z)} \, \tilde G_n(z^n).
\]
By construction, $ L_{gr} \, \Gamma_n =  0 $ in $D^2$ and  $ \partial_r \Gamma_n =0 $ on $ \partial D^2 $, away from the $n$-th roots of unity ; while $ L_{gr} \, \tilde \Gamma_n  =0 $ in $ D^2-\{0\} $ and $ \partial_r \tilde \Gamma_n =0 $ on $ \partial D^2 $.

\subsection{Matching Green's function}

Take two parameters $ 0 < \e < 1 $ and $0 < \te < 1$ and consider a catenoid $C_{\te}$ in $\mathbb R^3$, parametrized by
$$ 
X^{cat}_{\te}: \, (s,\phi) \in \mathbb R \times S^1 \, \mapsto \, \left( \te \cosh s \, e^{i \phi}, \te s \right) ,
$$
and $n$ half-catenoids $C_{\e,m}$, $m=1,\ldots,n$
$$ 
X^{cat}_{\e,m} : \, (s,\theta) \in \mathbb R \times \left[ \frac{\pi}{2}, \frac{3\pi}{2} \right] \mapsto \left( \e \cosh s \, e^{i \theta} + z_m, \e s \right) , 
$$
centered at the $n$-th roots of unity $z_m$. In a neighbourhood of $ z = 0 $ ($ z=z_m $), we can take a change of variables
\begin{multline*} 
 z = \te \cosh s \, e^{i\phi}, \ \ ( z = z_m + \e \cosh s \, e^{i \theta} ), \quad \phi \in S^1, \ \ ( \theta \in \left[ -\theta_\e + 2\pi m /n, \theta_\e + 2 \pi m/n \right] ) \\[3mm] 
 s \in [-s_{\te}, 0] \quad \mbox{and} \quad s \in [0,s_{\te}], \ \ ( s \in [-s_{\e}, 0] \quad \mbox{and} \quad s \in [0,s_{\e}] ),
\end{multline*} 
for certain parameters $s_{\te}$, $s_\e \in (0, + \infty)$ and $\theta_\e \in (0,\pi/2)$. We can parametrize the lower and the upper parts of $C_{\te}$ and $C_{\e,m}$ as graphs
$$ 
z \, \mapsto \, \left( z, \pm \, G_{\te}^{cat} \right), \quad z \, \mapsto \, \left( z, \pm \, G_{\e,m}^{cat} \right) ,
$$
where in some small neighbourhoods of $z = 0$ ($z=z_m$),
\begin{multline*}
 G_{\te}^{cat}(z) = \te \log \frac{\te}{2} - \te \log |z| + \mathcal O \left( {\te}^3 / |z|^2 \right), \\  
 G_{\e,m}^{cat}(z) =  \e \log \frac{\e}{2} - \e \log |z - z_m| +  \mathcal O \left( \e^3 / |z-z_m|^2 \right) 
\end{multline*}
Our goal is to find positive parameters $\tau$ and $\tilde \tau$ and a connection between $\e$ and $\te$, such that the function 
$$ 
z \, \mapsto \, \tau \, G_n(z^n) \, + \, \tilde \tau \, \tilde G_n(z^n) ,
$$ 
would be close to $G_{\te}^{cat}$ in a neighbourhood of $z=0$ and to $G_{\e,m}^{cat}$ in a neighbourhood of $z=z_m$. We denote
$$ 
f_n(z):=\sum_{k=0}^\infty \frac{H_k(z^n)}{n^k} = \sum_{k=0}^\infty \frac{1}{n^k} \sum_{j=1}^\infty \frac{z^{nj}}{j^{k+1}} ,
$$
and remind that $ G_n(z^n) = - \frac{n}{2} + \mathrm{Re} \, f_n(z) $. It is easy to verify the function $f_n(z)$ satisfies
$$ 
\dfrac{\partial f_n}{\partial z}(z) = - \frac{d}{dz} \log(1-z^n) + \frac{1}{z} f_n(z) ,
$$
which yields
$$ 
\dfrac{d}{dz} \left( \dfrac{f_n}{z} \right) = \dfrac{nz^{n-2}}{z^n - 1} .
$$
We can write
\begin{multline*}
 \dfrac{ nz^{n-2}}{z^n-1} -  \dfrac{1}{z_m(z-z_m)} = \dfrac{ n z^{n-2} z_m - \sum\limits_{k=0}^{n-1} z^{n-1-k} z_m^k }{ z_m(z^n - 1) } = \\[3mm] 
 \dfrac{ \sum\limits_{k=0}^{n-1} \left( - z^{n-1-k} z_m^k + z_m z^{n-2} \right) }{ z_m (z^n - 1) } = 
 \dfrac{ z^{n-2}( z_m - z ) + z_m \sum\limits_{k=2}^{n-1} z^{n-1-k} \left( z^{k-1} - z_m^{k-1} \right) }{ z_m(z-z_m) \sum\limits_{k=0}^{n-1} z^{n-1-k}z_m^k }  
 = \\[3mm]
 \dfrac{ - z^{n-2} + z_m \sum\limits_{k=2}^{n-1} z^{n-1-k} \sum\limits_{l=0}^{k-2} z^{k-2-l} z_m^l }{ z_m \sum\limits_{k=0}^{n-1} z^{n-1-k}z_m^k } := h_n(z).
\end{multline*}
The function $h_n(z)$ is continuous in a small neighbourhood of $ z = z_m $ and 
$$ 
| h_n(z_m) | \leq c \, n ,
$$
for a constant $c$ which does not depend on $n$. So, we have
$$ 
\dfrac{d}{dz} \left( \dfrac{f_n}{z} \right) + \frac{1}{z_m(z-z_m)} = h_n(z) ,
$$
which yields that in a neighbourhood of $ z = z_m $
$$ 
\dfrac{f_n(z)}{z} + \dfrac{1}{z_m} \, \log( z - z_m ) = \dfrac{1}{z_m} \, \underset{z \rightarrow z_m}{\lim} \left( f_n(z) + \log(z-z_m) \right) + \int_{z}^{z_m} h_n(z)dz ,
$$
where the integral is taken along the segment of the straight line passing from $z$ to $z_m$ and by $\log$ we mean the principal value of complex logarithm defined in the unit disc deprived of a segment of a straight line which doesn't pass thought any of the $n$-th roots of unity. We have
$$ 
\sum_{k=1}^\infty \dfrac{1}{n^k} H_k(z_m^n) = \sum_{k=1}^\infty \sum_{j=1}^\infty \frac{1}{ n^k \, j^{k+1}} \leq \dfrac{\pi^2}{6n} .
$$
Moreover,
$$ 
\mathrm{Re} \underset{ z \rightarrow z_m }{\lim} \left( - \log(1 - z^n) + \log(z-z_m) \right) = - \log | n \, z_m^{n-1} | = - \log n .
$$

So, in the neighbourhood of $ z = z_m $, we have
$$ 
G_n(z^n) = - \frac{n}{2} + c(n) + \log | z-z_m | + \mathcal O(|z-z_m| \log |z-z_m|) + \mathcal O(n|z-z_m|) , 
$$
where $ |c(n)| \leq c \, \log n $ for a constant $c$ which does not depend on $n$ and
$$ 
\tau \, G_n(z^n) \, + \, \tilde \tau \, \tilde G_n(z^n) = \left\{ \begin{array}{l} - n \left( \tilde \tau + \tau/2 \right) - \tilde \tau n \log |z| + \mathcal O(\tau |z|^n), \ \text{as} \ |z| \rightarrow 0 \\[3mm]
-n \left( \tilde \tau + \tau/2 \right) + \tau \, c(n) - \tau \log |z - z_m| \, + \\[3mm]
\mathcal O(\tau |z-z_m| \log |z-z_m|) + \mathcal O( \tau n|z-z_m| ), \ \text{as} \ |z-z_m| \rightarrow 0 \end{array} \right. .
$$
\smallskip
We should take $ \tau  = \e $ and $ n \tilde \tau  = \te $. Moreover, we should have
$$ 
- \te - \dfrac{\e n}{2} = \te \log \dfrac{\te}{2} \quad \mbox{and} \quad - \te - \dfrac{\e n}{2} + \e \, c(n) = \e \log \dfrac{\e}{2} .
$$
This gives us the relation
$$ 
\log \dfrac{\e}{\te} + \dfrac{\te}{\e} - \dfrac{n}{2} \, \dfrac{\e}{\te} = - \dfrac{n}{2} + c(n) + 1 ,
$$
and 
$$ 
\dfrac{\e}{\te} = g_n^{-1}( - \dfrac{n}{2} + c(n) + 1 ) = : d(n) ,
$$
where 
$$ 
g_n(t) \, : \, t \in (0, + \infty) \, \mapsto \, \log t - \dfrac{n}{2} \, t + \dfrac{1}{t} \in (- \infty, \infty) ,
$$
is an everywhere decreasing function. Finally, we find
$$ 
\te = 2 \, e^{ -1 - \frac{n}{2} \, d(n) } \quad \mbox{and} \quad \e = d(n) \, \te .
$$
(In the case, where we do not have a singularity at $z=0$ we just need to take $ \e = e^{ - \frac{n}{2} + c(n)} $). We obtain for all $\beta \in (0,1)$
\begin{multline} \label{functionG}
 \tau \, G_n(z^n) = \e \log \dfrac{ \e }{ 2|z-z_m| } + \mathcal O(\e^{1 - \beta} \, |z-z_m|), \ \text{as} \ |z-z_m| \rightarrow 0 \\[3mm]
 \tau \, G_n(z^n) \, + \, \tilde \tau \, \tilde G_n(z) = \left\{ \begin{array}{l} \te \log \dfrac{\te}{2|z|} + \mathcal O( \te^{1-\beta} \, |z|^n), \ 
 \text{as} \ |z| \rightarrow 0 \\[3mm]
 \e \log \dfrac{\e}{2|z-z_m|} + \mathcal O(\e^{1-\beta} \, |z-z_m|), \quad \text{as} \ |z-z_m| \rightarrow 0 \end{array} \right.
\end{multline}
Finally, we put $ \, \mathcal G_n(z) = \tau \, G_n(z^n) / B \, $ and $ \, \tilde { \mathcal G }_n(z) = \left( \tilde \tau \, \tilde G_n(z^n) + \tau \, G_n(z^n) \right) / B \, $ and
$$ 
\tilde{\mathcal G}_n(z) = \left\{ \begin{array}{l} 2 \, \te \log \dfrac{\te}{2|z|} + \mathcal O( \te^{1-\beta} \, |z|^2), \ \text{as} \ |z| \rightarrow 0 \\[3mm]
\e \log \dfrac{\e}{2|z-z_m|} + \mathcal O(\e^{1-\beta} \, |z-z_m|), \ \text{as} \ |z-z_m| \rightarrow 0 \end{array} \right. 
$$

\begin{remark}
We can now explain why our construction works only for large $n$. On one hand, in order to match the graph of the Green's function $ \tilde{\mathcal G}_n$ with catenoids we need to truncate the cantenoids far enough and scale them by a small enough factor. On the other hand, in the neighbourhood of singularities the constant term of $ \tilde{\mathcal G}_n$ depends on the number of singularities $n$ and, as constant functions are not in the kernel of the linearised mean curvature operator, this gives the correspondence between the scaling factors of the catenoids and $n$.
\end{remark}

\section{Catenoidal bridges and necks}

In this section we explain the construction of the surface $\mathcal S_n$, invariant under the action of the group $\mathfrak S_n$, which has bounded mean curvature, meets the unit sphere $S^2$ orthogonally at the boundary and is close to two horizontal disks "glued together" with the help of "catenoidal bridges" in the neighbourhood the $n$-th roots of unity. We will also denote $\tilde {\mathcal S_n}$ the genus 1 surface, obtained from $\mathcal S_n$ by attaching a "catenoidal neck" at $z=0$. We will explain now what we mean by "catenoidal bridges" and "catenoidal neck".

\subsection{Catenoidal bridges}

One of the possible constructions would be to "glue" the graph $ \mathcal X(z, \mathcal G_n(z)) $ together with minimal stripes obtained by intersecting euclidean catenoids centered at $ z=z_m $ with the unit sphere. The difficulty of this approach is that those stripes would not meet the sphere orthogonally. We prefer to find a way to put half-catenoids in the unit sphere in the orthogonal way loosing the minimality condition. 

\medskip

\begin{remark}
We describe below the construction of the surface $\tS$. The construction of the surface $ \mathcal S_n $ can be obtained by replacing the function $\tilde{\mathcal G}_n$ by $ \mathcal G_n $ which has the same expansion in the terms of $\e$ at $z = z_m$, taking into account that the relation between the parameters $n$ and $\e$ changes. 
\end{remark}

We use the notation $ \mathbb C_- $ for the half-plane $ \{ z \in \mathbb C \, | \, \mathrm{Re}(z) < 0 \} $. For $ m = 1, \ldots, n $ consider the conformal mappings 
$$
\lambda_m : \mathbb C_- \longrightarrow D^2, \quad \lambda_m(\zeta) = e^{\frac{2 i \pi m}{n}} \frac{1+\zeta}{1-\zeta}. 
$$
These mappings transform a half-disk in the $ \mathbb C_- $ centered at $ \zeta = 0 $ and of radius $\rho < 1$ to a domain obtained by the intersection of the unit disk $D^2$ with a disk of radius $\frac{2\rho}{1-\rho^2}$ and a center at $\frac{1+\rho^2}{1-\rho^2} e^{\frac{2 i \pi m}{n}}$. Let $ ( \zeta = \xi_1 + i \xi_2, \xi_3) $ be the coordinates in $\mathbb C_- \times \mathbb R$, then we define the mapping 
$$ 
\Lambda_m : \mathbb C_- \times \mathbb R \longrightarrow D^2 \times \mathbb R, \quad \Lambda_m(\zeta, \xi_3) = \left( \lambda_m(\zeta), 2 \, \xi_3 \right). 
$$

\medskip

Consider the half-catenoid $C_{\e/2}$ in $ \mathbb C_- \times \mathbb R $, parametrized by
$$ 
X^{cat}_{\e/2} : \, (\sigma, \theta) \in \mathbb R \times \left[ \frac{\pi}{2}, \frac{3\pi}{2} \right]  \, \mapsto \, \left( \frac{\e}{2} \cosh \sigma \, e^{i\theta}, \frac{\e}{2} \sigma \right) $$
In the regions, where $ \sigma > 0 $ or $ \sigma < 0 $ we can take the change of variables 
$$ \zeta =  \frac{\rho}{2} \, e^{i \theta} = \frac{\e}{2} \cosh \sigma \, e^{i \theta}, \quad \theta \in \left[ \frac{\pi}{2}, \frac{3\pi}{2} \right] $$ 
and, having in mind that the function $ \tilde{\mathcal G}_n $ defined in $\bar D^2 \setminus \{ z_1, \ldots, z_n \}$ is invariant under rotations by the angle $\frac{2\pi}{n}$, consider a vertical graph over $\mathbb C_-$:
$$ (\rho, \theta) \, \mapsto \, \left( \frac{\rho}{2} \, e^{i \theta}, \frac{1}{2} \, \bar{ \mathcal G}_n (\rho,\theta) \right), \ \text{where} \ \ \bar{ \mathcal G }_n(\rho,\theta)  = \tilde{\mathcal G}_n(\lambda_m(\rho/2 \, e^{i \theta})) $$

In the neighbourhood of $ \rho = 0 $, we have $ | \lambda_m(\rho/2 \, e^{i \theta}) - z_m | = \rho + \mathcal O(\rho^2) $. So, using the expansion \eqref{functionG} for the function $ \tilde{ \mathcal G}_n $ in the neighbourhood of $z = z_m$, we obtain a similar expansion for $ \bar {\mathcal G_n} $ in the neighbourhood of $0$:
$$ 
\bar{ \mathcal G }_n(\rho,\theta) = \e \log \frac{\e}{2 \rho} + \mathcal O(\e^{1-\beta} \rho ), \quad \forall \beta \in (0,1).  
$$
At the same time the lower and the upper part of the $C_{\e/2}$ can be seen as graphs of the functions 
$$ 
\pm \, G^{cat}_{\e/2}(\rho) = \pm \, \frac{\e}{2} \log \frac{\e}{2 \rho} + \mathcal O(\e^3/\rho^2). 
$$ 
Now take a function $ \bar \Upsilon $ which is defined in the neighbourhood of $ |\zeta| = \e $ by
$$ 
\bar \Upsilon(\rho,\theta) = (1 - \bar  \eta_\e(\rho)) \, \frac{1}{2} \, \bar{ \mathcal G }_n(\rho,\theta)  + \bar \eta_\e(\rho) \, G_{\e/2}^{cat}(\rho) ,
$$
where $\bar \eta_\e$ is a cut-off function, such that
$$ 
\bar \eta_\e \equiv 1, \quad \mbox{for} \quad \e < \rho < 1/2 \, \e^{2/3}, \quad \bar \eta_\e \equiv 0, \quad \mbox{for} \quad  \rho > \e^{2/3} .
$$

Using this, we can parametrize the surface $ \mathcal S_n $ in the region 
$$ 
\Omega_{cat}^m: = \lambda_m \left\{ \e / 2 \, \cosh \sigma \, e^{i\theta} \, : \, \e  \cosh \sigma < 1/2 \, \e^{2/3}, \ \theta \in \left[ \pi/2, 3\pi/2 \right]  \right\} ,
$$ 
by $ (\sigma,\theta) \mapsto \, \mathcal X \circ \Lambda_m \left( \e / 2 \, \cosh \sigma \, e^{i\theta}, \frac{\e}{2} \sigma \right) $ 
and as a bi-graph
\begin{multline*} 
 \mathcal X \left\{ ( z, 2 \,\Upsilon^m(z) ) \cup (z, - 2 \, \Upsilon^m(z)) \right\}, \\
 \text{for} \ z \in \Omega_{glu}^m : = \lambda_m \left\{ \rho /2 \, e^{i \theta} \, : \, 1/2 \, \e^{2/3} < \rho < 2 \, \e^{2/3}, \ \theta \in \left[  \pi/2, 3\pi/2 \right] \right\}  ,
\end{multline*}
where $\Upsilon^m(z)$ is a function, such that $ \Upsilon^m(\lambda_m(\rho/2 \, e^{i\theta})) = \bar \Upsilon(\rho,\theta) $.

\medskip

\begin{remark}{\textbf{Orthogonality at the boundary}}
\end{remark}
In a neighbourhood of its $m$-th component of the boundary the surface $\tS$ ($\mathcal S_n$) can be seen as the image by the mapping $ \mathcal X \circ \Lambda_m$ of a surface (which we denote $ \bar S_n$) contained in the half-space $\mathbb C_- \times \mathbb R$. Consider a foliation of the half-space by horizontal half-planes. It is clear that every leaf of this foliation is orthogonal to $\partial \mathbb C_- \times \mathbb R$. Thus, the normal to $\partial \mathbb C_- \times \mathbb R$ at a point is tangent to the horizontal leaf passing through this point. So, if there existed a tangent vector field along $ \bar S_n$, horizontal at $\partial \bar S_n$, then it would have to be collinear to the normal to $\partial \mathbb C_- \times \mathbb R$. 

\medskip

On the other hand, the image of the the foliation by horizontal half-planes by the mapping $\mathcal X \circ \Lambda_m$ gives a foliation of the unit ball by spherical caps which are orthogonal to $S^2$ at the boundary. The horizontal vector field tangent to $ \partial \mathbb C_- \times \mathbb R $ is sent by this mapping to a vector field tangent to the sphere and to a spherical cap leaf. The result follows from the fact that the restriction of $\mathcal X \circ \Lambda_m$ to horizontal half-planes is conformal.

\medskip

Finally in our case, the existence of the horizontal tangent vector field follows from the fact that $\partial_\theta X^{cat}_{\e/2}$ is horizontal and that
$ \partial_\theta \bar{\mathcal G}_n = \partial_\theta G_{\e/2}^{cat} = \partial_\theta \bar \eta_\e = 0$ at $\theta \in \{ \pi/2, 3 \pi /2 \}$.

\medskip

Let $\mathcal H$ denote the mean curvature of the surface $\tS$.

\begin{proposition} \label{cat0}
 There exists a constant $c$ which does not depend on $\e$ such that in the region 
 $$ 
 \Omega_{cat}^m = \lambda_m \left\{ \e/2 \, \cosh \sigma \, e^{i\theta} \, : \, \e \, \cosh \sigma < 1/2 \, \e^{2/3}, \ \theta \in \left[ \pi/2, 3\pi/2 \right] 
 \right\}, 
 $$ 
 we have
 $$ 
 \left| \mathcal H( \lambda_m ( \e/2 \, \cosh \sigma \, e^{ i \theta}) \right| \leq \frac{c}{\cosh \sigma} 
 $$
 \begin{proof}
  The proof consists of calculating the mean curvature of $ C_{\e/2} $ with respect to the ambient metric 
  \begin{align*} 
   \mathcal (\mathcal X \circ \Lambda_m)^* g_{eucl}(\zeta,\xi_3) & = 
   A^2(\Lambda_m(\zeta,\xi_3)) \left( d\zeta^2 + B^2(\Lambda_m(\zeta, \xi_3)) d \xi_3^2 \right) \\
   & = \frac{4}{ \left[|1-\zeta|^2 + (1 + |\zeta|^2)(\cosh( 2 \xi_3 )-1) 
   \right]^2} \left( d \zeta^2 + (1 + |\zeta|^2)^2 \, d \xi_3^2 \right) \\ \\
   & = a^2(\zeta,\xi_3) \left( d \zeta^2 + b^2(\zeta) \, d \xi_3^2  \right) = : g_m(\zeta,\xi_3)
  \end{align*}
 where $a (\zeta,\xi_3) = \dfrac{2}{|1-\zeta|^2 + (1 + |\zeta|^2)(\cosh(2 \xi_3)-1)}$ and $b(\zeta) = 1 + |\zeta|^2$. 
  
  \medskip
  
Let $\nabla^\e$ denote the Levi-Civita connection corresponding to this metric. We have the following estimates for the Christoffel symbols in a neighborhood of $(\zeta,\xi_3) = (0,0)$:
  $$ \begin{array}{l} \Gamma^1_{11} = - \Gamma^1_{22} = \Gamma^2_{12} = \frac{1}{a} \frac{\partial a}{\partial \xi_1} =  \mathcal O(1), \quad 
  \Gamma^2_{11} = - \Gamma^2_{22} = - \Gamma^1_{12} =  \frac{1}{a} \frac{\partial a}{\partial \xi_2} = \mathcal O(1) \\ \\
  \Gamma^1_{13} = \Gamma^2_{23} = \Gamma^3_{33} = \frac{1}{a} \frac{\partial a}{\partial \xi_3} = \mathcal O(\xi_3), \quad  
  \Gamma^3_{11} = \Gamma^3_{22} = - \frac{1}{ab^2} \frac{\partial a}{\partial \xi_3} = \mathcal O(\xi_3),
  \\ \\
  \Gamma^1_{33} = - (\frac{b^2}{a} \frac{\partial a}{\partial \xi_1} + b \frac{\partial b}{\partial \xi_1}) = \mathcal O(1), \quad
  \Gamma^2_{33} = - (\frac{b^2}{a} \frac{\partial a}{\partial \xi_2} + b \frac{\partial b}{\partial \xi_2}) = \mathcal O(1), \\ \\
  \Gamma^3_{13} = \frac{1}{a} \frac{\partial a}{\partial \xi_1} + \frac{1}{b} \frac{\partial b}{\partial \xi_1} = \mathcal O(1), \quad
  \Gamma^3_{23} = \frac{1}{a} \frac{\partial a}{\partial \xi_2} + \frac{1}{b} \frac{\partial b}{\partial \xi_2} = \mathcal O(1) \\ \\
  \Gamma^1_{23} = \Gamma^2_{13} = \Gamma^3_{12} = 0 \end{array} $$
  
  \medskip
  
Using $ |\zeta| = \e/2 \cosh \sigma, \ \xi_3 = \e/2 \, \sigma, $ and
$$ 
\nabla^\e_{\partial_p} \partial_q = \partial_p \, \partial_q \, X^{cat}_{\e/2} + \left[ \partial_p X^{cat}_{\e/2} \right]^i \left[ \partial_q  X^{cat}_{\e/2} \right]^j \Gamma^k_{ij} \, \partial_k, 
$$
where $ \partial_p $ and $ \partial_q $ stand for $ \partial_\sigma $ or $ \partial_\theta $ and $\partial_k = \partial_{\xi_k}, \, k=1,2,3$. We get
\begin{multline*}
\left| \left[ \nabla^\e_{\partial_p} \partial_q - \partial_p \, \partial_q \, X^{cat}_{\e/2} \right]^i (\sigma,\theta)  \right| \leq c \, \e^2 \cosh^2 \sigma, 
\quad i = 1,2 \\
\left| \left[ \nabla^\e_{\partial_p} \partial_q - \partial_p \, \partial_q \, X^{cat}_{\e/2} \right]^3 (\sigma,\theta) \right| \leq c \, \e^2 \cosh \sigma .
\end{multline*}
 
The unit normal to $C_{\e/2}$ with respect to the metric $g_m$ is 
$$ 
\mathcal N(\sigma,\theta) = \frac{1}{a \sqrt{\frac{b^2}{\cosh^2 \sigma} + \tanh^2 \sigma }} \left( - \frac{b}{\cosh \sigma} \, e^{i\theta}, \frac{1}{b} \tanh \sigma \right) .
$$
Using the the expansions for $a$ and $b$ in the neighbourhood of $ 0 $ and fact that the third coordinate of the vector $ \partial_p \, \partial_q \,  X^{cat}_{\e/2} $ is zero for all $p$ and $q$ we get the following expression for the second fundamental form :
$$ 
\mathfrak h_{\e}(\sigma,\theta) = \e(d\sigma^2 - d\phi^2) + h_{\e}(\sigma,\theta), 
$$
where $\left| \left( h_{\e} \right)_{pq}(\sigma,\theta) \right| \leq c \, \e^2 \cosh \sigma. $ On the other hand, we can write the expansion of the metric induced on $ C_{\e/2} $ from $g_m$ 
$$ 
\mathfrak g_{\e}(\sigma,\theta) = \e^2 \cosh^2 \sigma (d\sigma^2 + d\phi^2) + g_{\e}(\sigma,\theta), 
$$
where $ \left| \left( g_{\e} \right)_{pq}(\sigma,\theta) \right| \leq c \e^3 \cosh^3 \sigma$. 
  
\medskip  
  
Finally, $ \left| \mathcal H( \e/2 \, \cosh \sigma \, e^{ i \theta}) \right| = \left| \mathrm{tr} \left( \mathfrak g_\e^{-1} \mathfrak h_\e \right)(\sigma,\theta) \right| \leq \dfrac{c}{\cosh \sigma}. $
  
\end{proof}
\end{proposition}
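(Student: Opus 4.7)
The strategy is to compute the mean curvature of the Euclidean half-catenoid $C_{\e/2}$ with respect to the pulled-back ambient metric $g_m = (\mathcal X \circ \Lambda_m)^* g_{eucl}$, exploiting the fact that $C_{\e/2}$ is minimal in the flat metric and that $g_m$ differs from a constant Euclidean metric only by small perturbations in the relevant region. Indeed, in $\Omega_{cat}^m$ we have $|\zeta| = (\e/2)\cosh\sigma < \e^{2/3}/2$ and $|\xi_3| = (\e/2)|\sigma| \lesssim \e |\log \e|$, so the catenoid sits in an $O(\e^{2/3})$ neighbourhood of $(\zeta,\xi_3) = (0,0)$, where the conformal factor $a$ and the function $b$ are well approximated by their values at the origin.

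I would first Taylor expand $a$ and $b$ about the origin, noting in particular that $\partial_{\xi_3} a$ vanishes on $\{\xi_3 = 0\}$ and so is of size $O(\xi_3)$ in our region. From these expansions I would extract the bounds on the Christoffel symbols of the Levi-Civita connection $\nabla^\e$ of $g_m$: they are $O(1)$ for the purely spatial components and $O(\xi_3)$ whenever a derivative in $\xi_3$ is involved.

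Next, writing the covariant Hessian of the catenoid parametrisation as
\[
\nabla^\e_{\partial_p}\partial_q \;=\; \partial_p\partial_q\, X^{cat}_{\e/2} \;+\; \Gamma^{k}_{ij}\,\bigl(\partial_p X^{cat}_{\e/2}\bigr)^{i}\bigl(\partial_q X^{cat}_{\e/2}\bigr)^{j}\,\partial_k,
\]
and substituting $|\zeta| = (\e/2)\cosh\sigma$, $\xi_3 = (\e/2)\sigma$, I would read off the size of the correction to the Euclidean Hessian. Pairing with the $g_m$-unit normal $\mathcal N$ to the catenoid then gives the second fundamental form of $C_{\e/2}$ in $g_m$ in the form $\mathfrak h_\e = \e(d\sigma^2 - d\theta^2) + h_\e$ with $|(h_\e)_{pq}| \leq c\,\e^2\cosh\sigma$, while a parallel computation of the induced metric yields $\mathfrak g_\e = \e^2\cosh^2\sigma\,(d\sigma^2 + d\theta^2) + g_\e$ with $|(g_\e)_{pq}| \leq c\,\e^3\cosh^3\sigma$.

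Finally, the mean curvature is $\mathcal H = \mathrm{tr}(\mathfrak g_\e^{-1}\mathfrak h_\e)$. The leading piece $\mathrm{tr}\bigl(\frac{1}{\e^2\cosh^2\sigma}\,I\cdot \e(d\sigma^2 - d\theta^2)\bigr)$ vanishes exactly because the Euclidean catenoid is minimal, and each of the remaining contributions (the perturbation $h_\e$, the first-order expansion of the inverse metric $-\mathfrak g^{-1}g_\e\mathfrak g^{-1}$ contracted with the Euclidean piece of $\mathfrak h_\e$, and the higher-order cross terms) is of size $O(1/\cosh\sigma)$, using the algebraic balance between the prefactors $\e^2\cosh\sigma$, $\e^3\cosh^3\sigma$ and the inverse powers of $\e^2\cosh^2\sigma$. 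The principal obstacle is not the strategy but precisely the bookkeeping of this last step: one must ensure that the cancellation of the leading Euclidean contribution is exact, since otherwise an uncontrolled $\e^{-1}$ factor would survive — this is the whole point of choosing $C_{\e/2}$ rather than a less symmetric model.
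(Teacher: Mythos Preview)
Your proposal is correct and follows essentially the same route as the paper: expand the ambient metric $g_m$ near the origin, bound the Christoffel symbols (with the key observation that $\partial_{\xi_3}a = O(\xi_3)$), read off the perturbations $h_\e$ and $g_\e$ of the second fundamental form and induced metric, and use the exact vanishing of the Euclidean trace to conclude. Your remark that the cancellation of the leading term is the crucial point is exactly right, and your bookkeeping of the error sizes matches the paper's.
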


\subsection{Catenoidal neck at $ z=0 $}

In a small neighborhood of $ z = \te $ take the change of variables $ z = r \, e^{i \phi}, \ \phi \in S^1 $. Then, in the neighbourhood of $ z = 0$, we have
$$ B(r \, e^{i \phi}) = B(r) = \frac{1}{2} + \mathcal O(r^2). $$ 
We remind that that $\tau \, G_n(z^n) + \tilde \tau \, \tilde G_n(z^n) = \tilde{\mathcal G}_n \, B $ is a function whose graph is close to the lower part of the euclidean catenoid scaled a factor $\te$. Then, the graph of $\tilde{ \mathcal G }_n$ is close to the lower part of the surface $ \tilde C_{\te} $, parametrized by
$$ 
\tilde X^{cat}_{\te} \, : \, (s,\phi) \in \mathbb R \times S^1 \, \mapsto \, (\te \cosh s \, e^{i\phi}, 2 \, \te s) .
$$

Let us define a cut-off function $r \, \mapsto \, \eta_{\te}^0(r)$, such that
$$ 
\eta_{\te}^0(r) \equiv 1 \quad \mbox{for} \quad r \in \left( 0, 1/2 \, \te^{1/2} \right), \quad \eta_{\te}^0(r) \equiv 0 \quad \mbox{for} \quad  r > 2 \, \te^{1/2} .
$$
Taking the change of variables: $ \te \cosh s \, e^{i \phi} = z = r \, e^{i\phi} $, for $ s>0 $ or $ s<0 $ we can parametrize the lower and the upper part of $ \tilde C_{\te}$ as vertical graphs 
$$ 
z \, \mapsto \, (z, \pm \, 2 \, G^{cat}_{\te}), 
$$ 
where 
$$ 
G_{\te}^{cat}(r) = \frac{\te}{2} \log \left( \frac{\te}{2r} \right) + \mathcal O \left( \frac{\te^3}{r^2} \right). 
$$

We define the function 
\begin{align*} 
\tilde \Upsilon :  \bar D^2 \, \setminus \{0, z_1, \ldots, z_m \} & \longrightarrow \mathbb R, \\
& \tilde \Upsilon (r,\phi) = (1 - \eta_{\te}^0(r)) \, \tilde{ \mathcal G_n } (r \, e^{i\phi}) + 2 \, \eta_{\te}^0 \, G_{\e}^{cat}(r), 
\end{align*} 
and parametrized $\tilde{\mathcal S_n} $ in the region 
$$ 
\Omega_{cat}^0 : = \left\{ \te \cosh s \, e^{i\phi} \, : \, \te \cosh s < 1/2 \, {\te}^{1/2}, \, \phi \in S^1  \right\} ,
$$
by $ (s,\phi) \, \mapsto \, \mathcal X \circ \tilde X^{cat}_{\te}(s,\phi) $ and as a bi-graph~: 
\begin{multline*}
(r,\phi) \, \mapsto \, \mathcal X \left\{ (r \, e^{i\phi}, \tilde \Upsilon(r,\phi)) \cup (r \, e^{i\phi}, - \tilde \Upsilon(r, \phi)) \right\}, \\[3mm]
\mbox{for} \quad z \in \Omega_{glu}^0 : = \left\{ r \, e^{i\phi} \, : \, 1/2 \, \te^{1/2} < r < 2 \, \te^{1/2}, \ \phi \in S^1 \right\}.
\end{multline*} 

We use the notations~: 
\begin{multline*} 
B(s) = B(\te \cosh s) = \frac{1}{2}(1 + \te^2 \cosh^2 s) \quad \text{and} \\ A(s) = A(\te \cosh s, 2 \,\te s) = \frac{1}{1 + B(s) \left( \cosh(2 \,\te s) - 1 \right)} .
\end{multline*}  

\begin{proposition} \label{catm}
There exists a constant $c$ which does not depend on $\e$ such that in the region 
$$ 
\Omega_{cat}^0 = \left\{ \te \cosh s \, e^{i\phi} \, : \, \te \cosh s < 1/2 \, {\te}^{1/2}, \, \phi \in S^1  \right\} ,
$$ 
the mean curvature of the surface $\tilde{\mathcal S_n}$ satisfies
$$ 
\left| \mathcal H(\te \, \cosh s \, e^{i \phi}) \right| \leq c \, \te^{1-\beta}, \quad \forall \beta \in (0,1). 
$$
\end{proposition}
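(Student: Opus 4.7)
The plan is to repeat, mutatis mutandis, the computation carried out in the proof of Proposition~\ref{cat0}: compute the mean curvature $\mathcal H$ of the surface parametrized by $(s,\phi) \mapsto \tilde X^{cat}_{\te}(s,\phi) = (\te \cosh s \, e^{i\phi}, 2\te s)$ with respect to the ambient metric $\mathcal X^* g_{eucl} = A^2(dz^2 + B^2 \, dx_3^2)$, and carefully track the order of each term that appears.

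The structural observation is that $B(0) = 1/2$ and $A(0,0) = 1$, so the factor $2$ in the vertical component $2\te s$ is chosen precisely so that $\tilde X^{cat}_{\te}$ parametrizes a standard Euclidean catenoid of waist $\te$ in the frozen model metric $dz^2 + (1/2)^2 \, dx_3^2$ (obtained by evaluating $A$ and $B$ at the origin), and is therefore exactly minimal for that metric. As a consequence, $\mathcal H$ detects only the discrepancy between the ambient and the frozen metric. In the region $\Omega_{cat}^0$ one has $|z| = \te \cosh s \leq \te^{1/2}/2$ and $|x_3| = 2\te|s| \leq c \, \te \log \te^{-1}$, so for any $\beta \in (0,1)$
\[
B(z) - \tfrac{1}{2} = \tfrac{|z|^2}{2} = O(\te), \qquad A(z, x_3) - 1 = O(x_3^2) = O(\te^{2-\beta}),
\]
together with the analogous bounds on the first partial derivatives of $A$ and $B$. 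From these one reads off estimates for the Christoffel symbols $\Gamma^k_{ij}$ of $\mathcal X^* g_{eucl}$ in a neighbourhood of the origin, separating the horizontal and the vertical directions as was done for $(\mathcal X \circ \Lambda_m)^* g_{eucl}$ in Proposition~\ref{cat0}.

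The body of the proof then follows the same pattern: insert the parametrization into $\nabla^{\te}_{\partial_p} \partial_q = \partial_p \partial_q \tilde X^{cat}_{\te} + [\partial_p \tilde X^{cat}_{\te}]^i [\partial_q \tilde X^{cat}_{\te}]^j \, \Gamma^k_{ij} \, \partial_k$, compute the unit normal in the ambient metric, and expand the induced first and second fundamental forms as
\[
\mathfrak g_{\te} = \te^2 \cosh^2 s \, (ds^2 + d\phi^2) + g_{\te}, \qquad \mathfrak h_{\te} = \te \, (ds^2 - d\phi^2) + h_{\te}.
\]
The leading contributions to $\mathrm{tr}(\mathfrak g_{\te}^{-1} \mathfrak h_{\te})$ cancel, as this is the minimality of the Euclidean catenoid in the frozen metric, so $\mathcal H$ is controlled by $|g_{\te}|$ and $|h_{\te}|$ divided by the scale $\te^2 \cosh^2 s$ of $\mathfrak g_{\te}$.

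The main obstacle is the careful bookkeeping of these error terms. Contributions involving $B - 1/2 = \te^2 \cosh^2 s / 2$ come with an explicit $\cosh^2 s$ factor that cancels against the $\cosh^{-2} s$ in $\mathfrak g_{\te}^{-1}$, leaving a uniform $O(\te^2)$ remainder, and the horizontal Christoffel symbols, which are linear in $|z|$, are estimated in the same way. The genuinely delicate terms are those produced by $A - 1 = O(x_3^2)$ and by the vertical Christoffel symbols of order $O(x_3)$: the bound $|x_3| \leq c \, \te \log \te^{-1}$ allows one to absorb the logarithm into an arbitrarily small power of $\te$, giving an $O(\te^{2-\beta})$ contribution to $h_{\te}$, which after division by $\te^2 \cosh^2 s \geq \te^2$ yields precisely the claimed bound $|\mathcal H| \leq c \, \te^{1-\beta}$. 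The arbitrary $\beta \in (0,1)$ in the final estimate is the price paid for converting this logarithmic factor into a polynomial loss.
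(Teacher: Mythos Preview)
Your overall strategy is exactly the paper's: compute the first and second fundamental forms of $\tilde X^{cat}_{\te}$ in the ambient metric $\mathcal X^*g_{eucl}$, isolate the Euclidean-catenoid leading terms, and bound the trace of the remainder. The structural observation about the frozen metric at the origin is also the right heuristic.

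However, the final bookkeeping step contains an arithmetic slip that hides the essential cancellation. You assert $h_{\te}=O(\te^{2-\beta})$ and then claim that dividing by $\te^2\cosh^2 s\ge \te^2$ yields $O(\te^{1-\beta})$; but $\te^{2-\beta}/\te^2=\te^{-\beta}$, which blows up. The bound $|h_{\te}|=O(\te^{2-\beta})$ is in fact correct as a uniform upper bound on the region (since $\cosh s\le \tfrac12\te^{-1/2}$), but it is too crude to close the argument: at $s=0$ one has $\mathfrak g_{\te}\sim \te^2$, so a generic $O(\te^{2-\beta})$ error in $h_{\te}$ would produce an $O(\te^{-\beta})$ mean curvature there.

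What actually saves the estimate is that the perturbation of the second fundamental form carries an explicit $\cosh^2 s$ factor: the Christoffel correction to $\nabla_{\partial_p}\partial_q$ is $[\partial_p \tilde X^{cat}_{\te}]^i[\partial_q \tilde X^{cat}_{\te}]^j\,\tilde\Gamma^k_{ij}\,\partial_k$, and the two tangent factors are each $O(\te\cosh s)$. Combined with the sharper Christoffel bounds near the origin (e.g.\ $\tilde\Gamma^{1}_{11}=O(|z|\,x_3)$, $\tilde\Gamma^{3}_{11}=O(x_3)$, $\tilde\Gamma^{1}_{33}=O(|z|)$), one obtains, as in the paper,
\[
|(\tilde h_{\te})_{pq}|\le c\,\te^{3-\beta}\cosh^2 s,\qquad |(\tilde g_{\te})_{pq}|\le c\,\te^{4-\beta}\cosh^2 s,
\]
and \emph{now} the $\cosh^2 s$ cancels against the one in $\tilde{\mathfrak g}_{\te}^{-1}\sim(\te^2\cosh^2 s)^{-1}$ to give $|\mathcal H|\le c\,\te^{1-\beta}$. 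So your sketch needs one more line of power-counting: keep the $\cosh^2 s$ in the error of $h_{\te}$ rather than absorbing it prematurely into a uniform $\te^{2-\beta}$.
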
 
\begin{proof}
As in the proposition \ref{cat0} we would like to calculate the mean curvature of $ \tilde{C_\e} $ with respect to the ambient metric 
$$ 
\mathcal X^* g_{eucl}(z,x_3) = A^2(z,x_3) \left( dz^2 + B^2(z) dx_3^2 \right) = : \tilde g(z,x_3) .
$$
 
\medskip
  
We denote by $ \tilde \nabla^{\te}$ the Levi-Civita connection corresponding to this metric. Then, in the neighborhood of $(z,x_3) = (0,0)$ the Cristoffel symbols satisfy~:
  
  $$ \begin{array}{l} \tilde \Gamma^1_{11} = - \tilde \Gamma^1_{22} = \tilde \Gamma^2_{12} = \frac{1}{A} \frac{\partial A}{\partial {x_1}} =  \mathcal O(|z|
  x_3), \quad 
  \tilde \Gamma^2_{11} = - \tilde \Gamma^2_{22} = - \tilde \Gamma^1_{12} = \frac{1}{A} \frac{\partial A}{\partial {x_2}} = \mathcal O(|z| x_3) \\ \\
  \tilde \Gamma^1_{13} = \tilde \Gamma^2_{23} = \tilde \Gamma^3_{33} = \frac{1}{A} \frac{\partial A}{\partial {x_3}} = \mathcal O(x_3), \quad  
  \tilde \Gamma^3_{11} = \tilde \Gamma^3_{22} = - \frac{1}{AB^2} \frac{\partial A}{\partial x_3} = \mathcal O(x_3), \\ \\
  \tilde \Gamma^1_{33} = - (\frac{B^2}{A} \frac{\partial A}{\partial x_1} + B \frac{\partial B}{\partial x_1}) = \mathcal O(|z|), \quad
  \tilde \Gamma^2_{33} = - (\frac{B^2}{A} \frac{\partial A}{\partial x_2} + B \frac{\partial B}{\partial x_2}) = \mathcal O(|z|), \\ \\
  \tilde \Gamma^3_{13} = \frac{1}{A} \frac{\partial A}{\partial x_1} + \frac{1}{B} \frac{\partial B}{\partial x_1} = \mathcal O(|z|), \quad
  \tilde \Gamma^3_{23} = \frac{1}{A} \frac{\partial A}{\partial x_2} + \frac{1}{B} \frac{\partial B}{\partial x_2} = \mathcal O(|z|) \\ \\ 
  \tilde \Gamma^1_{23} = \tilde \Gamma^2_{13} = \tilde \Gamma^3_{12} = 0    \end{array} $$
  Using $ |z| = \te \cosh s, \ x_3 = 2 \, \te s, $ and
  $$ 
  \tilde \nabla^{\te}_{\partial_p} \partial_q = \partial_p \, \partial_q \, \tilde{X}_{\te}^{cat} + [\partial_p \tilde X_{\te}^{cat}]^i [\partial_q 
  \tilde{X}_{\te}^{cat}]^j \, \tilde \Gamma^k_{ij} \partial_k, 
  $$
  where $ \partial_p $ and $\partial_q$ stand for $\partial_s$ or $\partial_t$, we get
  \begin{multline*}
   \left| [ \tilde \nabla^{\te}_{\partial_p} \partial_q - \partial_p \, \partial_q \, \tilde X_{\te}^{cat}]^i (s,\theta)  \right| \leq c \, \te^3 \cosh^3 
   s, \quad i = 1,2 \\
\left| [\tilde \nabla^{\te}_{\partial_p} \partial_q - \partial_p \, \partial_q \, \tilde X_{\te}^{cat}]^3 (s,\theta) \right| \leq c \, \te^{3-\beta} \cosh^2s, \quad \forall \beta \in (0,1) .
  \end{multline*}
The normal vector field to $\tilde C_{\te}$ with respect to the metric $\mathcal X^*g_{eucl}$ is 
\begin{equation*} 
\tilde{\mathcal N}(s,\phi) = \frac{1}{ A \sqrt{ \frac{4B^2}{\cosh^2 s} + \tanh^2 s } } \, \left( - \frac{2 B}{\cosh s} \, e^{i\phi}, \frac{1}{B} \tanh s  \right).   \end{equation*}

As the third coordinate of the vector $ \partial_p \, \partial_q \, \tilde{X}_{\te}^{cat} $ is zero for all $p$ and $q$, we get the following expression for the second fundamental form~:
$$ 
\tilde{\mathfrak h}_{\te}(s,\phi) = \te \, (ds^2 - d\phi^2) + \tilde{h}_{\te}(s,\phi), 
$$
where $  \left| \left( \tilde{h}_{\te} \right)_{pq}(s,\phi) \right| \leq c \, \te^{3-\beta} \cosh^2 s. $ On the other hand the metric induced on $C_{\te}$ from $ \mathcal X^*g_{eucl}$ can be written as
$$ 
\tilde{\mathfrak g}_{\te}(s,\phi) = \te^2 \cosh^2 s(ds^2 + d \phi^2) + \tilde{g}_{\te}(s,\phi), 
$$
where $ \left| \left( \tilde{g}_{\te} \right)_{pq}(s,\phi) \right| \leq c \, \te^{4-\beta} \, \cosh^2 s$. Finally, 
$$ 
\left| \mathcal H(\te \, \cosh s \, e^{ i \phi} ) \right| = \left| \mathrm{tr} \left( \tilde{ \mathfrak g}_{\te}^{-1} \tilde{\mathfrak h}_{\te} \right)(s,\phi) \right| \leq c \, \te^{1 - \beta}, \quad \forall \beta \in (0,1). 
$$
 \end{proof}
 
 \subsection{The graph region}
 
 Away from the catenoidal bridges and the catenoidal neck, that is in the region
 $$ 
 \Omega_{gr}: = \left\{ z \in D^2 \, : \, 2 \, \te^{1/2} < |z| < 1 \right\} \setminus \underset{m=1}{\overset{n}{\cup}} \lambda_m \left\{ \zeta \in \mathbb 
 C_- \, : \, |\zeta| < 2 \, \e^{2/3} \right\} ,
 $$
 we parametrize the surface $\tS$ as a bi-graph
 $$ 
 \mathcal X \left\{ ( z, \tilde{\mathcal G}_n(z) ) \cup ( z, - \tilde{\mathcal G}_n(z) ) \right\}. 
 $$
 \begin{proposition} \label{graph}
 There exists a constant $c$ which does not depend on $\e$, such that in the region 
 \begin{multline*} 
 \Omega_{gr} \cup \Omega_{glu}^0 \underset{m=1}{\overset{n}{\cup}} \Omega_{glu}^m \\[3mm] 
 = \left\{ z \in D^2 \, : 1/2 \, \te < |z| < 1 \right\} \setminus \underset{m=1}{\overset{n}{\cup}} \lambda_m \left\{ \zeta \in \mathbb C_- \, : \, |\zeta| < 
 1/2 \, \e^{2/3} \right\},
 \end{multline*}
 the mean curvature $\mathcal H$ of $ \tS $ satisfies
 \begin{equation} \label{MCgr} 
 \left| \mathcal H(z) \right| \leq c \, \e^{3 - \beta} \left( \dfrac{1}{|z|^4} + \sum_{m=1}^n \dfrac{1}{|z-z_m|^4} \right), \quad \forall \beta \in (0,1) .
 \end{equation}
 \end{proposition}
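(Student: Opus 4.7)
The plan is to express the mean curvature via Lemma \ref{meancurvatureGraph} applied to the height function of the bi-graph in each sub-region: $u = \tilde{\mathcal G}_n$ in $\Omega_{gr}$, $u = 2\Upsilon^m$ in $\Omega_{glu}^m$, and $u = 2\tilde\Upsilon$ in $\Omega_{glu}^0$. In each case I decompose
\[
H(u) = L_{gr}(u) + \mathcal Q(u),
\]
with $L_{gr}(u) = \Delta(Bu)$ the linearization at $0$ and $\mathcal Q(u) := H(u) - L_{gr}(u)$ the higher-order remainder. A Taylor expansion of the two summands in Lemma \ref{meancurvatureGraph} (using that $\partial_{x_3}A|_{x_3 = 0} = 0$, so that no linear-in-$u$ contribution arises from the coefficient $A^2(u)$) shows that whenever $|u| + |\nabla u|$ stays bounded,
\[
|\mathcal Q(u)| \leq c\,\bigl( |u|^3 + |u|\,|\nabla u|^2 + u^2\,|\nabla^2 u| + |\nabla u|^2\,|\nabla^2 u| \bigr).
\]

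Setting $\delta_n(z) := \min\bigl(|z|,\min_m|z-z_m|\bigr)$, I would extract from \eqref{functionG} and its derivatives the pointwise bounds
\[
|\tilde{\mathcal G}_n(z)| \leq c\,\e\,|\log\delta_n(z)|,\qquad |\nabla\tilde{\mathcal G}_n(z)| \leq \frac{c\,\e}{\delta_n(z)},\qquad |\nabla^2\tilde{\mathcal G}_n(z)| \leq \frac{c\,\e}{\delta_n(z)^2},
\]
noting that the $\te$-contributions near $z = 0$ are of the same order in $\e$ since $d(n) = \e/\te \sim 1$. In $\Omega_{gr}$ we have $L_{gr}(\tilde{\mathcal G}_n) = 0$ by construction, so $\mathcal H = \mathcal Q(\tilde{\mathcal G}_n)$; substituting the above estimates, the dominant term is $|\nabla\tilde{\mathcal G}_n|^2\,|\nabla^2\tilde{\mathcal G}_n| \lesssim \e^3/\delta_n(z)^4$, while the subdominant ones carry only logarithmic factors that are absorbed into the $\e^{-\beta}$ loss, giving \eqref{MCgr} inside $\Omega_{gr}$.

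On the gluing annulus $\Omega_{glu}^m$ I would write $2\Upsilon^m = \tilde{\mathcal G}_n + 2\eta\,\delta\mathcal G$ (pulled back through $\lambda_m$), with $\delta\mathcal G := G^{cat}_{\e/2} - \tilde{\mathcal G}_n/2$. Since $L_{gr}(\tilde{\mathcal G}_n) = 0$, one gets $L_{gr}(2\Upsilon^m) = 2\,\Delta(B\,\eta\,\delta\mathcal G)$. On the support of $\nabla\eta$ we have $\rho \sim \e^{2/3}$; combining \eqref{functionG} with the expansion of the catenoidal profile yields $\delta\mathcal G = \mathcal O(\e^3/\rho^2) + \mathcal O(\e^{1-\beta}\rho) = \mathcal O(\e^{5/3-\beta})$, and each derivative contributes an additional factor of $\rho^{-1}$. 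Together with $|\nabla\eta| \sim \e^{-2/3}$ and $|\nabla^2\eta| \sim \e^{-4/3}$, every term of $\Delta(B\,\eta\,\delta\mathcal G)$ turns out to be of size $\e^{3-\beta}/\rho^4$, which is the desired bound. The nonlinear contribution $\mathcal Q(2\Upsilon^m)$ is handled as in the graph region, since $2\Upsilon^m$ obeys the same pointwise estimates as $\tilde{\mathcal G}_n$. The puncture annulus $\Omega_{glu}^0$ is dealt with in identical fashion, replacing $C_{\e/2}$ by $\tilde C_{\te}$ and the cut-off scale $\e^{2/3}$ by $\te^{1/2}$.

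The main obstacle is the careful bookkeeping of the logarithmic factors and of the $\beta$-loss inherited from the error terms in \eqref{functionG}, in particular through the distortion induced by the conformal maps $\lambda_m$; one needs the cancellations implicit in $L_{gr}(\tilde{\mathcal G}_n) = 0$ to align with the derivatives of the cut-off function so that no term worse than $\e^{3-\beta}/|z-z_m|^4$ survives.
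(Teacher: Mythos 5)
Your proposal is correct and follows essentially the same route as the paper: decompose $H(u) = \Delta(Bu) + P_3(u,\nabla u,\nabla^2 u)$ (the paper's notation for your $\mathcal Q$), exploit $\Delta(B\tilde{\mathcal G}_n) = 0$ in $\Omega_{gr}$ together with the pointwise bounds on $\tilde{\mathcal G}_n$ and its derivatives, and in the gluing annuli isolate the contribution of the cut-off acting on the discrepancy $G^{cat}_{\e/2}-\tfrac{1}{2}\bar{\mathcal G}_n = \mathcal O(\e^3/\rho^2) + \mathcal O(\e^{1-\beta}\rho)$ through the conformal change of variables $\lambda_m$. Your shorthand $\delta_n(z)^{-1}$ in place of $|z|^{-1} + \sum_m |z-z_m|^{-1}$ drops factors of $n$, but as you note these (like the logarithms) are absorbed into the $\e^{-\beta}$ loss since $n = \mathcal O(|\log\e|)$; with that caveat your argument matches the paper's in both structure and the dominant term count.
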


\begin{proof}
According to the lemma \eqref{meancurvatureGraph}, the mean curvature of the graph $ \mathcal X \left( z, u(z) ) \right), \,  u \in \mathcal C^2(D^2)$ satisfies~:
 \begin{align*} 
  H(u) & =  \frac{1}{A^3(u) \, B} \, \mbox{\rm div} \, \left( \frac{A^2(u) \, B^2 \, \nabla u}{\sqrt{1+ B^2\, |\nabla u|^2}} \right) + 2 \, \sqrt{1+ B^2\, |
  \nabla u|^2} \,  \sinh u \\ \\
  & = \frac{2}{A^2(u)} \frac{B \nabla u \nabla A(u)}{\sqrt{ 1 + B^2 |\nabla u|^2 }} + \frac{2}{A(u)} \frac{\nabla B \nabla u}{\sqrt{ 1 + B^2 |\nabla u|^2 }} + 
  \frac{1}{A(u)} \frac{B \Delta u}{\sqrt{ 1 + B^2 |\nabla u|^2 }} \\ \\
  & - \frac{1}{A(u)}\frac{B^2 \nabla B \nabla u |\nabla u|^2}{( 1 + B^2 |\nabla u|^2 )^{3/2}} - \frac{1}{2A(u)} \frac{B^3 \mathrm{Hess_u}(\nabla u, \nabla u)}{( 1 + B^2 |\nabla 
  u|^2 )^{3/2}} + 2 \sqrt{1 + B^2 |\nabla u|^2} \sinh u \\ \\
  & = \Delta \, (B u) + P_3(u, \nabla u, \nabla^2 u)
 \end{align*}
where $P_3$ is a bounded nonlinear function which can be decomposed in entire series in $u$ and the components of $\nabla u$ and $\nabla^2 u$ for $ \|u\|_{\mathcal C^1} \leq 1 $ with terms of lowest order 3 and where the components of $\nabla u$ appear with an even power and the components of $\nabla^2 u$ only with the power $1$. 
\medskip
\newline In the region $\Omega_{gr}$ the surface $ \tS $ is parametrized as a graph of one of the functions $ \pm \, \tilde{\mathcal G}_n $, where
$$ 
B(z) \, \tilde{\mathcal G}_n(z) = - \frac{\e n}{2} + \e \mathrm{Re}(f_n(z)) + \te \log |z| + \te , 
$$
Studying the behaviour of the function $f_n(z)$ one can easily verify that
\begin{multline*} 
 \left| \tilde{\mathcal G}_n(z) \right| \leq c \e \, \left( | \log \e | + \left| \log|z| \right| + \sum_{m=1}^n \left| \log|z-z_m| \right| \right), \\[3mm] 
 \left| \nabla \tilde{\mathcal G}_n(z) \right| \leq c \e \, \left( \frac{1}{|z|} + \sum_{m=1}^n \frac{1}{|z-z_m|} \right), \\[3mm] 
 \left| \nabla^2 \tilde{\mathcal G}_n(z) \right| \leq c \e \, \left( \frac{1}{|z|^2} + \sum_{m=1}^n \frac{1}{|z-z_m|^2} \right) 
\end{multline*}
As $ \Delta \left( B \tilde{\mathcal G}_n \right) = 0 $, analysing carefully the terms in $P_3( \tilde{\mathcal G}_n, \nabla \tilde{\mathcal G}_n, \nabla^2 \tilde{\mathcal G}_n)$ one can see that \eqref{MCgr} is true in $\Omega_{gr}$.

\medskip

In the regions $\Omega_{glu}^m$ the surface $\tS$ is parametrized as a graph of one of the functions $ \pm 2 \, \Upsilon^m $, where
$$ 
\Upsilon^m(\lambda_m(\rho/2 \, e^{i\theta})) = \, \bar \Upsilon(\rho,\theta) =  (1 - \bar \eta_\e(\rho)) \, \frac{1}{2} \, \bar { \mathcal G }_n(\rho,\theta) + \, \bar \eta_\e(\rho) \, G_{\e/2}^{cat}(\rho). 
$$
In the neighbourhood of $\zeta = 0$ we have: 
$$ 
\nabla_z = \frac{1}{2} \, \nabla_\zeta ( 1 + \mathcal O(|\zeta|) ), \quad \nabla^2_z = \frac{1}{4} \, \nabla^2_\zeta(1 + \mathcal O(|\zeta|)) .
$$
Using that $  \, \left| \lambda_m(\rho/2 \, e^{i \theta}) - z_m \right| = \rho + \mathcal O(\rho^2) \, $, we obtain

\begin{align*}  
  & \bar { \mathcal G }_n \sim G_{\e/2}^{cat} = \mathcal O(\e \log \e), \quad \bar { \mathcal G}_n - G_{\e/2}^{cat} = \mathcal O(\e^{1 - \beta} \rho ), 
  \\[3mm] 
  & \left| \nabla \bar { \mathcal G}_n \right| \sim \left| \nabla G_{\e/2}^{cat} \right| = \mathcal O \left( \frac{\e}{\rho} \right)  \quad \left| \nabla 
  (\bar { \mathcal G}_n - G_{\e/2}^{cat}) \right| = \mathcal O(\e^{1-\beta}), \\[3mm]
  & \left| \nabla^2 \bar { \mathcal G }_n \right| \sim \left| \nabla^2 G_{\e/2}^{cat} \right| = \mathcal O \left( \frac{\e}{\rho^2} \right), \quad \left| 
  \nabla^2( \bar { \mathcal G}_n + G_{\e/2}^{cat} ) \right| = \mathcal O(\e^{1/3 - \beta})  
 \end{align*}
We introduce the function
$$ 
\bar B(\rho,\theta) = B \left( \lambda_m \left( \dfrac{\rho}{2} \, e^{i \theta} \right) \right) = \frac{ 1 + \left| \left( 1 + \frac{\rho}{2} \, e^{i\theta} \right) / \left( 1 - \frac{\rho}{2} \, e^{i \theta} \right) \right|}{2} = 1 + \mathcal O(\rho). 
$$
We have
$$ 
\partial_\rho \bar B \sim \partial_\rho^2 B \sim \partial_\rho \partial_\theta \bar B = \mathcal O(1), \quad \partial_\theta \bar B \sim \partial_\theta^2 \bar B = \mathcal O(\rho) .
$$
\newline On the other hand, the cut-off function $\bar \eta_\e$ satisfies
$$ 
\bar \eta_\e = \mathcal O(1), \quad \dfrac{d \bar \eta_\e}{d \rho} = \mathcal O \left( \frac{1}{\rho} \right), \quad \dfrac{ d^2 \bar \eta_\e}{d \rho^2 } = \mathcal O \left( \frac{1}{\rho^2} \right). 
$$
Using that
$$ 
\nabla \bar \Upsilon = \frac{1}{2}(1 - \bar \eta_\e) \, \nabla \bar { \mathcal G }_n + \bar \eta_\e \nabla G_{\e/2}^{cat} + \nabla \bar \eta_\e \left( G_{\e/2}^{cat} - \frac{1}{2} \bar {\mathcal G }_n \right) , 
$$
\begin{align*} 
 \nabla^2 \bar \Upsilon & = \frac{1}{2} ( 1 - \bar \eta_\e) \, \nabla^2 \bar { \mathcal G }_n + \bar \eta_\e \nabla^2 G_{\e/2}^{cat} \\ 
 & + 2 \nabla \bar \eta_\e \left( \nabla \left( G_{\e/2}^{cat} - \frac{1}{2} \bar {\mathcal G}_n \right) \right)^{t} + \nabla^2 \bar \eta_e \left( G_{\e/2}^{cat} - \frac{1}{2} \bar{ \mathcal G }_n \right) ,
\end{align*}
we get the estimates
$$ 
\left| \nabla \bar \Upsilon \right| = \mathcal O \left( \frac{\e^{1- \beta}}{\rho} \right), \quad \left| \nabla^2 \bar \Upsilon \right|  = \\ \mathcal O \left( \frac{\e^{1 - \beta}}{\rho^2} \right) ,
$$
and 
$$ 
P_3(\Upsilon) = \mathcal O \left( \frac{\e^{3 - \beta}}{\rho^4} \right) = \mathcal O(\e^{1/3 - \beta}) ,
$$
for all $\beta \in (0,1)$. We also have 
$$ 
\Delta_z = \frac{1}{4} \, |1-\zeta^2| \, \Delta_\zeta, 
$$ 
where $ \Delta_z $ and $ \Delta_\zeta $ are Laplacian operators in coordinates $z$ and $\zeta$. So,
$$ 
\Delta \left( \bar B \, G_{\e/2}^{cat} \right) = G_{\e/2} \, \Delta \bar B + 2 \nabla \bar B \, \nabla G_{\e/2}^{cat} + \bar B \, \Delta 
G_{\e/2}^{cat} = \mathcal O \left( \frac{\e^3}{\rho^4} \right).
$$
Putting this calculations together, we check that in $ \Omega_{glu}^m $ the mean curvature of $\tS$ satisfies
$$ 
\mathcal H = \mathcal O \left( \e^{1/3 - \beta} \right), \quad \forall \beta \in (0,1). 
$$ 
An identical proof shows that in $\Omega_{glu}^0$ we have $ \mathcal H = \mathcal O(\e^{1-\beta})$.
 \end{proof}

\section{Perturbations of $\tS$}

Recall that the surface $\tilde{\mathcal S_n}$ can be seen as an image by the mapping $\mathcal X$ of a surface $\tilde S_n$ (constructed in the previous paragraph) which is contained in the unit cylinder $D^2 \times \mathbb R$.  We would like to calculate the mean curvature of small perturbations of $\tilde{\mathcal S_n}$ and to this end we calculate the mean curvature of small perturbations of the surface $\tilde S_n$ with respect to the metric $\tilde g = \mathcal X_* g_{eucl}$. 

\medskip

Let, as before, $ \tilde{\mathcal N} $ denote the unit normal vector field to $\tilde C_{\te}$ with respect to the metric 
$$
\tilde g(z,x_3) = A^2(z,x_3) \left( dz^2 + B^2(z) \, dx_3^2 \right). 
$$
and take a function $ w \in \mathcal C^2(\tilde S_n) $ small enough, invariant under rotations by the angle $\frac{2\pi}{n}$ and the transformation $ z \mapsto \bar z $. 
We denote $\tilde S_n(w)$ the surface parametrized by
$$ 
(s,\phi) \in \mathbb R \times S^1 \, \mapsto \, \tilde X_{\te}^{cat}(s,\phi) + w(s,\phi) \, \tilde{\mathcal N}(s,\phi) ,
$$
in region $\Omega_0^{cat}$. Furthermore, in the region
\begin{align*} 
 \Omega_{gr} \cup \Omega_{glu}^0 = \left\{ z \in D^2 \, : \, 1/2 \, \te^{1/2} < |z| < 1 \right\} \setminus \overset{n}{\underset{m=1}{\cup}} \lambda_m \left\{ \zeta \in \mathbb C_- \, : \, |\zeta| < 2 \, \e^{2/3} \right\} ,
\end{align*}
we parametrize $S_n(w)$ by
$$ 
z \, \mapsto \, \left( z,  \pm \, \tilde \Upsilon(z) \right) \pm w(z) \, \tilde \Xi_{\te}(z) , \quad \mbox{where} \quad \tilde \Xi_{\te} = (1 - \eta_{\te}^0) \, \, \partial_{x_3} +  \eta_{\te}^0 \, \frac{1}{2} \, \tilde{\mathcal N}, 
$$
where $ \eta_{\te}^0(|z|) $ is the cut-off function defined in the paragraph 6.2. Notice that in $\Omega_{glu}^0$
$$ 
\| \tilde \Xi_{\te} \|_{\tilde g} \sim \| \partial_{x_3} \|_{\tilde g} = \dfrac{1}{2} + \mathcal O(\te).
$$

\medskip

In the neighbourhood of the $n$-th root of unity $z_m$ the surface $\tilde S_n$ can be seen as an image by the mapping $\Lambda_m$ of a surface $\bar S_n$ contained in $\mathbb C_- \times \mathbb R$. We put  $ \bar w = w \circ \lambda_m \in \mathcal C^2(\bar S_n) $ and parametrize $\tilde S_n(w)$ in the region $\Omega_{cat}^m$ by 
$$ 
(\sigma, \theta) \in \mathbb R \times \left[ \pi/2, 3\pi/2 \right] \, \mapsto \, \Lambda_m \left( X_{\e/2}^{cat}(\sigma,\theta) + \dfrac{a}{2} \, \bar w \, \mathcal N(\sigma, \theta) \right) ,
$$
where $ \mathcal N $ is the unit normal vector field to the half-catenoid $C_{\e/2}$ with respect to the metric 
$$ 
g_m(\zeta,\xi_3) = (\Lambda_m \circ \mathcal X)_* g_{eucl} = a^2(\zeta,\xi_3) \left( d \zeta^2 + b^2(\zeta) \, d \xi_3^2 \right) .
$$
In the regions $\Omega_{glu}^m $, we parametrize $\tilde S_n(w)$ by
$$ 
\zeta \, \mapsto \, \Lambda_m \left( \left( \zeta, \pm \, \bar \Upsilon(\zeta) \right) \, \pm \, \bar w(z) \, \bar \Xi_{\e}(\zeta) \right), \quad \mbox{where} \quad \bar \Xi_{\te} = \frac{1}{2} \left( ( 1 - \bar \eta_\e) \, \partial_{\xi_3} + \bar \eta_\e \, a \, \mathcal N \right) ,
$$
and $ \bar \eta_\e(|\zeta|)$ be the cut-off function, introduced in the paragraph 6.1. Notice that in $ \Omega_{glu}^m $, we have
$$ 
\| \Xi_{\te} \|_{g_m} \sim \dfrac{1}{2} \| \partial_{\xi_3} \|_{g_m} = 1 + \mathcal O(\e^{4/3}) .
$$

\begin{remark}
We multiply the vector field $\mathcal N$ by the factor $a$ in order to make the vector field
$$
\partial_\theta \left( a \, \mathcal N \right)(\zeta) \left. \right|_{\theta \in \left\{ \frac{\pi}{2}, \frac{3\pi}{2} \right\}} ,
$$
horizontal. In this case, the condition sufficient for $\tS(w)$ to be orthogonal to the unit sphere is
$$ 
\left. \partial_\theta \left( \bar w \right) \right|_{\theta \in \left\{ \frac{\pi}{2}, \frac{3\pi}{2} \right\}} = 0 .
$$
\end{remark}
 
\begin{notation}
Let $\Omega$ denote a coordinate domain we work in. From now on, when we don't need a more detailed information, we use the following notations~:
\begin{itemize}
 
\item $L$ for any bounded second order linear differential operator defined in $\Omega$ (in other words $L \, w$ is a linear combination of $w$ and the components of $\nabla w$ and $\nabla^2 w$ with coefficients which are bounded functions in $\Omega$, where $\nabla$ and $\nabla^2$ are the gradient and the Hessian in the chosen coordinates).
 
\smallskip
 
 \item $Q^k(w, \nabla w, \nabla^2 w)$, $k \in \mathbb N$, for any nonlinear function, which can be decomposed in entire series with terms of lowest order $k$, and where the components of $\nabla^2 w$ appear only with power 1. We will also use the notation $Q^k(w)$ for brevity.
 
\smallskip
 
\item Let ${\gamma} \in \mathcal C^{\infty} (\Omega)$ be a positive real function. We denote $ L^\gamma \, w $ and $ Q^{k,\gamma}(w) $ functions which share the same properties as $ L \, w $ and $ Q^k(w) $ with the only difference that the components of the gradient and the Hessian of $w$ are calculated with respect to the metric $ \gamma^{-2} \, g_{eucl} $.

\smallskip

For example, if we work in the coordinates $(r,\phi)$ and take $ \gamma = r $, then $L^\gamma$ will be a linear differential operator in $ r^2 \partial^2_r$, $\partial^2_\phi$, $r \partial^2_{r \phi}$, $r \partial_r$ and $\partial_\phi$.

 \end{itemize}

\end{notation}

\subsection{Mean curvature of the perturbed graph} 

In the region 
$$ 
\Omega_{gr} = \{ z \in D^2 \, : \, 2 \, \te^{1/2} < |z| < 1 \} \setminus \underset{m=1}{\overset{n}{\cup}} \lambda_m \, \{ \zeta \in \mathbb C_- \, : \, |\zeta| < 2 \, \e^{2/3} \} ,
$$
we suppose that $ \| w \|_{\mathcal C^2(\Omega_{gr})} < 1 $. Then, the mean curvature of $ \tS(w) $ satisfies 
$$ 
\mathcal H(w) = \mathcal H(0) + \Delta ( B \, w ) + P_3 \left( \tilde{\mathcal G}_n + w \right). 
$$
Analysing carefully the the terms of $P_3$ and the expansion of the function $ \tilde{\mathcal G}_n $, we get 
\begin{equation} \label{MCgrPert} \begin{array}{ll} 
 \mathcal H(w) & = \mathcal H(0) + \Delta ( B \, w ) \\[3mm] 
 & + \mathrm{max} \left\{ \left| \nabla \tilde{\mathcal G}_n \right|^2, \, \left| \nabla^2 \tilde{\mathcal G}_n \right| \left| \nabla \tilde{\mathcal G}_n 
 \right|, \, \tilde{\mathcal G}_n^2, \, \left| \tilde{\mathcal G}_n \right| \left| \nabla^2 \tilde{\mathcal G}_n \right|, \left| \tilde{\mathcal G}_n \right| \left| 
 \nabla \tilde{ \mathcal G}_n \right| \right\} \, L \, w \\[3mm]
 & + \mathrm{max} \left\{ \left| \tilde{\mathcal G}_n \right|, \, \left| \nabla \tilde{\mathcal G}_n \right|, \, \left| \nabla^2 \tilde{\mathcal G}_n \right| 
 \right\} \, Q_2(w) + Q_3(w) \end{array}
\end{equation} 
We introduce the weight functions
$$ 
\gamma_0(z) = |z|, \quad \gamma_m(z) = |z-z_m|, \ m = 1, \ldots, n \quad \mbox{and} \quad \gamma(z) = |z| \, \overset{n}{ \underset{m=1}{\Pi} } |z-z_m| .
$$ 
Then, (using the relation between $\e$ and $\te$) \eqref{MCgrPert} can be written as 
$$ 
\mathcal H(w) = \mathcal H(0) + \Delta ( B \, w) + \dfrac{\e^{2-\beta}}{\gamma^4} \, L^{\gamma}_z \, w + \dfrac{\e^{1-\beta}}{\gamma^4} \, Q_z^{2,\gamma}(w) + \dfrac{\e^{-\beta}}{\gamma^4} \, Q_z^{3,\gamma}(w) .
$$
for all $\beta \in (0,1)$. (We use the lower index to indicate the coordinate system we work in). On the other hand, in the neighbourhood of $ z = 0 $ we have
$$ 
\mathcal H(w) = \mathcal H(0) + \Delta ( B \, w ) + \frac{\te^2}{|z|^4} \, L^{\gamma_0}_z \, w + \frac{\te}{|z|^4} \, Q_z^{2,\gamma_0}(w) + \frac{1}{|z|^4} \, Q_z^{3,\gamma_0}(w) ,
$$
and in the neighbourhood of $z = z_m$ 
\begin{multline*} 
 \mathcal H(w) = \mathcal H(0) + \Delta ( B \, w ) + \frac{\e^{2 - \beta}}{|z-z_m|^4} \, L^{\gamma_m}_z \, w + 
 \frac{\e^{1 - \beta}}{|z-z_m|^4} \, Q_z^{2,\gamma_m}(w) + \frac{\e^{-\beta}}{|z-z_m|^4} \, Q_z^{3,\gamma_m}(w).
\end{multline*} 
for all $ \beta \in (0,1) $ (where we used $ \dfrac{1}{\gamma} < \dfrac{n}{\gamma_m} $ in the neighborhood of $z_m$).

\subsection{Mean curvature of the perturbed neck}

In the region 
$$ 
\Omega_{cat}^0 = \{ z \in D^2 \, : \, \te < |z| < 1/2 \, \te^{1/2} \} ,
$$
the surface $\tilde S_n(w)$ is parametrized as a normal graph around $\tilde C_{\te}$ for the function $u = 1/2 \, w $. We suppose that
\begin{equation} \label{CondNormw} 
 \left\| \dfrac{w}{\te \cosh s} \right\|_{ \mathcal C^2( (-s_{*}, s_{*}) \times S^1 )} \leq 1, \quad \te \, \cosh s_* = 1/2 \, \te^{1/2}
\end{equation}

The tangent space to $\tilde S_n(w)$ is spanned by the vector fields
$$ 
\tilde T_s(u) = \tilde T_s + \partial_s u \, \tilde{\mathcal N} + u \, \partial_s \tilde{\mathcal N}, \quad \tilde T_\phi(u) = \tilde T_\phi + \partial_\phi u \, \tilde{\mathcal N} + u \, \partial_\phi \tilde{\mathcal N}, 
$$
and let us choose functions $ \nu, \varkappa, \mu \in \mathcal C^{\infty}(\mathbb R)$, such that $ \nu(0) = \varkappa(0) = \mu(0) = 0 $ and
$$ 
\tN(u) = \tN + \nu(u) \, \tN +  \varkappa(u) \, T_s + \mu(u) \, \tilde T_\phi, 
$$
is the normal unit vector field to $\tilde S(w)$. We have
\begin{equation} \label{CoefNorm} 
\tilde g(u) \left( \tN(u), \tilde T_s(u) \right) = 0, \quad \tilde g(u) \left( \tN (u), \tilde T_\phi(u) \right) = 0, \quad \tilde g(u)  \left( \tN(u),\tN(u) \right) = 1.
\end{equation}
where $\tilde g(u) $ is the scalar product corresponding to the metric $\tilde g$ taken along $\tilde S(w)$. Using the expression for $\tilde g$, we get
$$ 
\tilde g (u)(s,\phi) - \tilde g(s,\phi) = \left ( \te^{1 - \beta} \, L_{s,\phi} \, u + Q^2_{s,\phi}(u) \right) \tilde g(s,\phi) + \left(  \te \, L_{s,\phi} \, u  + \frac{1}{\cosh^2 s} \, Q^2_{s,\phi}(u) \right) dx_3^2 , 
$$ 
and from \eqref{CoefNorm}, we deduce that
$$ \begin{array}{l} 
 \nu(u) = \te^{1 - \beta} \, L_{s,\phi} \, u + Q^2_{s,\phi}(u) \\[3mm]
 \varkappa(u) = - \dfrac{1}{\te^2 \cosh^2 s} \partial_s u + \dfrac{1}{\cosh^2 s} \, L_{s,\phi} \, u + \dfrac{\te^{1-\beta}}{\te^2 \cosh^2 s} \, Q^2_{s,\phi}(u) 
 \\[3mm]
 \mu(u) = - \dfrac{1}{\te^2 \cosh^2 s} \partial_\phi u + \dfrac{1}{\cosh^2 s} \, L_{s,\phi} \, u + \dfrac{\te^{1-\beta}}{\te^2 \cosh^2 s} \, Q^2_{s,\phi}(u) ,
 \end{array} 
$$
\newline where we used the fact that $ dx_3^2(\tilde T_s,\tN) $ and $ dx_3^2(\tilde T_\phi, \tN) $ can be bounded by a constant times $\te$ and the estimate 
$$ 
\left| \tilde g(\tilde T_p, \tilde T_q) - \te^2 \cosh^2 s \ \delta_{pq} \right| \leq c \, \e^3, 
$$ 
where $\tilde T_p$ and $\tilde T_q$ stand for $\tilde T_s$ or $\tilde T_\phi$. We can write the normal vector field to $\tilde S(w)$ in the form
\begin{multline*} 
 \tN(u) = \tN - \frac{1}{\te^2 \cosh^2 s} \left( \partial_s u \, \tilde T_s + \partial_\phi u \, \tilde T_\phi \right) \\
 + \left[ \te^{1 - \beta} \, L_{s,\phi} \, u + Q^2_{s,\phi}(u) \right]^\bot + \left[ \frac{\te}{\cosh s} \, L_{s,\phi} \, u + \frac{\te^{- \beta}}{ \cosh s} \, 
 Q^2_{s,\phi}(u) \right]^T 
\end{multline*}
where $[*]^\bot$ and $[**]^T$ denote a normal and a tangent vector fields of norm $*$ and $**$.

\medskip

We denote $ \tilde \nabla^{\te}(u) $ and $\tilde \Gamma_{ij}^k(u)$ the Levi-Civita connection and the Cristoffel symbols corresponding to the metric $\tilde g$ and taken along the surface $\tilde S(w)$. Then, we have
$$ 
\tilde \Gamma_{ij}^k(u) = \tilde \Gamma_{ij}^k + L_{s,\phi} \, u + Q^2_{s,\phi}(u) ,
$$
 \begin{multline*} 
  \tilde \nabla^{\te}_{\partial_p}\partial_q (u) = \tilde \nabla^{\te}_{\partial_p}\partial_q + \partial_p \partial_q u \, \tN + 
  \partial_p u \, \partial_q \tN  + \partial_q u \, \partial_p \tN + u \, \partial_p \, \partial_q \tN \\[3mm]
  + \te^2 \cosh^2 s \, L_{s,\phi} \, u +  \te \cosh s \, Q^2_{s,\phi}(u)
 \end{multline*}
where $\partial_p$ and $\partial_q$ stand for $\partial_\phi$ or $\partial_s$. This allows us to find the second fundamental form of the surface $\mathcal S_n(w)$ :
$$ 
\left( \tilde{\mathfrak h}_{\te}(u) \right)_{pq} = \tilde g(u) \left( \nabla^{\te}_{\partial_p} \partial_q(u), \tN(u) \right).  
$$
Note that
$$ 
\ \left| \tilde g(\partial_p \, \tN, \partial_q \, \tN) - \frac{1}{\cosh^2 s} \, \delta_{pq} \right| \leq c \, \te. 
$$ 
Putting all the estimates together, we obtain
\begin{align*}
  \tilde{\mathfrak h}_{\te}(u)(s,\phi) & = \tilde{\mathfrak h}_\e(s,\phi) + \left( \begin{array}{cc} \partial_s^2 u & \partial_s \partial_\theta u \\ 
  \partial_s \partial_\phi u & \partial_\phi^2 u \end{array} \right) \\[3mm] 
  &- \frac{u}{\cosh^2 s} \left( \begin{array}{cc} 1 & 0 \\ 0 & 1 
  \end{array} \right) + \tanh s \left( \begin{array}{cc} - \partial_s u & \partial_\phi u \\ \partial_\phi u & \partial_s u 
  \end{array} \right) \\[3mm]
  & + \left( \te^{2 - \beta} \, L_{s,\phi} \, u + \te^2 \cosh^2 s \, L_{s,\phi} \, u + \frac{1}{\te \cosh^2 s} \, Q^2_{s,\phi}(u) \right) \tilde{\mathfrak 
  h}_0(s,\phi)
 \end{align*}
where $\tilde{\mathfrak h}_0$ is a bounded symmetric 2-form, which does not depend on $\te$. On the other hand, the first fundamental form $\tilde{\mathfrak g}_{\te}(u)$, which corresponds to the metric induced on $ \mathcal S_n(w) $ from $\tilde g$, satisfies
$$ 
\tilde{\mathfrak g}_{\te}(u) = \tilde{\mathfrak g}_{\te} - 2 u \, \tilde{\mathfrak h}_{\te} + Q^2_{s,\phi}(u) .
$$
This yields
\begin{align*}
 \frac{\mathrm{det}(\tilde{\mathfrak g}_{\te}(u))}{\mathrm{det}(\tilde{\mathfrak g}_{\te})} \tilde{\mathfrak g}_{\te}^{-1}(u) & = \tilde{\mathfrak 
 g}_{\te}^{-1} + \frac{2 \e u}{\te^4 \cosh^4 s} \left( \begin{array}{cc} 1 & 0 \\ 0 & -1 \end{array} \right) \\[3mm]
 & + \left( \frac{1}{\te \cosh^4 s} \, L_{s,\phi} u + \frac{1}{\te^4 \cosh^4 s} Q^2_{s,\phi} (u) \right) \tilde{\mathfrak g}_0
\end{align*}
\newline where $\tilde{\mathfrak g}_0$ is a bounded $2$-form. Going back to $ w = 2 \, u  $, we obtain
\begin{equation} \label{MCcatBrid}
 \begin{array}{ll} \mathcal H(w) = \,
  & \mathcal H(0) + \dfrac{1}{2} \, \dfrac{1}{\te^2 \cosh^2 s} \left( \partial_s^2 + \partial_\phi^2 + \dfrac{2}{\cosh s} \right) \, w \\[5mm] 
  & + \left( 1 + \dfrac{\te^{- \beta}}{\cosh^2 s} \right) \, L_{s,\phi} \, w + \dfrac{1}{\te^3 \cosh^4 s} \, Q^2_{s,\phi}(w) + \dfrac{1}{\te^4 \cosh^4 s} \, 
  Q^3_{s,\phi}(w) .\\[5mm]
  \end{array}
\end{equation} 

\medskip

\subsection{Mean curvature of the perturbed bridges}

In the region 
$$ 
\Omega_{cat}^m = \lambda_m \{ \zeta \in \mathbb C_- \, : \, \e < |\zeta| < 1/2 \, \e^{2/3} \} \, ,
$$ 
the surface  $ \tilde S_n(w) $ is parametrized as the image by the mapping $\Lambda_m $ of the normal graph about $ C_{\e} $ for the function $ \bar u = a \, \bar w $, scaled by the factor $\frac{1}{2}$. We suppose that
\begin{equation} \label{CondNormw} 
 \left\| \dfrac{\bar w}{\e \cosh \sigma} \right\|_{ \mathcal C^2( (-\sigma_{*}, \sigma_{*}) \times [ \pi/2, 3 \pi/2 ] )} \leq 1, \quad \e \cosh \sigma_* = 1/2 
 \, e^{2/3}
\end{equation}
Our goal is to calculate the mean curvature of $ S(\bar w)$ with respect to the metric 
$$ 
g_m = a^2(d\zeta^2 + b^2  d\xi_3^2 ).
$$ 
The computation is very similar to the one we have done in the previous paragraph and we only need to change several estimates. The scalar product along $ S(\bar w)$ satisfies
$$ 
g_m(\bar u)(\sigma,\theta) - g_m(\sigma,\theta) = \left( L_{\sigma,\theta} \, \bar u + Q^2_{\sigma,\theta}(\bar u) \right) \, g_m(\sigma,\theta) + \left( \e L_{\sigma,\theta} \, \bar u + \frac{1}{\cosh^2 \sigma} \, Q^2_{\sigma,\theta}(\bar u) \right) \, d\tau^2 .
$$
Then, the normal vector field to $\mathcal S_n(\bar w)$ can be written as
\begin{multline*} 
 \mathcal N(\bar u) = \mathcal N - \frac{1}{\e^2 \cosh^2 \sigma} \left( \partial_\sigma \bar u \, T_\sigma + \partial_\theta \bar u \, T_\theta \right) \\
 + \left[ L_{\sigma,\theta} \, \bar u + Q^2_{\sigma,\theta}(\bar u) \right]^\bot + \left[ \frac{\e}{\cosh \sigma} \, L_{\sigma,\theta} \, \bar u + \frac{1}{\e 
 \cosh \sigma} \, Q^2_{\sigma,\theta}(\bar u) \right]^T ,
\end{multline*}
and the components of the Levi-Civita connection are
\begin{multline*} 
 \nabla^\e_{\partial_\alpha}\partial_\beta (\bar u) = \nabla^\e_{\partial_\alpha}\partial_\beta + \partial_\alpha \partial_\beta \bar u \,\mathcal N + 
 \partial_\alpha \bar u \, \partial_\beta \mathcal N + \partial_\beta \bar u \, \partial_\alpha \mathcal N + \bar u \, \partial_\alpha \, \partial_\beta 
 \mathcal N  \\[3mm]
  + \e \cosh \sigma \, L_{\sigma,\theta} \, \bar u + Q^2_{\sigma,\theta}(\bar u).
 \end{multline*}
The first and the second fundamental forms satisfy~:
\begin{align*}
 \frac{\mathrm{det}(\mathfrak g_\e(\bar u))}{\mathrm{det}(\mathfrak g_\e)} \mathfrak g_\e^{-1}(\bar u) & = \mathfrak g_\e^{-1} + \frac{\e \bar u}{\e^4  \cosh  
 ^4 \sigma} \left( \begin{array}{cc} 1 & 0 \\ 0 & -1 \end{array} \right) \\[3mm]
 & + \left( \frac{1}{\e \cosh^4 \sigma} \, L_{\sigma,\theta} \, \bar u + \frac{1}{\e^4 \cosh^4 \sigma} \, Q^2_{\sigma,\theta}(\bar u) \right)  \mathfrak g_0 ,
\end{align*}

\begin{align*}
 \mathfrak h_\e(\bar u)(\sigma,\theta) & = \mathfrak h_\e(\sigma,\theta) + \left( \begin{array}{cc} \partial_\sigma^2 \bar u & \partial_\sigma \partial_\theta 
 \bar u \\ 
 \partial_\sigma \partial_\theta \bar u & \partial_\theta^2 \bar u \end{array} \right) - \frac{\bar u}{\cosh^2 \sigma} \left( \begin{array}{cc} 1 & 0 \\ 0 & 
 1 \end{array} \right) \\[3mm] 
 & + \tanh \sigma \left( \begin{array}{cc} - \partial_\sigma \bar u & \partial_\theta \bar u \\ \partial_\theta \bar u & \partial_\sigma \bar u 
 \end{array} \right) + \left( \e \cosh \sigma \, L_{\sigma,\theta} \, \bar u + \frac{1}{\e \cosh^2 \sigma} \, Q^2_{\sigma,\theta}(\bar u) \right) \mathfrak 
 h_0(\sigma,\theta),
\end{align*}
where $\mathfrak h_0$ and $\mathfrak g_0$ are bounded symmetric 2-forms which do not depend on $\e$. Finally, going back to $ \bar w = \frac{1}{a} \, \bar u $, we get

\begin{equation} \label{MCcatNeck} 
 \begin{array}{ll} \mathcal H(\bar w) & =  \mathcal H(0) + \dfrac{1}{\e^2 \cosh^2 \sigma} \left( \partial_\sigma^2 + \partial_\theta^2 + 
 \dfrac{2}{\cosh^2 \sigma} \right) \bar w \\[5mm] 
 & + \dfrac{1}{\e \cosh \sigma} L_{\sigma,\theta} \, \bar w +  \dfrac{1}{\e^3 \cosh^4 \sigma} \, Q^2_{\sigma,\theta}(\bar w) + \dfrac{1}{\e^4 \cosh^4 \sigma} 
 \, Q^3_{\sigma,\theta}(\bar w). \end{array}
\end{equation}  

\medskip

\subsection{Mean curvature of the perturbed "gluing regions"}

Let $M$ be a smooth hypersurface in a smooth Riemannian manifold endowed with a metric $g$. Take $w$ a small smooth function and $V_1$ and $V_2$ two smooth vector fields on $M$. Let $\mathcal H^i(w)$ denote the mean curvature of the hypersurfaces obtained by perturbation of $M$ in the direction $ V_i$, $i= 1,2$. We have the following result:
\begin{lemma} \label{LMC} The following relation holds
$$ \left. D \mathcal H^2 \right|_{w=0}(v) = \left. D \mathcal H^1 \right|_{w=0} (\tau \, v) + g(\nabla_M \mathcal H(0), \mathcal T) $$
where $ \tau = \frac{|V_2^\bot|}{|V_1^\bot|} $, and $ \mathcal T = V_2^T - \tau \, V_1^T$, and where $V_i^\bot$ and $ V_i^T $ denote the orthogonal projections of $V_i$ on the normal and the tangent bundle of $M$.
\end{lemma}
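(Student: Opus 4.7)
The plan is to reduce the general statement to the well-understood case of a purely normal perturbation by decomposing each $V_i$ into its normal and tangential parts and tracking how a tangential perturbation behaves as a reparameterization of $M$ to first order.

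First, fix a unit normal $\nu$ along $M$ and write $V_i = f_i\,\nu + V_i^T$ where $f_i = g(V_i,\nu)$, so $|V_i^\bot| = |f_i|$ and in particular $\tau = f_2/f_1$. For a small scalar function $w$ on $M$, denote the perturbed surface by $M_w^i = \{\exp_p(w(p)V_i(p)) : p\in M\}$ and recall that $\mathcal H^i(w)$ denotes its mean curvature, viewed as a function on $M$ via the parametrization $p\mapsto\exp_p(w(p)V_i(p))$. The crucial observation is that the tangential component $V_i^T$ generates, to first order in $w$, a diffeomorphism $\phi_w^i\colon M\to M$ satisfying $\phi_w^i(p) = p + wV_i^T(p) + O(w^2)$, and moreover
\[
\exp_p(wV_i(p)) \;=\; \exp_{\phi_w^i(p)}\bigl(wf_i(\phi_w^i(p))\,\nu(\phi_w^i(p))\bigr) + O(w^2),
\]
so the image $M_w^i$ coincides, up to $O(w^2)$, with the purely normal perturbation of $M$ in direction $wf_i\,\nu$.

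Second, let $\mathcal H^\perp(\psi)$ denote the mean curvature function on $M$ obtained by perturbing normally by $\psi\nu$. The identity above yields
\[
\mathcal H^i(w)(p) \;=\; \mathcal H^\perp(wf_i)\bigl(\phi_w^i(p)\bigr) + O(w^2).
\]
Differentiating at $w=0$ and using $\mathcal H^\perp(0)=\mathcal H(0)$ together with $\partial_w\phi_w^i|_{w=0} = V_i^T$, one obtains
\[
D\mathcal H^i\bigr|_{0}(v) \;=\; D\mathcal H^\perp\bigr|_{0}(vf_i) \;+\; v\,g\bigl(\nabla_M\mathcal H(0),V_i^T\bigr).
\]

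Finally, since $vf_2 = (\tau v)f_1$, the first term satisfies $D\mathcal H^\perp|_0(vf_2) = D\mathcal H^\perp|_0((\tau v)f_1)$, and rewriting this via the $i=1$ identity gives $D\mathcal H^\perp|_0(vf_2) = D\mathcal H^1|_0(\tau v) - \tau v\,g(\nabla_M\mathcal H(0),V_1^T)$. Substituting into the $i=2$ identity and collecting the tangential contributions produces
\[
D\mathcal H^2\bigr|_{0}(v) \;=\; D\mathcal H^1\bigr|_{0}(\tau v) + g\bigl(\nabla_M\mathcal H(0),\,V_2^T - \tau V_1^T\bigr),
\]
which is the claimed formula with $\mathcal T = V_2^T - \tau V_1^T$. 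The main delicate point is justifying the first-order equivalence between a tangential perturbation and a diffeomorphism of $M$: one must check that the terms coming from the ambient connection and from the fact that $\exp_p(wV_i^T)$ does not stay exactly on $M$ contribute only at order $O(w^2)$, which is standard but requires a careful expansion of the exponential map in normal coordinates adapted to $M$.
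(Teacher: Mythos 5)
The proposal is correct, but it takes a genuinely different route from the paper. The paper's proof is a single application of the implicit function theorem to the matching equation $p + t\,V_1(p) = q + s\,V_2(q)$ on $M\times\mathbb R\times M\times\mathbb R$: solving for $(p,t)=(\Phi(q,s),\Psi(q,s))$, the normal and tangential projections of the linearized equation at $s=0$ yield $\partial_s\Psi = \tau$ and $\partial_s\Phi = V_2^T - \tau V_1^T = \mathcal T$ directly, and the chain rule then gives the relation between $D\mathcal H^1$ and $D\mathcal H^2$. You instead introduce the purely normal variation $\mathcal H^\perp$ as an intermediate reference and apply the ``oblique vs.\ normal'' reduction (the Pacard--Rosenberg special case, which the paper cites as motivation) twice, once for each $V_i$, before eliminating $D\mathcal H^\perp$. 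The paper's approach is more economical and obtains $\tau$ and $\mathcal T$ in one stroke; your factorization is arguably more transparent, since each step is the already-understood special case, and your use of exponential maps rather than the ambient-linear translation $p+tV_1(p)$ is the cleaner formulation in a curved ambient. Both arguments gloss over the same analytic point, which you correctly flag: the second-order agreement of the perturbed surfaces (or equivalently the smooth dependence needed for the implicit function theorem) in the $C^2$ topology, which is what licenses comparing mean curvatures. One small remark: your last display reproduces the paper's statement as written, but the tangential term should of course carry the factor $v$ (that is, $g(\nabla_M\mathcal H(0), v\,\mathcal T)$), as your own computation two lines earlier shows and as the paper's final displayed identity in the proof has; this is a typo in the lemma statement, not an error in either proof. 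Also note that your identification $\tau = f_2/f_1$ coincides with $|V_2^\perp|/|V_1^\perp|$ only when $f_1$ and $f_2$ have the same sign, a tacit hypothesis of the lemma that holds in all applications in the paper.
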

\begin{proof}
This lemma is a simple generalisation of the result proven in \cite{Pac-Ros} where the case when one of the vector fields $V_i$ is a unit normal to $M$ is treated. The proof consists of applying the implicit function theorem to the equation
$$ 
p + t \, V_1(p) = q + s \, V_2(q), \quad p, q \in M, \quad t,s \in \mathbb R ,
$$
expressing locally $p$ and $t$ as functions of $q$ and $s$:
$$ 
p = \Phi(q,s) \quad \mbox{and} \quad t = \Psi(q,s) ,
$$
with $ \Phi(q,0) = q $ and $ \Psi(q,0) = 0$. We obtain then
$$ 
\partial_s \Psi (\cdot,0) [V_1]^\bot = [V_2]^\bot \quad \mbox{and} \quad \partial_s \Phi(\cdot,0) = [V_2]^T - \partial_s \Psi(\cdot,s) [V_1]^T .
$$
Moreover, we have
$$ 
\left. D \mathcal H^1 \right|_{w=0}(\partial_s \Psi(\cdot,0) \, v) + \nabla \mathcal H(0) \cdot \partial_s \Phi \, v = \left. D \mathcal H^2 \right|_{w=0}(v) ,
$$
and the result follows.
\end{proof}

\medskip

Now let us return to the surface $\tS$. Making use of the proof of the proposition 3.5, one can see that in the region $ \Omega_{glu}^0$
the components of $ \nabla^{\tilde g} \mathcal H $ are bounded by a constant times $\te^{1/2-\beta}$. Moreover, using the expression obtained in the lemma 3.1 for the normal vector field to the surface parametrized as a graph of the function $ \tilde \Upsilon $, we get
$$ 
[ \tilde \Xi_{\te} ]^N / [\partial_{x_3}]^N = 1 + \mathcal O(\te) \quad \mbox{and} \quad [ \tilde \Xi_{\te}  ]^T \sim [\tilde \partial_{x_3}]^T = \mathcal O(\te) .$$
Therefore, lemma \eqref{LMC} with $ V_1 = \partial_{x_3} $ and $ V_2 = \tilde \Xi_{\te} $ yields
$$ 
\mathcal H(w) = \mathcal H(0) + \Delta ( B \, w ) + L^{\gamma_0}_z \, w + {\te}^{-1} \, Q_z^{2,\gamma_0}(w) + {\te}^{-2} Q_z^{3,\gamma_0} .
$$
in $\Omega_{glu^0}$. Similarly, in $ \Omega_{glu}^m $ taking $ V_1 = \partial_{\xi_3} $ and $ V_2 = \bar \Xi_\e $, and using the fact that the components of the gradient of $\mathcal H$ are bounded by a constant times $\e^{-1/3-\beta}$ and the fact that 
$$ 
[ \bar \Xi_{\e} ]^N / [\partial_{\xi_3}]^N = 1 + \mathcal O(\e^{4/3}) \quad \mbox{and} \quad [ \bar \Xi_{\e}  ]^T \sim [\tilde \partial_{\xi_3}]^T = \mathcal O(\e^{4/3}) ,
$$ 
we get
\begin{multline*} 
 \mathcal H(w) = \mathcal H(0) + \Delta ( B \, w ) + \e^{-2/3 - \beta} \, L^{\gamma_m}_z \, w \, + \e^{-5/3 - \beta} \, Q_z^{2,\gamma_m}(w) + \e^{-8/3 - \beta} \, Q_z^{3,\gamma_m}(w),
\end{multline*}
for all positive $\beta \in (0,1)$.

\section{Linear analysis in the puncture disk}

We would like to analyse the Laplace operator subject to the Robin boundary data:
\begin{equation} \label{ProbLin}
 \left\{ 
\begin{array}{l}
\Delta \,  w =  f \quad \mbox{in} \quad D^2 \setminus \{0\} \quad (\mbox{or} \ D^2 ) \\[3mm]
\partial_r w - w = 0 \quad \mbox{on} \quad S^1 \setminus \{ z_1, \ldots, z_n \}
\end{array} \right.
\end{equation}
\newline where $f$ is a given function whose regularity and properties will be stated shortly. In what follows we suppose that we work in the domain $D^2 \setminus \{ 0 \}$. The case of the entire open disk $D^2$ can be treated in an analogous manner with certain simplifications. 

\medskip

First of all, we take $f$ even with respect to the angular variable and, for a given $n \geq 2 $, invariant under rotations by the angle $\frac{2 \pi}{n}$. With this assumption, the operator associated to (\ref{ProbLin}) does not have any bounded kernel and hence, the solvability of (\ref{ProbLin}) follows from classical arguments. For example, if $ f \in  {\mathcal C}^{0, \alpha} (\overline{ D^2})$ we get the existence of $ w \in {\mathcal C}^{2, \alpha} (\overline{D^2})$ solution $ w $ of (\ref{ProbLin}). Moreover,
$$ 
| w \|_{\mathcal C^{2,\alpha}(\overline{D^2})} \leq C \left( \|  w \|_{\mathcal C^0(\overline{D^2})} + \| f \|_{\mathcal C^{0,\alpha}(\overline{D^2})} \right) 
$$ 
We would like to understand what happens if we allow $f$ to have singularities at $0$ and/or $z_m$, $m = 1, \ldots, n$.

\medskip

We define the weighted spaces we will work in. As before we set
\[
\gamma (z) = |z| \, \overset{n}{\underset{m=1} \Pi} |z-z_m|,
\]
and we assume that we are given $ \nu  \in \mathbb R$. We say that a function $u \in L^{\infty}_{loc}(D^2) $ belongs to the space $L^{\infty}_\nu(D^2)$ if
$$ 
\| \gamma^{-\nu} u \|_{L^{\infty}(D^2)} < \infty .
$$
Let us use the notation $ D^2_* $ for the open punctured disc $ D^2 \setminus \{0\} $. The space $\mathcal{C}^{k,\alpha}_{\nu} (D^2_*)$ is defined to be the space of functions $u \in {\mathcal C}^{k ,\alpha}_{loc}(D^2_*)$ for which the following norm is finite
\[
\begin{array}{rllll}
\| u \|_{{\mathcal C}^{k ,\alpha}_{\nu} (D^2_*)} & := & \| \gamma^{-\nu} \, u \|_{\mathcal C^{k , \alpha} (D_*,  \gamma^{-2} \, g_{eucl})}.
\end{array}
\]

\medskip

Observe that, on the right hand side, we do not use the Euclidean metric to calculate the gradient of a function but rather a singular metric $ \gamma^{-2} \, g_{eucl}$. As a consequence, a function $u$ belongs to ${\mathcal C}^{k ,\alpha}_{\nu} (D^2_*)$ if 
\begin{multline*} 
\underset{ z \in D^2_* }{ \mathrm{sup} } \left| \gamma^{-\nu} \, u \right| + \sum_{i = 1}^k \, \underset{ z \in D^2_* }{ \mathrm{sup}} \left| \gamma^{-\nu + i} \, \nabla^i u \right| + \\
\underset{ z,z' \in D^2_* }{\mathrm{sup}} \left\{ \dfrac{ |\gamma^{- \nu + k + \alpha}(z) \, \nabla^k u(z) - \gamma^{- \nu + k + \alpha}(z') \nabla^k u(z')|}{|z-z'|^{\alpha}} \right\} < \infty
\end{multline*}

\medskip

Like in the section 4, instead of the problem \eqref{ProbLin}, we can consider an equivalent problem defined in $ \overline{D^2} \setminus \{0,1\}$. Take the change of variables $ \, z \, \mapsto \, z^n $ and notice that
$$ 
|z|^2 \, \Delta(z) = n^2 \, |z|^{2n} \, \Delta(z^n) .
$$

\smallskip

We take a function $ F $ in $ \, D^2 \setminus \{ 0 \}, \, $ such that 
$$ 
F(z^n) = \dfrac{1}{n^2} \, |z|^{2 - 2n} f(z) .
$$

\smallskip

Consider the problem:
\begin{equation} \label{ProbLin01}
 \left\{ 
\begin{array}{l}
\Delta \, W =  F \quad \mbox{in} \quad D^2 \setminus \{ 0 \} \\[3mm]
\partial_r W - \dfrac{1}{n} W = 0 \quad \mbox{on} \quad S^1 \setminus \{ 1 \}
\end{array} \right.
\end{equation}

\medskip

We define the space $ L^{\infty}_{\nu_0,\nu_1}(D^2) $ as the space of functions $ U \in L^{\infty}_{loc}(D^2) $ for which
$$ 
\| |z|^{-\nu_0}|z-1|^{-\nu_1} \, U \|_{L^{\infty}(D^2)} < \infty .
$$

Notice, if we take  $ f \in \, L^{\infty}_{\nu-2}(D^2) \, $, then $ \, F \in L^{\infty}_{\nu/n-2,\nu-2}(D^2) $ and
$$ 
\| F \|_{ L^{\infty}_{\nu/n-2,\nu-2}(D^2)} = \frac{1}{n^2} \, \| f \|_{L^{\infty}_{\nu-2}(D^2)} .
$$

\begin{proposition} \label{LinSol0}
Assume that $ \nu \in (0, 1)$. Then, there exists a constant $ C > 0 $ and, for all $ n \geq 2 $, for all $ F $, such that $ |z|^{-\nu/n + 2} \, F \in  L^{\infty}(D^2) $, there exist a unique function $ \Psi_0 $ and a unique constant $c_0^*$, such that $ W_0 := \Psi_0 + n \, c_0^* $ is a solution to (\ref{ProbLin01}) and 
$$ 
\| \, |z|^{-\nu/n} \, \Psi_0 \, \|_{L^{\infty}( D^2) } + |c_0^*| \leq C \, \| \, |z|^{ - \nu/n + 2} \, F \, \|_{L^{\infty}(D^2_* )} .
$$ 
\end{proposition}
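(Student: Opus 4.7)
The approach is to reduce (\ref{ProbLin01}) to a family of one-dimensional ODEs by expanding in Fourier series in $\phi$. Writing $W(r,\phi) = \sum_k W_k(r) e^{ik\phi}$ and $F(r,\phi) = \sum_k F_k(r) e^{ik\phi}$, each mode satisfies
\[W_k'' + r^{-1} W_k' - k^2 r^{-2} W_k = F_k, \qquad W_k'(1) - \tfrac{1}{n} W_k(1) = 0.\]
The characteristic exponent of the Robin condition for the $k$-th mode is $|k| - 1/n$, which is uniformly bounded below by $1/2$ for $k \neq 0$ and $n \geq 2$, but degenerates in the zeroth mode as $n \to \infty$. This is precisely why the decomposition $W_0 = \Psi_0 + n c_0^*$ appears in the statement: the constant function $1$ is harmonic and satisfies $\partial_r 1 - n^{-1} = -n^{-1}$, so it is an approximate element of the kernel with defect of order $1/n$, and the naive inverse of the Robin--Laplacian has norm growing like $n$; the scalar $n c_0^*$ is designed to absorb this near-kernel direction.

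For each mode $k \neq 0$ I would use variation of parameters based on the pair $r^{|k|}, r^{-|k|}$ of homogeneous solutions, selecting the particular solution that is bounded at the origin and then determining the remaining multiple of $r^{|k|}$ from the Robin condition at $r=1$. Because $|k|-1/n$ stays away from zero, the resulting integral formulas yield bounds of the form $|W_k(r)| \leq C |k|^{-1}\, r^{\nu/n}\, \| |z|^{-\nu/n+2} F\|_\infty$, with a constant $C$ independent of $n$ and with a summable dependence on $|k|$ coming from the Green's function of the ODE; summing over $k \neq 0$ produces the part of $\Psi_0$ corresponding to nonzero frequencies.

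For the mode $k=0$ I would integrate $(rW_0')'=rF_0$ directly. Ruling out $\log r$ growth forces $\lim_{r\to 0} rW_0'(r)=0$, and then
\[W_0(r) = W_0(0) + \int_0^r \frac{1}{t}\int_0^t s F_0(s)\, ds\, dt,\]
with $W_0(0)$ uniquely determined by the Robin condition $W_0'(1) = W_0(1)/n$. Setting $n c_0^* := W_0(0)$ and $\Psi_0 := W_0 - n c_0^*$ makes $\Psi_0$ vanish at the origin, and the bounds on $|c_0^*|$ and on $\| |z|^{-\nu/n} \Psi_0\|_\infty$ follow from estimating the iterated integrals against the weighted bound $|F_0(s)| \leq \|F\|_\ast s^{\nu/n-2}$, together with the analogous estimate coming from the Robin condition. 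Uniqueness of the pair $(\Psi_0, c_0^*)$ follows from the requirement that $\Psi_0(0)=0$, which fixes the scalar $c_0^*$ unambiguously.

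The main obstacle is establishing that the resulting estimates on $c_0^*$ and $\Psi_0$ are uniform in $n$. The Robin condition $n W_0'(1) = W_0(1)$ forces $W_0$ itself to be of size $\sim n$ for unit-size forcing in the zeroth mode, and the delicate cancellation between the $n W_0'(1)$ term in the Robin condition and the double integration defining $W_0$ must be tracked carefully so that the growing piece is entirely captured by $n c_0^*$, leaving $\Psi_0$ with bounded weighted norm. A secondary technical point is passing from the mode-by-mode estimates to an $L^\infty$ bound on the full function $\Psi_0$, which requires the $|k|^{-1}$ summability noted above.
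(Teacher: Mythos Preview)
Your radial ($k=0$) analysis matches the paper's exactly: both write $\Psi_0^{rad}(r)=\int_0^r t^{-1}\int_0^t sF_0(s)\,ds\,dt$ and determine $c_0^*$ from the Robin condition, with the constant $n c_0^*$ absorbing the approximate kernel direction.

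For the mean-zero part, however, your route is genuinely different from the paper's. The paper does \emph{not} sum mode-by-mode estimates. Instead it builds $W_0^{mean}$ as a limit of solutions $W_{\epsilon,n}$ to mixed problems on annuli $A_\epsilon=D^2\setminus D^2(\epsilon)$ (Robin on $S^1$, Dirichlet on $S^1(\epsilon)$), and proves the weighted $L^\infty$ bound is uniform in $\epsilon$ and $n$ by a blow-up/contradiction argument: assuming the constant blows up, one rescales at a point where the supremum is nearly attained and passes to a limit, obtaining a nontrivial bounded solution of a homogeneous problem (on $D^2_*$, on $\mathbb R^2\setminus\{0\}$, or on an exterior domain, depending on where the bad points accumulate), which is then shown to be impossible. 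This PDE argument works directly in $L^\infty$ and needs no Fourier summability.

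Your approach has a concrete gap as written. You assert that the $k$-th mode satisfies $|W_k(r)|\le C|k|^{-1}r^{\nu/n}\|\,|z|^{-\nu/n+2}F\|_\infty$ ``with a summable dependence on $|k|$'', but $\sum_{k\ne 0}|k|^{-1}$ diverges, so this does \emph{not} give an $L^\infty$ bound on $\sum_k W_k e^{ik\phi}$. A more careful variation-of-parameters computation with the pair $r^{\pm|k|}$ actually yields
\[
|W_k(r)|\;\le\;\frac{C\,r^{\nu/n}}{|k|\,(|k|-\nu/n)}\,\bigl\|\,|z|^{-\nu/n+2}F\bigr\|_\infty,
\]
which is $O(|k|^{-2})$ uniformly in $n\ge 2$ (since $|k|-\nu/n\ge 1-\nu/2>0$), and this \emph{is} summable. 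So your strategy can be repaired, but you must sharpen the ODE estimate; the $|k|^{-1}$ claim is the missing idea. Even once fixed, note that controlling an $L^\infty$ norm by summing absolute values of Fourier modes is a crude step that happens to succeed here only because the Green's function decay is quadratic; the paper's blow-up method is more robust and is reused in the subsequent propositions.
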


\begin{proof}

First, let us assume that $F$ does not depend on the angular variable $ \phi $. In this case, (\ref{ProbLin01}) reduces to a second order ordinary differential equation which can be solved explicitly. 
\begin{multline*} 
 \Psi_0^{rad}(r) = \int_0^r \frac{1}{s} \int_0^s t \, F(t) \, dt \, ds, \quad W_0^{rad} = \Psi_0^{rad} + n \, c_0^* \\
 c_0^* = - \int_0^1 s \, F(s) \, ds + \frac{1}{n} \, \int_0^r \frac{1}{s} \int_0^s t \, F(t) \, dt \, ds
\end{multline*}
With little work, one checks that the result is indeed correct in this spacial case. 

\medskip

Furthermore, we claim that, if we restrict our attention to the space of functions for which 
\[
\int_{S^1} F(r e^{i\phi}) \, r \, d\phi =0
\]
for all $r \in (0,1)$, then there exists a function $W_0^{mean}$ such that 
$$ \| |z|^{-\nu/n} W_0^{mean} \|_{L^{\infty}(D^2)} \leq C \, \| |z|^{-\nu/n + 2} F \|_{L^{\infty}(D^2)} $$
for a constant $ C $ independent of $n$. We construct $W_0^{mean}$ as a limit of solutions to the Poisson's equation in annulus-type domains with mixed boundary date. 

\medskip

More precisely, take $\epsilon \in (0,1)$ and let us denote $A_\epsilon$ the annulus $D^2 \setminus D^2(\epsilon)$. For a fixed $n$ let $W_{\epsilon,n}$ be the solution to the problem
\begin{equation} \label{ProbEps}
 \left\{ \begin{array}{l}
 \Delta W_{\epsilon,n} = F \quad \mbox{in} \quad A_\epsilon, \\[3mm]
 \partial_r W_{\epsilon,n} - \frac{1}{n} W_{\epsilon,n} = 0 \quad \mbox{on} \quad S^1, \quad 
 W_{\epsilon,n} = 0 \quad \mbox{on} \quad S^1(\epsilon).
 \end{array} \right.
\end{equation}

There exists a constant $C(\epsilon,n)$ which depends on $\epsilon$ and $n$ and such that
$$ \| W_{\epsilon,n} \|_{L^\infty(A_\epsilon)} \leq C(\epsilon,n) \, \| F \|_{L^\infty(A_\epsilon)} $$
Changing the constant $C(\epsilon,n)$, we can rewrite this as follows
\begin{equation} \label{Contin}
 \| |z|^{-\nu/n} W_{\epsilon,n} \|_{L^\infty(A_\epsilon)} \leq C(\epsilon,n) \, \| |z|^{ - \nu/n + 2} F \|_{L^\infty(A_\epsilon)} 
\end{equation}
If the constant $ C(\epsilon,n) = C(n)$ didn't depend on $\epsilon$, then for every $\epsilon_0 \in (0,1)$, and for all $\epsilon < \epsilon_0$ we would have
$$ \| W_{\epsilon,n} \|_{L^\infty \left( A_{\e_0/2} \right)} \leq C(n) \, \| |z|^{ - \nu/n + 2} F \|_{L^\infty(D^2_*)}. $$
Then, by elliptic regularity theory, changing the constant $C(n)$ if necessary, we would have
$$ \| \nabla W_{\epsilon,n} \|_{L^\infty(A_{\epsilon_0})} \leq  C(n) \, \| |z|^{ - \nu/n + 2} F \|_{L^\infty(D^2_*)} $$
Thus, when $\epsilon$ tends to $0$, the sequence $W_{\epsilon,n}$ would admit a subsequence converging on compact sets of $D^2_*$ to a function $W_n$, a solution of \eqref{ProbLin01} for a fixed $n$, such that
$$ \| |z|^{-\nu/n} W_n \|_{L^\infty(D^2)} \leq  C(n) \, \| |z|^{ - \nu/n + 2} F \|_{L^\infty(D^2)} $$

The fact that the constant $ C(\e,n) $ doesn't depend on $ \epsilon $ can be proven by an argument by contradiction. We suppose, that there exists a sequence of parameters $ \epsilon_j $ and a sequence of points $z_j$ such that 
\begin{align*} 
 \| |z|^{-\nu/n} \, W_{j,n} \|_{L^\infty(A_j)} \leq 1, \quad & W_{j,n}(z_j) = |z_j|^{\nu/n}, \\
  \mbox{and} \quad & \Delta \, W_{j,n} = F_{j,n}, \quad \| |z|^{2-\nu/n} F_{j,n} \|_{L^{\infty}(A_j)} \underset{j \rightarrow \infty}{\rightarrow} 0
\end{align*}
where $\, W_{j,n} := \dfrac{1}{C(\epsilon_j,n)} \, W_{\epsilon_j,n}$, $\, F_{j,n} := \dfrac{1}{C(\epsilon_j,n)} \, F \,$ and $A_j = A_{\epsilon_j}$.

\medskip

We suppose first that the sequence $ z_j $ converges to a point $z_\infty \in D^2_* $.  We denote 
$$ 
\mathcal W_{j,n}(z) = W_{j,n} \left( |z_j| \, z \right) \, |z_j|^{-\nu/n} ,
$$
then, for every $j$, we have 
$$ 
\mathcal W_{j,n} \left( z_j / |z_j| \right) = 1. 
$$ 
The sequence $\mathcal W_{j,n}$ admits a subsequence converging on compact sets to a function $\mathcal W_n$ which is a solution to
$$ 
\left\{ \begin{array}{l} 
    \Delta \, \mathcal W_n = 0 \quad \mbox{in} \quad D^2_* \\[3mm]
    \partial_r \mathcal W_n - \frac{1}{n} \mathcal W_n = 0 \quad \mbox{on} \quad S^1
   \end{array} \right. .
$$
Moreover, we have $ \left| \mathcal W_n(z) \right| \leq |z|^{\nu/n} $ and $ \mathcal W_n \left( \frac{z_\infty}{|z_\infty|} \right) = 1 $. Using the fact that $\mathcal W_n$ has no radial part and that the problem \eqref{ProbLin01} has no bounded kernel, we get a contradiction.

\medskip

When the sequence of points $z_j$ tends to $0$ at the same time as $ \frac{|z_j|}{\epsilon_j} \underset{j \rightarrow \infty}{\rightarrow} 0 $ we obtain a sequence of functions $\mathcal W_{j,n}$, which admits a subsequence converging on compact sets to a function $\mathcal W_n$ which is a solution to the problem
$$ \Delta \, \mathcal W_n = 0, \quad \mbox{in} \quad \mathbb R^2 \setminus \{0\}, \quad \left| \mathcal W_n \right| \leq c \, |z|^{\nu/n}, $$
which implies $W_n \equiv 0$ and contradicts the fact that $\mathcal W_{j,n}\left( \frac{z_j}{|z_j|} \right) = 1$ for all $j$. 

\medskip

It remains to deal with the case when $z_j \underset{j \rightarrow \infty}{\rightarrow} 0$ and $ \frac{|z_j|}{\epsilon_j} \underset{j \rightarrow \infty}{\rightarrow} a $, where $a$ is a constant strictly greater than $1$. In this case $\mathcal W_{n,j}$ admits a subsequence converging on compact sets to a function $\mathcal W_n$, which is a solution to
$$ 
\left\{ \begin{array}{l} 
    \Delta \, \mathcal W_n = 0 \quad \mbox{in} \quad \mathbb R^2 \setminus D^2(a) \\[3mm]
    \mathcal W_n = 0 \quad \mbox{on} \quad S^1(a)
   \end{array} \right. 
$$
and such that $\left| \mathcal W_n \right| \leq c \, |z|^{\nu}$. Once again, this implies $\mathcal W_n \equiv 0$ and gives a contradiction. 

\medskip

Finally, the case when $z_j \underset{j \rightarrow \infty}{\rightarrow} 0$ and $ \frac{|z_j|}{\epsilon_j} \underset{j \rightarrow \infty}{\rightarrow} 1 $ doesn't happen. For every $j$ we have
$$ \left\{ \begin{array}{l} 
    \Delta \, W_{j,n} = F_{j,n} \quad \mbox{in} \quad \mathbb D^2( 2 \epsilon_j) \setminus D^2 \left( \epsilon_j \right) \\[3mm]
    W_{j,n} = 0 \quad \mbox{on} \quad S^1 \left( \epsilon_j \right)
   \end{array} \right. $$
Moreover, 
$$ \, \left| F_{j,n} \right| \leq \epsilon_j^{\nu/n-2} \quad \mbox{and} \quad  \left| W_{j,n} \right| \leq \epsilon_j^{\nu/n}. $$ 
Then in the subsets of $ \mathbb D^2( 2 \epsilon_j) \setminus D^2 \left( \epsilon_j \right) $ we have $ \left| \nabla W_{j,n} \right| \leq c \, \epsilon_j^{\nu/n-1} $. This implies that in the neighbourhood of $|z| = \epsilon_j$, we have
$$ \left| W_{j,n} \right| \leq C \, \epsilon_j^{\nu/n - 1} \left( |z| - \epsilon_j \right). $$
At $ z = z_j $ this yields $ \frac{|z_j|}{\epsilon_j} - 1 \geq C $,
which is not possible starting from a certain $j$.

\medskip

Similarly, we can prove that the constant $ C(n) $ in \eqref{Contin} does not depend on $n$. If it were not the case we could define a sequence of function $\tilde W_n$ and a sequence of points $z_n$, such that $\tilde W_n \left( z_n/|z_n| \right) = 1$. Then $\tilde W_n$ would admit a subsequence converging on compact sets to a function $\tilde W$, which is harmonic in a unit disk and has homogeneous Neumann boundary data. Using that $\int_{D^2} \tilde W \, dx_1 \, dx_2 = 0$, we get the contradiction.

\end{proof}

Let us fix a cut-off function $\chi$ defined in the unit disk $D^2$ which is identically equal to $1$ in a neighbourhood of $z=1$ and to $0$ in a neighbourhood of $z=0$. We define deficiency spaces
$$ \mathfrak D_n = \mathrm{span} \{ n \} \quad \mbox{and} \quad \mathfrak D_{\chi} = \mathrm{span} \{ \chi \} $$

\begin{proposition} \label{LinSol01}
 Assume that $ \nu \in (0, 1)$. Then, there exists a constant $ C > 0 $ and, for all $ n \geq 2 $, for all $ \, F \in L^{\infty}_{\nu/n -  
 2, \nu-2}(D^2) \, $ there exist a unique function $ \Psi \in L^\infty_{\nu/n, \nu}(D^2) $ and unique constants $c_0^*$ and $c_1^*$, such that $ 
 \, W := \Psi + n \, c_0^* + c_1^* \, \chi \, $ is a solution to (\ref{ProbLin01}) and such that
 $$ \| W \|_{L^{\infty}_{\nu/n,\nu}( D^2) \oplus \mathfrak D_n \oplus \mathfrak D_{\chi} } < C \, \|  F \|_{L^{\infty}_{\nu/n-2,\nu-2}(D^2)} $$ 
\end{proposition}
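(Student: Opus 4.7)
The plan is to extend Proposition~\ref{LinSol0} by allowing $F$ to carry an additional singularity of type $|z-1|^{\nu-2}$ at $z=1$, and to absorb the resulting obstruction through the cut-off deficiency $\chi$. I would proceed in the usual two steps: first prove uniqueness together with the a priori estimate, then construct the solution by a decomposition argument.

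For the a priori estimate I would argue by contradiction in the spirit of Proposition~\ref{LinSol0}. Assume there is a sequence $W_j = \Psi_j + n\, c_{0,j}^* + c_{1,j}^* \,\chi$ solving \eqref{ProbLin01} with source $F_j$ satisfying $\|\Psi_j\|_{L^{\infty}_{\nu/n,\nu}} + |c_{0,j}^*| + |c_{1,j}^*| = 1$ while $\|F_j\|_{L^{\infty}_{\nu/n-2,\nu-2}} \to 0$. Pick $z_j \in D^2_*$ at which $|z_j|^{-\nu/n}|z_j-1|^{-\nu}|\Psi_j(z_j)|$ is close to the supremum, introduce the natural scale $\delta_j := \min(|z_j|,|z_j-1|,1-|z_j|)$, and rescale the problem around $z_j$ by $\delta_j$. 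Up to extraction, the rescaled functions converge on compact sets to a bounded harmonic function $\Psi_\infty$ of growth at most $|z|^{\nu}$. Depending on the accumulation regime of $z_j$ (interior point, boundary point away from $\{1\}$, convergence to $0$, or convergence to $1$), one obtains a limiting problem on $\mathbb{R}^2$, a half-plane, the punctured plane, or a punctured half-plane, with appropriate homogeneous boundary data. The $n$-fold rotational symmetry and the $z\mapsto\bar z$ reflection symmetry, together with $\nu\in(0,1)$, rule out nontrivial bounded solutions once the deficiencies $n\,c_{0,j}^*$ and $c_{1,j}^*\chi$ have been subtracted, yielding $\Psi_\infty\equiv 0$ and contradicting the normalization.

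For existence I would decompose $F = (1-\tilde\chi)F + \tilde\chi F =: F_0 + F_1$, where $\tilde\chi$ is a smooth cut-off equal to $1$ near $z=1$ with support in a small neighbourhood of $z=1$ disjoint from $z=0$. Since $F_0 \in L^{\infty}_{\nu/n-2,0}$, Proposition~\ref{LinSol0} provides $W_0 = \Psi_0 + n\,c_0^*$ solving \eqref{ProbLin01} with source $F_0$. For $F_1$, which is supported near $z=1$, I would build a local particular solution by a conformal change of coordinates flattening the boundary near $z=1$ and solving the resulting model problem with $|z-1|^{\nu-2}$-weighted source explicitly via a polar-type Fourier/Poisson integral analysis; for $\nu\in(0,1)$ this yields a solution of the form $c_1^*\chi + \Psi_1$ with $\Psi_1 \in L^{\infty}_{\nu/n,\nu}$. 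After cutting off, $W_0 + W_1$ solves the problem up to a smooth residual whose $L^{\infty}_{\nu/n-2,\nu-2}$-norm can be made arbitrarily small by shrinking the support of the transition, and one closes the construction by a standard contraction/Neumann-series iteration based on the a priori estimate from the first step.

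The main difficulty I anticipate is keeping the constant $C$ independent of $n$. In the rescaling argument the two singularities $z=0$ and $z=1$ may interact, and the weight $|z|^{\nu/n}$ degenerates to $1$ as $n\to\infty$, so one has to choose the rescaling factor $\delta_j$ with some care and check that the limiting model problems carry no $n$-dependent constants. In particular, when $z_j\to 1$ the limit is a half-plane problem with essentially Neumann boundary condition (the $1/n$ coefficient in the Robin condition vanishes in the limit), and one needs to verify that no bounded harmonic function with growth $|z|^\nu$ and Neumann data exists once the $\chi$-deficiency has been subtracted.
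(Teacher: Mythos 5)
Your overall strategy — decompose $F=F_0+F_1$ with a cut-off near $z=1$, flatten that boundary point with the conformal map $\lambda(\zeta)=(1+\zeta)/(1-\zeta)$, solve the flattened problem (radial part explicitly, mean-zero part by a barrier/limiting argument), glue back, and close by a Neumann series — is the same skeleton as the paper's proof. The a priori estimate by rescaling is not in the paper (it builds the right inverse directly) but is a reasonable alternative.

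The genuine gap is in how you propose to close the construction. After replacing the Robin condition by the Neumann condition in the flattened local model near $z=1$, the function $\chi W_1$ satisfies $\partial_r(\chi W_1)=0$ on $S^1\setminus\{1\}$ rather than $\partial_r-\tfrac1n$; the residual is therefore the \emph{boundary} error $-\tfrac1n\,\chi W_1$, which your proposal never mentions. Moreover, your mechanism for making the residual small — ``shrinking the support of the transition'' — does not address this: the boundary mismatch lives where $\chi\equiv 1$, not in the transition zone, and shrinking the transition would in any case inflate $\nabla\chi$ and $\Delta\chi$ rather than shrink the error. The paper's mechanism is different and essential: the cut-off terms $2\nabla\chi\cdot\nabla W_1+W_1\Delta\chi$ are compactly supported away from both singular points and are simply folded into the source for $W_0$ (no iteration needed), while the $O(1/n)$ boundary mismatch is killed with the explicit corrector $h=\tfrac{|z|^2-1}{2n}\,\chi W_1$, which restores the Robin condition exactly and produces an interior residual $R_h(f)=\Delta h$ of operator norm $\le 1/n$. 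The smallness parameter in the Neumann series is thus $1/n$, not a shrinking support parameter, and this is precisely what gives a constant $C$ uniform in $n\geq 2$. Without identifying the Robin mismatch and supplying a corrector of this type, your construction does not close.
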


\begin{proof}

We take the conformal mapping
$$ \lambda: \, \mathbb C_- \longrightarrow D^2, \quad  \lambda(\zeta) = \frac{1+\zeta}{1-\zeta}. $$
which sends a half-disk in $\mathbb C_-$ centered at $0$ and of radius $ \rho \in (0,1) $ to the intersection of the unit disk $ D^2 $ with the disk of radius $ r_\rho = \frac{2\rho}{1-\rho^2}$ centered at $ c_\rho = 1 + \frac{2 \rho^2}{1 - \rho^2}$. For example, for $ \rho = \frac{1}{3} $, we get $ r_{\frac{1}{3}} = \frac{3}{4} \quad \mbox{and} \quad c_{\frac{1}{3}} = \frac{5}{4} $ and for $ a = \frac{1}{5} $, we  get $ r_{\frac{1}{5}} = \frac{5}{12} \quad \mbox{and} \quad c_{\frac{1}{5}} = \frac{13}{12}. $

\medskip

We define a cut-off function $ \bar \chi \in \mathcal C^{\infty}(\mathbb R^2)$, such that 
$$ \bar \chi(\zeta) = \bar \chi(|\zeta|), \quad \bar \chi \equiv 0 \quad \mbox{for} \quad |\zeta| \geq 1/3 \quad \mbox{and} \quad \bar \chi \equiv 1 \quad \mbox{for} \quad |\zeta| \leq 1/5 $$ 
and put $ \chi(z) = \bar \chi \left(| \lambda^{-1}(z) | \right) $. Then, we have $ \left. \partial_r \chi \right|_{r=1} = 0 $ and
$$ \chi(z) \equiv 0 \quad \mbox{for} \quad |z - 5/4| \geq 3/4 \quad \mbox{and} \quad \chi \equiv 1 \quad \mbox{for} \quad |z - 13/12| \leq 5/12, $$
 
We decompose
$$ 
F(z) = F_0(z) + F_1(z) = (1 - \chi(z)) \, F(z) + \chi(z) \, F(z). 
$$
Then, we have
\begin{multline*} 
 \| \, |z|^{ - \nu/n + 2} F_0 \, \|_{L^\infty(D^2)} \leq \| F \|_{L^\infty_{\nu/n-2,\nu-2}(D^2)}, \\[3mm] 
 \| \, |z-1|^{ - \nu + 2} F_1 \, \|_{L^\infty(D^2)} \leq \| F \|_{L^\infty_{\nu/n-2,\nu-2}(D^2)} 
\end{multline*} 

We define $ \overline F(\zeta) = F(\lambda(\zeta)) $. Remark that
$$ 
\Delta_z = \dfrac{ |1-\zeta|^2 }{4} \, \Delta_\zeta ,
$$
and consider the problem
$$ 
\left\{ \begin{array}{l} 
 \Delta \overline W_1 = \frac{4}{|1-\zeta|^2} \, \overline F_1(\zeta) \quad \mbox{in} \quad \mathbb C_- \cap D^2_*(1/3) \\[3mm]
 \partial_{\xi_1} \, \overline W_1 = 0 \quad \mbox{on} \quad \partial \mathbb C_- \cap D^2_*(1/3), \quad  \\[3mm]
 \overline W_1 = 0 \quad \mbox{on} \quad \mathbb C_- \cap \partial D^2_*(1/3)  \end{array} \right. .
 $$
We extend $\overline F_1$ by symmetry to $ D^2_*(1/3) $ and consider the problem
\begin{equation} \label{ProbLin1} 
 \left\{ \begin{array}{l} \Delta \overline W_1 = \hat F_1 \quad \mbox{in} \quad D^2_*(1/3) \\[3mm]
 \overline W_1 = 0 \quad \mbox{on} \quad S^1(1/3) \end{array} \right.
\end{equation} 
where $\hat F_1 = \frac{4}{|1-\zeta|^2} \, \overline F_1(\zeta)$. Automatically, the restriction of $ \overline W_1 $ to $ \, \mathbb C_- \cap D^2_*(1/3) \,$ satisfies  $ \, \partial_{\xi_1} \overline W_1 = 0 $ at $ \xi_1 = 0. $

\medskip

The existence and the properties of $ \overline W_1 $ are obtained in the same way as the existence and the properties of $W_0$ in the proposition \eqref{LinSol0}. We suppose first that the function $\hat F_1$ doesn't depend on the angular variable and depends only on $ |\zeta| = \rho $. Then, the function
\begin{multline*} 
 \overline \Psi^{rad}_1(\rho) = \int_0^\rho \frac{1}{s} \int_0^s t \, \hat F_1(t) \, dt \, ds, \quad \overline W_1^{rad} = \overline \Psi_1^{rad}(\rho) + 
  c_1^*, \\
  c_1^* = - \int_0^{1/3} \frac{1}{s} \int_0^s t \, \hat F_1(t) \, dt \, ds
\end{multline*}
satisfies \eqref{ProbLin1} and using that $ \, \| \, |\zeta|^{-\nu + 2} \, \hat F \, \|_{L^\infty(D^2(1/3))} \leq \| F \|_{L^\infty_{\nu/n-2, \nu - 2}(D^2)}, \, $ we get
$$ 
\| \, |\zeta|^{- \nu} \, \overline \Psi_1 \, \|_{L^\infty(D^2)} + |c_1^*| \leq C \, \| F \|_{L^\infty_{\nu/n-2, \nu - 2}(D^2)} .
$$
On the other hand, if 
$$ 
\int_{S^1} \hat F_1(\rho,\theta) \, \rho \, d\theta = 0, \quad \mbox{for all} \quad \rho \in (0,1) ,
$$ 
using the same argument as in the previous proposition, one finds a function $\overline W_1^{mean}$, which satisfies \eqref{ProbLin1} and such that
$$ 
\| |\zeta|^{-\nu} \, \overline W_1^{mean} \|_{L^{\infty}(D^2(1/3))} \leq C \, \| F \|_{L^\infty_{\nu/n-2, \nu - 2}(D^2)} $$
Finally, we put 
$$ 
\, \overline W_1 : = \overline W_1^{mean} + W_1^{rad},  \quad \mbox{and} \quad W_1 : = \overline W_1 \circ \lambda^{-1} .
$$ 
The function $ \chi \, W_1$ is defined in a neighbourhood of $ z = 1 $ and can be extended by zero to the entire punctured unit disc $ D^2_* $. We have
$$ 
\left\{ \begin{array}{l} \Delta(\chi \, W_1) = F_1 + 2 \nabla \chi \, \nabla W_1 + W_1 \, \Delta \, \chi \quad \mbox{in} \quad D^2_* \\[3mm]
   \partial_r (\chi \, W_1) = 0 \quad \mbox{on} \quad S^1 \setminus \{ 1 \} \end{array} \right. $$

The function $ \nabla \chi \, \nabla W_1 + W_1 \, \Delta \, \chi $ belongs to $ L^\infty_{\nu/n-2, \nu - 2}(D^2) $ and has compact support, since is identically zero in the neighbourhood of $ z = 0 $ and $ z = 1 $. According to the proposition \eqref{LinSol0} we can find a function $ W_0 $ which satisfies
$$ 
\left\{ \begin{array}{l} \Delta \, W_0 = F_0 - 2 \nabla \chi \, \nabla W_1 - W_1 \, \Delta \, \chi \quad \mbox{in} \quad D^2_* \\[3mm]
   \partial_r W_0 - \frac{1}{n} W_0 = 0 \quad \mbox{on} \quad S^1 \setminus \{ 1 \} \end{array} \right. $$
By the elliptic regularity in weighted spaces we have
\begin{multline*} 
 \| \, |z-1|^{- \nu} \, W_1 \, \|_{L^\infty(D^2)} \leq C \| \, \| z-1 \|^{ - \nu + 2} \, F_1 \|_{L^\infty(D^2)}, \\[3mm] 
 \| \, |z-1|^{-\nu + 1 } \, \nabla W_1 \, \|_{L^\infty(D^2)} \leq C \| \, \| z-1 \|^{ - \nu + 2} \, F_1 \|_{L^\infty(D^2)}
\end{multline*}
Then, 
$$ 
\| \, |z|^{ - \nu/n + 2} \, \left( F_0 - 2 \nabla \chi \, \nabla W_1 - W_1 \, \Delta \, \chi \right) \, \|_{L^\infty(D^2)} \leq C \|  F \|_{L^\infty_{\nu/n - 2, \nu - 2}(D^2)} .
$$ 
So, we can write $ \, W_0 = \Psi_0 + c_0^* \, n $, where
$$ 
\| \, |z|^{- \nu/n} \, \Psi_0 \, \|_{L^\infty(D^2)} + |c_0^*| \leq C \| F \|_{L^\infty_{\nu/n - 2, \nu - 2}(D^2)} .
$$
The function
$$ 
W_{almost} : =  W_0 + \chi \, W_1 ,
$$
satisfies the problem
$$ 
\left\{ \begin{array}{l} 
 \Delta \, W_{almost} = F \quad \mbox{in} \quad D^2_* \\[3mm]
 \partial_r \, W_{almost} - \frac{1}{n} W_{almost} = - \frac{1}{n} \, \chi \, W_1 \quad \mbox{on} \quad S^1 \setminus \{ 1 \}  \end{array} \right. 
 $$
Take the function
$$ 
h(z) : = \frac{|z|^2 - 1}{2n} \, \chi \, W_1(z). 
$$
then, 
$$ 
\partial_r h - \frac{1}{n} h = \frac{1}{n} \chi \,W_1 \quad \mbox{at} \quad r = 1 \quad \mbox{and} \quad
\Delta h = \frac{|z|^2-1}{2n} \, \chi \, F_1 + \frac{2 r}{n} \partial_r \, ( \chi \, W_1 ) + \frac{2}{n} \, \chi \,  W_1 .
$$ 
Consider the operator 
$$ 
G_h : \, L^{\infty}_{\nu/n-2,\nu-2} \rightarrow L^{\infty}_{\nu/n,\nu} \oplus \mathfrak D_n \oplus \mathfrak D_\chi, \quad  G_h(f) = W_h: =  W_{almost} + h. 
$$
We have 
 $$ \Delta \circ G_h = Id + R_h, \quad R_h \, : L^\infty_{\nu/n-2, \nu-2} \longrightarrow L^\infty_{\nu/n-2, \nu-2} $$
 $$ R_h(f) = \Delta \, h, \quad  \| R_h \| \leq \frac{1}{n} $$
Finally, we define a continuous linear operator $ G = G_h \circ \left( Id + R_h \right)^{-1} $ and the function $ W = G(f) $, the unique solution to \eqref{ProbLin01} which can be written in the form
$$ 
W = \Psi + n \, c_0^* + c_1^* \, \chi \,  \in  L^{\infty}_{\nu/n,\nu}(D^2) \oplus \mathfrak D_n \oplus \mathfrak D_\chi.
$$

\end{proof}

Now we can go back to the initial problem \eqref{ProbLin}. Take $ f(z) = n^2 \, |z|^{2n-2} F(z^n) $ and put $w(z) = W(z^n)$. Then, $ w \in L^{\infty}_{\nu}(D^2) \oplus \mathfrak D_n \oplus \mathfrak D_{\chi_n} $ and can be written in the form
$$ w = \psi + n \, c_0^* + c_1^* \, \chi_n(z), \quad \chi_n(z) = \chi(z^n), \quad \| \gamma^{-\nu} \, \psi \|_{L^{\infty}(D^2)} \leq C \, \| \gamma^{-\nu + 2} f \|_{L^{\infty}(D^2)}. $$

Finally, if we take $ \, f \in \mathcal C^{0,\alpha}_{\nu-2}(D^2_*) \,$, then by classical arguments of the elliptic theory in H\"older weighted spaces $ \, \psi \in  \mathcal C^{0,\alpha}_\nu(D^2_*) \, $ and there exists a constant $C$ such that
$$ 
\|  \psi \|_{C^{2,\alpha}_{\nu}(D^2_*)} \leq \, C \, \| f \|_{C^{0,\alpha}_{\nu-2}(D^2_*)}. 
$$

\section{linear analysis around the catenoidal bridges}

To analyse the linearised mean curvature operator in the neighbourhood of the catenoidal bridges we consider the following problem

\begin{equation} \label{LinCat}
 \left\{ \begin{array}{l} \bar L_{cat} \, w  = f  \quad \mbox{in} \quad \Cyl \\[3mm]
 \partial_\theta w = 0 \quad \mbox{on} \quad \mathbb R \times \{ \frac{\pi}{2}, \frac{3\pi}{2} \}
\end{array} \right.
\end{equation}
\newline where $ \, \bar L_{cat} = \partial_\sigma^2 + \partial_\theta^2 + \frac{2}{\cosh^2 \sigma}, \quad (\sigma, \theta) \in \mathbb R \times \left[ \frac{\pi}{2}, \frac{3 \pi}{2} \right]. $

\medskip

\begin{lemma} \label{ker}
Assume that $\delta\in (-1, 0) \cup (0,1)$. The subspace of $ (\cosh \sigma)^\delta \mathcal{C}^{2,\alpha} \left( \Cyl \right)$ that is invariant by $(\sigma,\theta)\ \mapsto\ (\sigma,-\theta)$ and $(\sigma,\theta) \ \mapsto \ (-\sigma, \theta)$ and solves
\begin{equation*}
\left\{ \begin{array}{l}
 \bar L_{cat} \, w = 0 \quad \mbox{in} \quad \Cyl \\[3mm]
 \partial_\theta w = 0 \quad \mbox{on} \quad \mathbb R \times \{ \frac{\pi}{2},\frac{3\pi}{2} \}
\end{array} \right.
\end{equation*}
is trivial when $ \delta \in (-1,0) $ and is one dimensional and spanned by $ \sigma \tanh \sigma - 1 $ when $\delta \in (0,1)$.
\end{lemma}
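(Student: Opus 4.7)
The plan is to reduce $\bar L_{cat} w = 0$ to a family of ordinary differential equations by Fourier expansion in $\theta$ and analyse each mode explicitly. The Neumann conditions at $\theta = \pi/2$ and $\theta = 3\pi/2$ give the expansion
\[
w(\sigma,\theta) = \sum_{k \geq 0} w_k(\sigma)\, \cos\bigl(k(\theta - \tfrac{\pi}{2})\bigr),
\]
and on $\Cyl$ the reflection $(\sigma,\theta) \mapsto (\sigma,-\theta)$ coincides modulo $2\pi$ with $\theta \mapsto 2\pi - \theta$, which multiplies $\cos(k(\theta-\pi/2))$ by $(-1)^k$. Hence this symmetry forces all odd Fourier modes to vanish. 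Each remaining coefficient satisfies
\[
w_k'' + \Bigl(-k^2 + \frac{2}{\cosh^2 \sigma}\Bigr) w_k = 0,
\]
and the symmetry $\sigma \mapsto -\sigma$ requires $w_k$ to be even.

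For $k=0$, two independent solutions are $\tanh \sigma$ (odd) and $\sigma \tanh \sigma - 1$ (even), so only the latter is admissible. Since $|\sigma\tanh\sigma - 1| = O(|\sigma|)$ as $|\sigma|\to\infty$, this function lies in $(\cosh\sigma)^{\delta}\mathcal C^{2,\alpha}$ precisely when $\delta > 0$; it therefore contributes a one-dimensional kernel when $\delta \in (0,1)$ and nothing when $\delta \in (-1,0)$.

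For $k\geq 2$ (even), the two independent solutions of the ODE behave like $e^{\pm k\sigma}$ as $\sigma\to\pm\infty$; since $|\delta|<1<k$, any solution lying in $(\cosh\sigma)^{\delta}\mathcal C^{2,\alpha}$ would have to decay exponentially at both infinities, i.e.\ be an $L^2$ bound state of the P\"oschl--Teller operator $-\partial_\sigma^2 - 2\cosh^{-2}\sigma$ at eigenvalue $-k^2$. The substitution $t = \tanh\sigma$ converts the equation into the associated Legendre equation with $\ell = 1$ and $m = k$; because $m > \ell$, both independent solutions blow up at the endpoints $t = \pm 1$, so no such bound state exists. Equivalently, the operator has a unique bound state, at $-1$, namely $1/\cosh\sigma$, which corresponds to the $k=1$ mode already excluded by the $\theta$-symmetry. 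Hence $w_k \equiv 0$ for every $k \geq 2$, and putting the three cases together proves the lemma. The main technical point is this absence-of-bound-states statement for $k \geq 2$; I expect the Legendre reduction to handle it most cleanly, since it simultaneously exhibits an explicit basis at each mode and makes the asymptotic behaviour at $\sigma = \pm \infty$ transparent.
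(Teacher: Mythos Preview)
Your proof is correct and follows the same overall strategy as the paper: Fourier decomposition in $\theta$, followed by a mode-by-mode analysis of the resulting ODEs, with the $k=0$ and $k=1$ cases handled exactly as the paper does. The one genuine difference is your treatment of the modes $k\ge 2$. You invoke the P\"oschl--Teller/Legendre reduction to rule out $L^2$ bound states at energy $-k^2$; this is correct but heavier than necessary. The paper simply observes that for $j\ge 2$ one has $-j^2 + \dfrac{2}{\cosh^2\sigma}\le 0$ on all of $\mathbb R$, so once the growth bound forces the solution to be bounded (indeed decaying), the maximum principle immediately gives $w_j\equiv 0$. This one-line argument replaces your Legendre analysis and is worth knowing, since the same trick recurs in the later linear-analysis propositions.
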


\begin{proof} We decompose $w$ in Fourier series
$$
w(\sigma,\theta)=\sum_{j \in \mathbb Z} w_j(\sigma) e^{i j \theta}.
$$
then the functions $w_j$ are solutions of the ordinary equation 
\[
\left(\partial^2_\sigma - j^2 + \dfrac{2}{\cosh^2 \sigma} \right) w_{j} = 0.
\] 
These solutions are asymptotic either to $(\cosh \sigma)^{j}$ or to $(\cosh \sigma)^{-j}$. By hypothesis, the solution is bounded by a constant times $(\cosh \sigma)^{\delta}$ and $ |\delta| < 1 $, so the solution has to be asymptotic to $(\cosh \sigma)^{-j}$, and then the solution is bounded. On the other hand, $-(j)^2 + \dfrac{2}{\cosh^2 \sigma} \leq 0$, so the maximum principle assures that $ w_j=0 $, for all $ j \geq 2 $. 

\medskip

Observe that the imposed symmetry $ (\sigma,\theta) \, \mapsto (\sigma,-\theta) $ and the boundary condition imply $ w_1 = 0 $. When $ j=0 $, $w_0$ is the solution of the ordinary equation
\[
\left(\partial^2_\sigma + \dfrac{2}{\cosh^2 \sigma} \right) w_{0}=0.
\]
By direct computations, we can see that $\,\tanh \sigma \,$ and $ \, \sigma \tanh \sigma - 1 \,$ are two independent solutions. The only solution symmetric with respect to the horizontal plane is $\sigma \tanh \sigma - 1$ and it belongs $ (\cosh \sigma)^\delta \mathcal{C}^{2,\alpha}(\Cyl)$ only when $ \delta \in (0,1) $.

\end{proof}

The next step is to prove that, under some hypothesis, there exists a right inverse of the problem \eqref{LinCat} and it is bounded. 

\begin{proposition} \label{LinSolCat}
Assume that $\delta \in (-1,0) \cup (0,1)$. Then given $ f \in (\cosh \sigma)^\delta \mathcal{C}(\Cyl)$, such that $ f(\sigma,\theta) = f(-\sigma,\theta) = f(\sigma,-\theta) $
there exists a unique constant $d_1^*$ and a unique function $ v \in (\cosh \sigma)^\delta \mathcal{C}^{2,\alpha}(\Cyl)$ such that the function $ w = v + d_1^*$ solves
\begin{equation} \label{syscat} 
\left\{\begin{array}{l}
\left(\partial^2_\sigma + \partial^2_\theta + \dfrac{2}{\cosh^2 \sigma} \right) w = f, \quad \mbox{in} \quad \Cyl \\[3mm]
 \partial_\theta w  = 0,  \quad \mbox{on} \quad \mathbb R \times \{ \frac{\pi}{2}, \frac{3\pi}{2} \}
\end{array} 
\right. 
\end{equation}
\newline and $ \, w(\sigma,\theta) = w(-\sigma,\theta) = w(\sigma,-\theta) \, $. Moreover, we have
\begin{equation} \label{estcat}
 \| (\cosh \sigma)^{- \delta} w \|_{\mathcal C^{2,\alpha} \left( \Cyl \right) } + | d_1^* | \leq C \, \| (\cosh \sigma)^{-\delta} f \|_{\mathcal C^{0,\alpha} 
 \left( \Cyl \right)}
\end{equation} 
\end{proposition}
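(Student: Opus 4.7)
The strategy is to Fourier decompose in the angular variable $\theta$, reducing the boundary value problem to a family of ODEs indexed by a frequency $k$, and then to build a right inverse for each of these ODEs whose norm is controlled uniformly in $k$.

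With the Neumann condition at $\theta=\pi/2$ and $\theta=3\pi/2$ and symmetry $(\sigma,\theta)\mapsto(\sigma,-\theta)$, the admissible eigenfunctions of $-\partial_\theta^2$ are $e_k(\theta)=\cos(k(\theta-\pi/2))$ for $k\in\mathbb N$, and writing $w=\sum_k w_k(\sigma)\,e_k(\theta)$, $f=\sum_k f_k(\sigma)\,e_k(\theta)$, the system \eqref{syscat} becomes the sequence of ODEs
\[
w_k'' +\Big(\frac{2}{\cosh^2\sigma}-k^2\Big)w_k\ =\ f_k,
\]
with each $w_k$ and $f_k$ even in $\sigma$ by the remaining symmetry $(\sigma,\theta)\mapsto(-\sigma,\theta)$.

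For $k\geq 1$ the operator $\partial_\sigma^2-k^2+2/\cosh^2\sigma$ admits two homogeneous solutions with asymptotics $e^{\pm k\sigma}$, so one can build a two-sided Green's function and recover $w_k$ by the usual integral formula; since $|\delta|<1\leq k$, the decay of $(\cosh\sigma)^{-\delta}f_k$ is easily integrated against the kernel and yields the desired estimate $\|(\cosh\sigma)^{-\delta}w_k\|_{L^\infty}\leq C\,\|(\cosh\sigma)^{-\delta}f_k\|_{L^\infty}$ with $C$ independent of $k$. Schauder interior estimates on unit translates then upgrade this to a $\mathcal C^{2,\alpha}$ bound. Summing in $k$ via Parseval controls the non-constant part of $v$. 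The mode $k=1$ is slightly delicate because the zeroth order coefficient does not have a sign, but the explicit solution $1/\cosh\sigma$ gives the required decaying Jost solution at each end.

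The zero-mode is the heart of the argument. By Lemma \ref{ker}, the two homogeneous solutions of $L_0=\partial_\sigma^2+2/\cosh^2\sigma$ are $u_e(\sigma)=\sigma\tanh\sigma-1$ (even) and $u_o(\sigma)=\tanh\sigma$ (odd), with Wronskian $-1$. Since $f_0$ is even, variation of parameters yields the even particular solution
\[
w_0(\sigma)\ =\ u_e(\sigma)\int_0^\sigma u_o(t) f_0(t)\,dt\ -\ u_o(\sigma)\int_0^\sigma u_e(t) f_0(t)\,dt.
\]
The factor $u_o(t)f_0(t)=\tanh(t)f_0(t)$ is odd, and because $\delta<1$ the improper integral $I_\infty:=\int_0^{+\infty}u_o(t)f_0(t)\,dt$ converges; using $u_e(\sigma)=\sigma\tanh\sigma-1$, a direct computation shows
\[
w_0(\sigma)\ =\ -I_\infty\ +\ O\!\big((\cosh\sigma)^{\delta}\big)\qquad\text{as }|\sigma|\to\infty.
\]
We set $d_1^*:=-I_\infty$ and $v_0:=w_0-d_1^*$; then $v_0$ belongs to $(\cosh\sigma)^\delta\mathcal C^{2,\alpha}$ with the announced bound. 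Uniqueness of the pair $(v_0,d_1^*)$ follows from the fact that the only constant in $(\cosh\sigma)^\delta\mathcal C^{2,\alpha}\oplus\mathbb R$ that solves $L_0 c=0$ is $c=0$ (since $L_0(c)=2c/\cosh^2\sigma$), combined with Lemma \ref{ker} which pins down the kernel inside the weighted space in each of the two ranges of $\delta$.

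\textbf{Main obstacle.} The delicate point is the $k=0$ mode: the nontrivial bounded kernel element $u_e$ crosses the decay threshold as $\delta$ passes through $0$, so the meaning of the decomposition $w=v+d_1^*$ is slightly different in the two ranges $\delta\in(-1,0)$ and $\delta\in(0,1)$. In both cases one must verify carefully, using the evenness of $f_0$ and the precise cancellation between the two integrals in the variation-of-parameters formula, that the asymptotic behavior of $w_0$ is indeed a constant plus a term of size $(\cosh\sigma)^\delta$; this is the reason for the symmetry hypotheses on $f$ and the restriction $|\delta|<1$, beyond which one would pick up linearly growing terms from $u_e$ that cannot be absorbed into any single constant $d_1^*$.
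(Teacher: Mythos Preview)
Your overall architecture---Fourier decomposition in $\theta$, Green's function/barrier estimates for the nonzero modes, and an explicit variation-of-parameters treatment of the zero mode---is the same as the paper's. The paper extends $f$ evenly to the full cylinder, handles the modes $|j|\geq 2$ via the barrier $(\cosh\sigma)^\delta/(j^2-2-\delta)$ and a compactness argument on truncated cylinders (following Mazzeo--Pacard--Pollack), and writes down the modes $j=0,\pm 1$ by quadrature, exactly in your spirit. So the strategy is right; but your execution of the zero mode contains a genuine error.

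Your particular solution
\[
w_0(\sigma)=u_e(\sigma)\int_0^\sigma u_o f_0\;-\;u_o(\sigma)\int_0^\sigma u_e f_0
\]
is indeed an even solution, but the asymptotic you claim for it is wrong. First, the integral $I_\infty=\int_0^\infty \tanh t\, f_0(t)\,dt$ converges only when $\delta<0$; for $\delta\in(0,1)$ the integrand behaves like $(\cosh t)^\delta$ and diverges, so your sentence ``because $\delta<1$ the improper integral \ldots converges'' is false on half of the stated range. Second, even when $\delta\in(-1,0)$, your $w_0$ does \emph{not} satisfy $w_0(\sigma)=-I_\infty+O((\cosh\sigma)^\delta)$: since $u_e(\sigma)\sim\sigma$ and $\int_0^\sigma u_o f_0\to I_\infty$, the first term contributes $I_\infty\,u_e(\sigma)\sim I_\infty\,\sigma$, which grows linearly. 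The correct picture is that $w_0-I_\infty\,u_e$ tends to the constant $-\!\int_0^\infty u_e f_0$ (not $-I_\infty$) with remainder $O((\cosh\sigma)^\delta)$; it is this second integral that gives $d_1^*$. In other words, you must first peel off the kernel element $I_\infty\,u_e$ before reading off the constant, and you have misidentified which integral supplies $d_1^*$. For $\delta\in(0,1)$ one needs a different bookkeeping (neither of the two improper integrals converges separately, but the combination $w_0$ still decomposes as constant plus $(\cosh\sigma)^\delta$-term, now with $u_e$ absorbed into $v$ since $u_e\in(\cosh\sigma)^\delta L^\infty$ for $\delta>0$). The paper sidesteps these sign and convergence issues by using the reduction-of-order formula
\[
w_0(\sigma)=\tanh\sigma\int_0^\sigma \tanh^{-2}t\!\int_0^t \tanh\xi\,f_0(\xi)\,d\xi\,dt,
\]
and then observing that for $|f_0|\leq(\cosh\sigma)^\delta$ one can write $w_0+d(1-\sigma\tanh\sigma)=v_0+d_1^*$ with $|v_0|\leq c(\cosh\sigma)^\delta$.

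A smaller point: ``summing in $k$ via Parseval'' is not how one obtains weighted $\mathcal C^{2,\alpha}$ control from mode-wise $L^\infty$ bounds. The paper instead treats all modes $|j|\geq 2$ at once by solving on $(-t,t)\times S^1$ with Dirichlet data, using the barrier above and Schauder estimates to get a bound uniform in $t$, and then passing to the limit $t\to\infty$. Your per-mode Green's function approach can be made to work too, but you need to show the constants decay like $1/k^2$ (which the barrier gives immediately) and then sum in $L^\infty$, not invoke Parseval.
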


\begin{proof}
Let us extand the function $f$ by symmetry to the entire unit cylinder $\mathbb R \times S^1$. Then, there exists a function $w$, which satisfies
\begin{align*} \label{eqcat}
 & \left(\partial^2_\sigma + \partial^2_\theta + \dfrac{2}{\cosh^2 \sigma} \right) w = f \quad \mbox{in} \quad \mathbb R \times S^1, \\[3mm]
 & \mbox{and} \quad w = v + d_1^*, \quad \| (\cosh \sigma)^{-\delta} v \|_{\mathcal C^{2,\alpha}(\mathbb R \times S^1)} + |d_1^*| \leq C \, \| (\cosh \sigma)^{-\delta} f \|_{\mathcal C^{0,\alpha}(\mathbb R \times S^1)}
\end{align*}
\newline This fact follows from the construction given by R. Mazzeo, F. Pacard and D. Pollack in \cite{Maz-Pac-Pol}. Here, we give a short sketch of their proof for the sake of completeness. Let first $f$ be a function whose Fourier series in $\theta$ is given by
$$ 
f(\sigma, \theta) = \sum_{|j| > 2} f_j(\sigma) \, e^{ij \theta} .
$$
Then, for every $ t \in \mathbb R $, there exists a function $ v_t = \sum_{|j|>2} v_j^t(\sigma) \, e^{i j \theta} $, a unique solution of the problem
$$ 
\left( \dfrac{d^2}{ds^2} - j^2 + \dfrac{2}{\cosh^2 \sigma} \right) \, v_j^t = f_j \quad \mbox{in} \quad |\sigma| < t, \quad v_j^t(\pm t) = 0, \quad j \geq 2 .
$$
One can prove this using the maximum principal and the method of sub- and supersolutions, taking $ \dfrac{1}{j^2 - 2 -\delta} \, (\cosh \sigma)^\delta $ as a barrier function. Taking a sum over $ |j|>2 $, we get a function $ v_t $ such that $ \bar L_{cat} v_t = f $ and $ v_t(\pm t) = 0 $. By the Schauder's elliptic theory, there exists a constant $C$ such that
$$ 
\| (\cosh \sigma)^{-\delta} v \|_{\mathcal C^{2,\alpha}( (-t,t) \times S^1 )} \leq C \| (\cosh \sigma)^{-\delta} f \|_{\mathcal C^{0,\alpha}( (-t,t) \times S^1 )}. 
$$
Moreover the constant $C$ does not depend on $t$, which can be proven by contradiction, using the same argument in the proposition \eqref{LinSol0}. Finally, the sequence $ v_t $ admits a subsequence which converges to a function $ v $ on compact subsets of $ \Cyl $ as $t$ tends to infinity and such that \eqref{estcat} is true.

\medskip

In the case when $ f = f_0(\sigma) + f_{\pm 1}(\sigma) \, e^{ \pm i\theta} $, we can construct a solution explicitly, taking 
$$ w_{\pm 1}(\sigma) = \cosh^{-1} \sigma \int_0^\sigma \cosh^2 t \int_0^t \cosh^{-1} \xi \, f_{\pm 1}(\xi) \, d \xi \, dt. $$
and
$$ w_0(\sigma) = \tanh \sigma \int_0^\sigma \tanh^{-2} t \int_0^t \tanh \xi \, f_0(\xi) \, d \xi \, dt $$
\newline Remark, that for $ |f_j(\sigma)| \leq (\cosh \sigma)^{\delta} $ for $j = 0, \pm 1$ there exist constants $ d $ and $ d_1^* $, such that 
$$ w_0 + d \, (1 - s \tanh s) = v_0 + d_1^*, \quad | v_0 | \leq c \, (\cosh \sigma)^{\delta} $$
moreover
$$ | w_{\pm 1} | \leq c \, (\cosh \sigma)^{\delta} $$ 
The estimates for derivatives of $w_0$ and $w_{\pm 1}$ are obtained by Schauder's theory. For all $ \delta \in (-1,1) $ we have
$$ \| (\cosh \sigma)^{-\delta} ( v_0 + w_{\pm 1} ) \|_{\mathcal C^{2,\alpha}\left( \mathbb R \times S^1 \right) } + |d_1^*| \leq \| (\cosh \sigma)^{-\delta} f \|_{\mathcal C^{0,\alpha} \left( \mathbb R \times S^1 \right)} $$

\medskip

Finally, by symmetry the restriction of $w$ to $ \mathbb R \times \left[ \frac{\pi}{2}, \frac{3\pi}{2} \right] $ satisfies $ \left. \partial_\theta \, w \right|_{ \{ \frac{\pi}{2},\frac{3\pi}{2} \} } = 0. $

\end{proof}
\section{Linear analysis around the catenoidal neck}

In this section in order study the linearised mean curvature operator around the catenoidal neck we consider the equation
\begin{equation} \label{LinCat}
 L_{cat} \, w  = f  \quad \mbox{in} \quad \mathbb R \times S^1 ,
\end{equation}
where $ \, L_{cat} = \partial_s^2 + \partial_\phi^2 + \frac{2}{\cosh^2 s} $. 

\medskip

We restrict our attention to functions which are even in the variables $\phi$ and $s$ and invariant under rotations by the angle $\frac{2\pi}{n}$. 
Given $ f \in (\cosh s)^{\delta} \mathcal C^{0,\alpha}(\mathbb R \times S^1), \, $ such that 
$$ 
f(s,\phi) = f(-s,\phi)= f(s, -\phi) = f(s, \phi + \pi/n). 
$$ 
we define $ \, F(s,\phi) = \frac{1}{n^2} f(\frac{s}{n}, \frac{\phi}{n}) \, $ and consider the problem
\begin{equation} \label{LinCatN}
L_{cat}^n \, W = \left( \partial_s^2 - j^2 + \frac{2}{n^2 \cosh^2 \frac{s}{n}} \right) W = F.
\end{equation}

We prove the following two lemmas:

\begin{lemma} \label{ker0}
Assume that $ \delta \in (-1,0) \cup (0,1)$. The subspace of $ \,(\cosh \frac{s}{n})^{\delta} \, L^\infty \left( \mathbb R \times S^1 \right) $ which is invariant by $ (s,\phi) \ \mapsto \ (s,-\phi)$ and $(s,\phi) \ \mapsto \ (-s, \phi) $ and solves
$$ 
L_{cat}^n \,  W = 0 \quad \mbox{in} \quad \mathbb R \times S^1 ,
$$
is trivial for $ \delta \in (-1,0) $ and is one dimensional and spanned by $ \frac{s}{n} \tanh \frac{s}{n} - 1 $ for $ \delta \in (0,1) $.
\end{lemma}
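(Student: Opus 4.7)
The plan is to mirror the proof of Lemma~\ref{ker} and reduce the PDE to a family of one-dimensional problems by Fourier decomposition in the angular variable. The three imposed symmetries --- $2\pi/n$-periodicity and evenness in $\phi$, together with evenness in $s$ --- force the expansion
\[
W(s,\phi) = \sum_{k \geq 0} w_k(s) \cos(k n \phi),
\]
with each coefficient $w_k$ an even function of $s$. Substituting into $L^n_{cat} W = 0$ yields, for each $k \geq 0$, the ODE
\[
w_k''(s) + \left( \frac{2}{n^2 \cosh^2(s/n)} - (kn)^2 \right) w_k(s) = 0,
\]
together with the weighted bound $|w_k(s)| \leq C (\cosh(s/n))^\delta$.

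The first step would be to kill the modes $k \geq 1$ by the maximum principle. For such $k$ one has $(kn)^2 \geq n^2 \geq 4 > 2/n^2$, so the bracketed coefficient above is strictly negative on all of $\mathbb{R}$. The potential $2/(n^2 \cosh^2(s/n))$ decays exponentially at infinity, so the two independent branches of the ODE grow or decay like $e^{\pm |kn| |s|}$ as $|s| \to \infty$. The weight $(\cosh(s/n))^\delta$ only grows at rate $|\delta|/n < 1 \leq kn$, which excludes the growing branch; hence $w_k(s) \to 0$ as $|s| \to \infty$. A nontrivial $w_k$ then attains either a positive maximum or a negative minimum at some finite $s_\ast$, where $w_k''(s_\ast)$ would have the wrong sign in view of the negative coefficient, giving a contradiction and forcing $w_k \equiv 0$.

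The remaining radial mode $w_0$ satisfies $w_0''(s) + \frac{2}{n^2 \cosh^2(s/n)} w_0(s) = 0$. The substitution $s = n \sigma$ sends this to the standard catenoidal Jacobi ODE
\[
\tilde w_0''(\sigma) + \frac{2}{\cosh^2 \sigma} \tilde w_0(\sigma) = 0,
\]
already appearing in the proof of Lemma~\ref{ker}, whose general solution is a linear combination of $\tanh \sigma$ and $\sigma \tanh \sigma - 1$. Evenness in $s$ (equivalently in $\sigma$) eliminates the odd solution $\tanh \sigma$. Finally, $\frac{s}{n}\tanh\frac{s}{n} - 1$ grows only linearly in $|s|/n$, so it belongs to $(\cosh(s/n))^\delta L^\infty(\mathbb{R}\times S^1)$ exactly when $\delta > 0$ and not when $\delta < 0$. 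Combining the two regimes gives the triviality of the kernel for $\delta \in (-1,0)$ and its one-dimensional span for $\delta \in (0,1)$.

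The only point that requires a moment's attention is the asymptotic argument used to exclude the non-radial modes when $\delta \in (0,1)$: here one must check that the weighted growth rate $|\delta|/n$ is strictly smaller than the exponential rate $kn$ of the bad branch, which is straightforward given $|\delta| < 1$ and $n \geq 2$. Otherwise the proof is essentially a one-dimensional mirror of Lemma~\ref{ker}, with the rescaling $s = n\sigma$ accounting for the extra factor of $n$ in the potential.
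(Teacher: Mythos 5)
Your proof follows the same strategy as the paper's sketch (Fourier decomposition, maximum principle for the nonzero modes, explicit ODE solutions for the radial mode), but it contains a conceptual misstep in the Fourier decomposition that is worth flagging. The lemma imposes only two symmetries on $W$: evenness in $\phi$ and evenness in $s$. It does \emph{not} impose $2\pi/n$-periodicity in $\phi$. That rotational symmetry is imposed on $w$ in the unscaled variables; after the substitution $v(s,\phi)=V(ns,n\phi)$, the $2\pi/n$-invariance of $v$ is exactly what makes $V$ (and hence $W$) a well-defined function on all of $\mathbb{R}\times S^1$, with \emph{no} residual periodicity. Consequently the Fourier expansion of $W$ is $\sum_{j\ge 0} w_j(s)\cos(j\phi)$ with all $j\ge 0$ allowed, not only the multiples $j=kn$.

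Fortunately this does not break the argument, because the estimate you actually need is $j^2 > 2/n^2$ for every $j\ge 1$, and this holds as soon as $n\ge 2$ (indeed $1 > 2/n^2$). So replacing $(kn)^2\ge n^2$ by $j^2\ge 1$ everywhere in your second paragraph (and $e^{\pm kn|s|}$ by $e^{\pm j|s|}$) repairs the statement and the conclusion is unchanged: all modes $j\ge 1$ die, and the radial mode $j=0$ is treated exactly as you do, yielding $\frac{s}{n}\tanh\frac{s}{n}-1$ when $\delta\in(0,1)$ and nothing when $\delta\in(-1,0)$. As written, your proof only establishes triviality of the kernel within the smaller $2\pi/n$-invariant subspace, which is weaker than the lemma's claim; after the correction above it matches the paper's intended argument.
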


\begin{proof}
 The proof of this lemma is analogous to the proof of the lemma \eqref{ker} and uses the maximum principal for the Fourier modes $ j \geq 1 $ and the symmetry 
 with respect to the horizontal plain for $j=0$.  
\end{proof}

\begin{proposition} \label{LinProbCat0}
Assume that $ \delta \in (-1,0) \cup (0,1)$. Then, given a function \newline $ 
F \in \, \left( \cosh \frac{s}{n} \right)^{  \delta } L^\infty \left( \mathbb R \times S^1 \right), $ such that $ F(s,\phi) = F(-s,\phi) = F(s,-\phi) ,
$ there exist a unique constant $ d_0^* $ and a unique function $ V \in \left( \cosh \frac{s}{n} \right)^{ \delta} \, L^\infty(\mathbb R \times S^1) $ such that the function $ \, W = V + d_0^* \, $ solves
\begin{equation} \label{eqcat0} 
\left( \partial_s^2 + \partial_\phi^2 + \frac{2}{n^2 \cosh^2 \frac{s}{n}} \right) W = F ,
\end{equation}
and there exists a constant $C$, which does not depend on $n$, such that
\begin{equation} \label{estcat0} 
 \| \left( \cosh \frac{s}{n} \right)^{-\delta}  W \|_{ L^\infty (\mathbb R \times S^1)} + |d_0^*| \leq C \, \| \left( \cosh \frac{s}{n} 
 \right)^{-\delta} F \|_{ L^\infty(\mathbb R \times S^1)} .
\end{equation}
\end{proposition}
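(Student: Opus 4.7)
The proof will closely parallel that of Proposition 8.2, but now on the full cylinder $\mathbb R\times S^1$ with the additional requirement that every constant be uniform in $n$. The plan is to Fourier-decompose in $\phi$: because of the assumed evenness, $F(s,\phi)=F_0(s)+\sum_{j\geq 1}F_j(s)\cos(j\phi)$ and we seek $W$ with the same form. Each Fourier coefficient satisfies an ordinary differential equation
\[
\Bigl(\partial_s^2-j^2+\tfrac{2}{n^2\cosh^2(s/n)}\Bigr)W_j=F_j,
\]
to be solved separately, the resulting $W_j$ then summed. The contribution from $d_0^*$ will come entirely from $j=0$.

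For $j\geq 1$, a direct computation gives
\[
L_{cat}^n\bigl(\cosh(s/n)\bigr)^\delta=\bigl(\cosh(s/n)\bigr)^\delta\left[\tfrac{\delta^2\tanh^2(s/n)+\delta/\cosh^2(s/n)}{n^2}+\tfrac{2}{n^2\cosh^2(s/n)}-j^2\right],
\]
which, for $n\geq 2$, $j\geq 1$ and $\delta\in(-1,1)$, is bounded above by $-c\,(\cosh s/n)^\delta$ with a positive constant $c$ independent of $n$. Hence $(\cosh s/n)^\delta$ is a super-solution uniformly in $n$, and the sub/super-solution argument of Mazzeo–Pacard–Pollack recalled in the proof of Proposition 8.2 (solve on $(-t,t)\times S^1$ with homogeneous Dirichlet data, apply the maximum principle with this barrier to get an $n$- and $t$-independent bound, then send $t\to\infty$) furnishes a unique even-even solution whose Fourier sum is controlled in the required norm with a constant independent of $n$.

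For the radial mode $j=0$, I would use the change of variables $u=s/n$, which transforms the ODE into $(\partial_u^2+2/\cosh^2 u)\tilde W_0=n^2\tilde F_0$ with $\tilde W_0(u):=W_0(nu)$. The two independent homogeneous solutions $\tanh u$ and $u\tanh u-1$ have Wronskian $1$, so variation of parameters yields the explicit even solution
\[
\tilde W_0(u)=-n^2(u\tanh u-1)\!\!\int_0^u\!\!\tanh\xi\,\tilde F_0(\xi)\,d\xi+n^2\tanh u\!\!\int_0^u\!(\xi\tanh\xi-1)\tilde F_0(\xi)\,d\xi.
\]
Because $|\tilde F_0(u)|\leq C\,n^{-2}(\cosh u)^\delta$ (the factor $n^{-2}$ coming from the norm rescaling $\cosh(s/n)=\cosh u$), both integrals converge as $u\to\pm\infty$ when $\delta\in(-1,1)$, and a direct asymptotic analysis exhibits $\tilde W_0(u)=d_0^*+\tilde V_0(u)$, where $d_0^*$ is the asymptotic limit of the first integral (more precisely, $d_0^*=n^2\int_0^\infty\tanh\xi\,\tilde F_0(\xi)d\xi$ after regrouping) and $|\tilde V_0(u)|\leq C\,\|(\cosh s/n)^{-\delta}F\|_{L^\infty}(\cosh u)^\delta$. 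Returning to the variable $s$ gives $W_0(s)=d_0^*+V_0(s)$ with the desired uniform bound. Uniqueness of the pair $(d_0^*,V_0)$ is enforced by Lemma 9.1: for $\delta\in(-1,0)$ the kernel in the weighted space is trivial, while for $\delta\in(0,1)$ the one-dimensional kernel $\tfrac{s}{n}\tanh(s/n)-1$ is eliminated by imposing, say, $V_0(0)$ to take the value prescribed by the above formula.

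The main obstacle is the $n$-uniformity. For $j\geq 1$ it is purely a question of noting that the perturbation $\tfrac{2}{n^2\cosh^2(s/n)}$ contributes a symbol that stays uniformly strictly negative away from zero. The more delicate point is the $j=0$ mode: the seemingly dangerous factor $n^2$ in front of the source in the $u$-variable is compensated exactly by the factor $n^{-2}$ in the rescaled norm of $\tilde F_0$, which is the reason the rescaling $u=s/n$ is the correct one. Assembling the Fourier modes, one obtains existence with the $n$-independent estimate \eqref{estcat0}; the contradiction argument used in Proposition 7.1 can then be invoked to rule out any non-uniform blow-up of the constant.
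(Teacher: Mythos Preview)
Your overall strategy---Fourier decomposition in $\phi$, a barrier/maximum-principle argument for the nonzero modes, and an explicit integral representation for the radial mode---is exactly the route the paper takes.

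There is, however, a genuine gap in your treatment of the zero mode. After the substitution $u=s/n$ you assert that $|\tilde F_0(u)|\le C\,n^{-2}(\cosh u)^\delta$, with the $n^{-2}$ ``coming from the norm rescaling $\cosh(s/n)=\cosh u$.'' This is false: since $\tilde F_0(u)=F_0(nu)$ and the weight $(\cosh s/n)^\delta$ becomes exactly $(\cosh u)^\delta$ under $s=nu$, one has
\[
\bigl\|(\cosh u)^{-\delta}\tilde F_0\bigr\|_{L^\infty}=\bigl\|(\cosh s/n)^{-\delta}F_0\bigr\|_{L^\infty},
\]
with no power of $n$ whatsoever. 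Consequently the factor $n^2$ produced by the change of variables is \emph{not} compensated, and your argument as written yields only an estimate of the form $|d_0^*|+\|(\cosh s/n)^{-\delta}V_0\|_{L^\infty}\le Cn^2\|(\cosh s/n)^{-\delta}F\|_{L^\infty}$ on the radial part. (Separately, for $\delta\in(0,1)$ your integral $\int_0^\infty\tanh\xi\,\tilde F_0(\xi)\,d\xi$ diverges, so the displayed formula for $d_0^*$ is not even well-defined in that range.)

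The paper's own proof is equally terse at this step: it writes down the double-integral formula for $W_0$ and then simply asserts, by analogy with the earlier proposition, the existence of $V_0$ and $d_0^*$ with the required bound, without isolating the $n$-dependence. The $n^{-2}$ you are reaching for actually enters only through the relation $F=\tfrac{1}{n^2}f(\cdot/n,\cdot/n)$ that links $F$ to the original data $f$ on the standard catenoid; that relation lies \emph{outside} the hypotheses of the proposition, and it is what ultimately makes the final estimate on $v$ in terms of $f$ (stated just after the proposition) uniform in $n$. Your ``exact compensation'' remark is thus morally pointing at the right mechanism but is attached to the wrong object.
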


\medskip

\begin{proof}

We decompose both $F$ and $W$ in Fourier series
$$ 
F = \sum_{j \in \mathbb Z} F_j(s) \, e^{i j \phi}, \quad \mbox{and} \quad  W = \sum_{j \in \mathbb Z} W_j(s) \, e^{i j \phi}. 
$$
First, let $ F(s,\phi) = \sum\limits_{|j|>1} F_j(s) \, e^{i \phi j} $. Then, for every $ t \in \mathbb R $, using the method introduced in \cite{Maz-Pac-Pol}, we can solve 
$$ 
\left( \partial_s^2 - j^2 + \frac{2}{n^2 \cosh^2 \frac{s}{n}} \right) V_j^t = F_j, \quad V_j^t( \pm t ) = 0 ,
$$
by the maximum principal taking $ \dfrac{1}{j^2 n^2 - 2 - \delta} \, (\cosh \frac{s}{n})^\delta $ as a barrier function. When $t$ tends to infinity, we get a sequence of functions which admits a subsequence converging on compact sets of $ \mathbb R \times S^1 $ which satisfies \eqref{eqcat0} and \eqref{estcat0}.
When $ F(s,\phi) = F_0(s) $ we find explicitly
$$ 
W_0(s) =  \tanh \frac{s}{n} \int_0^\frac{s}{n} \tanh^{-2}t \int_0^{t} \tanh \xi \, F_0(n \xi) \, d \xi \, dt. 
$$
Like in the proposition 9.1 there exist a function $ V_0 \in \left( \cosh \frac{s}{n} \right)^{\delta} L^\infty(\mathbb R \times S^1) $ and a constant $ d_0^* $, such that the function $ W_0 = V_0 + d_0^* $ satisfies \eqref{eqcat0} and \eqref{estcat0}.

\end{proof}

Remark now that the function $ v(s, \phi) = V(ns,n\phi) $ is invariant under rotations by the angle $\frac{2\pi}{n}$ and satisfies
$$ 
L_{cat} \, v = f, \quad \mbox{and} \quad \| (\cosh s)^{-\delta} v \|_{L^\infty(\mathbb R \times S^1)} \leq C \, \| (\cosh s)^{-\delta} f \|_{L^\infty(\mathbb R \times S^1)} .
$$

Finally, by the Schauder's theory, if $ f \in (\cosh s)^\delta \mathcal C^{0,\alpha}(\mathbb R \times S^1) $, then $ v \in (\cosh s)^\delta \mathcal C^{2,\alpha}(\mathbb R \times S^1) $ and
$$ \| (\cosh s)^{-\delta} v \|_{\mathcal C^{2,\alpha}(\mathbb R \times S^1)} \leq C \, \| (\cosh s)^{-\delta} f \|_{\mathcal C^{0,\alpha}(\mathbb R \times S^1)} $$

\section{ The Fixed Point Theorem argument }

We parametrize $\tS$ by the following sub-domain of the unit disc:
$$ 
\Omega_\e = \ \{ z \in D^2 \, : \, \te  \leq |z| \leq 1 \} \, \setminus \, \overset{n}{\underset{m=1}{\cup}} \lambda_m \{ \zeta \in \mathbb C_- \, : |\zeta| \leq \frac{\e}{2} \}. 
$$
Take a real number $ \nu \in (0,1) $. We denote $ \mathcal E^{k,\alpha}_{\nu,n} $ the Banach space which is a subspace of $ \mathcal C^{k,\alpha}_\nu(\Omega_\e) $ invariant under the transformation $z \mapsto \bar z$ and the rotations by the angle $\frac{\pi}{n}$. Remark, that when we use the change of variables
$$ 
z = \te \cosh s \, e^{i \phi}  \quad \mbox{or} \quad z = \lambda_m( 1/2 \, \e \cosh \sigma \, e^{i \theta} ) ,
$$
the functions 
$$ 
(s, \phi) \, \mapsto \, w(\te \cosh s \, e^{i \phi}) \quad \mbox{and} \quad (\sigma, \theta) \, \mapsto \, w( \lambda_m ( 1/2 \, \e \cosh \sigma \, e^{i \theta} )) ,
$$
belong to the functional spaces 
$$ (\te \cosh s)^{\nu} \mathcal C^{2,\alpha}( (-s_*,s_*) \times S^1 ) \quad \mbox{and} \quad (\e \cosh \sigma)^{\nu} \mathcal C^{2,\alpha}( (-\sigma_*,\sigma_*) \times \left[ \pi/2, 3\pi/2 \right] ). 
$$ 
Putting together the results of the section 6, for every function $ \, w \in \mathcal E^{2,\alpha}_{n,\nu} \, $ small enough we can construct a surface $\tS(w)$ which is close to $\tS$ and whose mean curvature can be expressed as
$$ 
\mathcal H(w) = \mathcal H(0) + \mathcal L \, w + Q( w ) ,
$$
where $ \mathcal H(0) $ is the mean curvature of $\tS$, $ \mathcal L $ is a linear differential operator, which has the form
$$ 
\mathcal L = \left\{ \begin{array}{l} L_{gr} + \frac{\e^{2-\beta}}{\gamma^4} \, L^\gamma_z  \quad \mbox{in} \quad \Omega_{gr} \cup \Omega_{glu}^0 \underset{m=1}{\overset{n} \cup} \Omega_{glu}^m \\[2mm]
L_{cat} + (1 + \frac{\e^{2 -\beta}}{\gamma^2}) \, L_{s,\phi} \quad \mbox{in} \quad \Omega_{cat}^0 \\[2mm] 
\left( \bar L_{cat} + \frac{\e^{-\beta}}{\gamma} \, L_{\sigma,\theta} \right)( \ \ \circ \lambda_m ) \quad \mbox{in} \quad \Omega_{cat}^m \end{array} \right. 
$$
and $Q$ is the nonlinear part of $\mathcal H(w)$ which can be written in the form
$$ 
Q(w) = \dfrac{\e^{1-\beta}}{\gamma^4} \, Q_z^{2,\gamma}(w) + \dfrac{\e^{-\beta}}{\gamma^4} Q_z^{3,\gamma}(w) ,
$$
where the properties of $L^\gamma$, $Q_2^\gamma$ and $ Q_3^\gamma $ are described in the section 6. First, we verify that
$$ 
\| \gamma^2 \, \mathcal H(0) \|_{\mathcal C^{0,\alpha}_\nu(\Omega_\e)} \leq c \, \e^{5/3 - \beta - \nu}, \quad \forall \beta \in (0,1) .
$$
It follows from the fact that away from $0$ and the $n$-th roots of unity, where $\tilde S_n$ is parametrized as a graph of one of the functions $ \pm \, \tilde{\mathcal G}_n $, the mean curvature satisfies
$$ \mathcal H(0) = \left| P_3(\tilde{\mathcal G}_n) \right|, $$
and its norm is bounded by a constant times $ \e^{3 - \nu - \beta} $. On the other hand, in the gluing regions the mean curvature is bounded by a constant times $ \e^{3-\beta} / \gamma^4 $ and in the catenoidal regions by $ \e/\gamma $.

\medskip

Secondly, there exist constants $ c \in \mathbb R $ and $ p \in \mathbb N $, such that
\begin{equation*} \label{estQ}
 \| \gamma^2 \, Q(w) \|_{\mathcal C^{0,\alpha}_\nu ( \Omega_\e ) } \leq c \, \e^{2/3 - p(\alpha + \nu + \beta)} \, \| w \|_{\mathcal C^{2,\alpha}_\nu 
 (\Omega_\e) }, \quad \mbox{for} \quad  \| w \|_{\mathcal C^{2,\alpha}_\nu (\Omega_\e) } \leq c \, \e^{5/3 - \nu - \beta}.
\end{equation*}

The surface $\tS(w)$ is minimal if and only if
$$ 
\mathcal L \, w =  - \mathcal H(0) -  Q(w) .
$$
If $\mathcal L$ is an invertible linear continuous operator then function $w$ should satisfy 
\begin{equation} \label{FixedPointEq} w = - \mathcal L^{-1} \, \left( \mathcal H(0) + Q(w) \right) = \mathcal A(w) \end{equation}
If we show that there exists an open ball $ \mathcal B \subset \mathcal E^{k,\alpha}_{n,\nu} $ such that $ \mathcal A : \mathcal B \longrightarrow \mathcal B $ is a contraction mapping, then by the Banach Fixed Point theorem, there will exist a unique function $w_n$ a solution to \eqref{FixedPointEq} such that  $ \tilde \Sigma_n = \tS(w_n) $ and $ \Sigma_n = \mathcal S_n(w_n) $ are free boundary minimal surfaces in $B^3$.

\subsection{Inverse Linear Operator}

We would like to find a linear operator 
$$ 
\mathcal M \, : \,  \mathcal E^{0,\alpha}_{n,\nu} \longrightarrow \mathcal E_{n,\nu}^{2,\alpha}, \quad \mbox{such that} \quad \mathbb \gamma^2 \, \mathcal L \circ \mathcal M (f) = f .
$$
Take a partition of unity on the unit disk $D^2$~: 
$$ 
\varphi_i \in \mathcal C^{\infty}(D^2), \quad \mbox{such that} \quad {\sum_{i = 0}^n \varphi_i = 1}, \quad \mbox{and} \quad \varphi_i = \delta_{ij} \quad \mbox{in} \quad U_i \supset \ z_i, 
$$
where $z_0 = 0$ and $z_m$, $m=1, \ldots, n$ are the $n$-th roots of unity and $U_i$ are small neighborhoods of $z_i$. Given function $f \in \mathcal E^{0,\alpha}_{n,\nu}$ we can decompose it as
$$ 
f = \sum_{i=0}^n \varphi_i \, f = \sum_{i=0}^n f_i, \quad \mathrm{supp}(f_i) \subset U_i. 
$$

Let us fix the coordinates $ (s,\phi) $ and $ (\sigma, \theta) $ and take $s_{\te} \in \mathbb R_+$ such that 
$$ \te \cosh s_{\te} = 1. $$ 
We can parametrize two copies of the unit punctured disk $ D^2 $ by 
\begin{align*} 
 & z_{+} = r_{+} \, e^{i \phi}, \quad \mbox{where} \quad r_{+} = e^{s -s_{\te}}, \quad s \in (- \infty, s_{\te}) \quad \mbox{and} \\
 & z_{-} = r_{-} \, e^{i \phi}, \quad \mbox{where} \quad r_{-} = e^{- s - s_{\te}}, \quad s \in (- s_{\te}, + \infty) 
\end{align*} 
Remark that we can also parametrize two copies of the punctured half-plane $ \mathbb C_- $ by
\begin{align*} 
 & \zeta_{+} = \rho_{+} \, e^{i \theta}, \quad \mbox{where} \quad \rho_{+} = e^{\sigma}, \quad \sigma \in (- \infty, \infty) \quad \mbox{and} \\
 & \zeta_{-} = \rho_{-} \, e^{i \theta}, \quad \mbox{where} \quad \rho_{-} = e^{-\sigma}, \quad \sigma \in (- \infty, + \infty) 
\end{align*}
and take the conformal mappings $  \lambda_m^\pm \, : \, \mathbb C_- \longrightarrow D^2 $ given by
$$  \lambda_m^\pm(\zeta_\pm) = e^{\frac{2 \pi i m}{n}}\dfrac{ 1 + \zeta_\pm }{ 1-\zeta_\pm}. $$ 

We define the cut-off functions $ \vartheta_0 \in \mathcal C^{\infty}\left( [-s_{\te},s_{\te}] \right) $ and $ \bar \vartheta \in \mathcal C^\infty(\mathbb R) $ such that
\begin{align*} 
 & \vartheta \equiv 1, \quad \mbox{for} \quad s > 1, \quad \vartheta \equiv 0, \quad \mbox{for} \quad s < - 1 \\
 & \bar \vartheta \equiv 1, \quad \mbox{for} \quad \sigma > 1, \quad \bar \vartheta \equiv 0, \quad \mbox{for} \quad \sigma < - 1
\end{align*}
Take $ \, f_0(s,\phi) = f_0(\te \cosh s \, e^{i\phi}) \, $ and $ \, \bar f_*(\sigma, \theta) = f_m \circ \lambda_m(\e \cosh \sigma \, e^{i \theta}). \, $ We can decompose 
\begin{align*} 
 & f_0(s,\phi) = f_0^{+}(s,\phi) + f_0^{-}(s,\phi) = \vartheta_0(s) \, f_0(s,\phi) + (1 - \vartheta_0(s)) \, f_0(s,\phi), \quad \mbox{and} \\[3mm]
 & \bar f_*(\sigma,\theta) = \bar f_*^{+}(\sigma,\theta) + \bar f_*^{-}(\sigma,\theta) = \bar \vartheta(\sigma) \, \bar f_*(\sigma, \theta) + (1 - \bar  
 \vartheta(\sigma)) \bar f_*(\sigma,\theta)
\end{align*} 

\medskip

We can extend the function $ f_0^+ $ by zero to the interval $ (-\infty, s_{\te}) $. It defines a function $ \breve f_0^+ $ on the unit punctured disc, parametrized by the variable $ z_+ $. In the same manner, we can extend the function $ f_0^- $ by zero to to interval $ (s_{\te}, + \infty) $ and that defines a function $ \breve f_0^- $ on the unit punctured disk parametrized by $ z_- $. We have
$$ 
\breve f_0^\pm(r_\pm \, e^{i \phi}) = f_0^\pm(\pm \log r_\pm + s_{\te}, \phi) .
$$
We also define the functions $ \mathring f_*^{\pm} $ on the half-plane $ \mathbb C_- $ by $ \mathring f_*^\pm(\rho_\pm \, e^{ i \theta}) = \bar f_*^\pm (\pm \log \rho_\pm,\theta) \,  $. Finally, we put
$$ 
\breve f_m^\pm(z_\pm) = \mathring f_*^\pm \circ \left( \lambda_m^\pm \right)^{-1} \quad \mbox{and} \quad \breve f^\pm = \sum_{i=0}^n \, \breve f_i^\pm ,
$$
each of the functions $ \breve f^{\pm} $ on one of the two copies of $ \overline D^2 \setminus \{0,z_1, \ldots, z_n \} $.

\medskip

\textbf{First approximate solution:} Using the results of the section 7, we find functions $ \breve w_{gr}^\pm \in \mathcal E_{2,\alpha}^{n,\nu} \oplus \mathfrak D_n \oplus \mathfrak D_{\chi_n} $, solutions to the problems
$$ 
\left\{ \begin{array}{l} \gamma^2(z_\pm) \, \Delta \left( B(z_\pm) \, \breve w_{gr}^\pm \right) = \breve f_{\pm} \quad \mbox{in} \quad D^2_* \\[3mm] 
   \partial_{r_\pm} \breve w_{gr}^\pm = 0 \quad \mbox{in} \quad S^1 \setminus \{z_1, \ldots, z_n \} \end{array} \right. 
$$
We have
\begin{align*} 
 \breve w_{gr}^\pm(z_\pm) = & \breve \psi^\pm(z_\pm) + n \, c_0^* + c_1^* \, \chi_n(z_\pm), \quad \mbox{and} \\[3mm] 
 & \| \breve w_{gr}^\pm \|_{\mathcal C^{2,\alpha}_\nu(D^2_*) \oplus \mathfrak D_n \oplus \mathfrak D_{\chi_n}} \leq C \, \| f \|_{\mathcal C^{2, 
 \alpha}_\nu(D^2_*)}
\end{align*} 
We put
\begin{multline*} 
 \psi^{+}(s,\phi) = \breve \psi^{+}(e^{ s-s_{\te} } \, e^{i \phi}) \in (\te \cosh s)^{\nu} \mathcal C^{2,\alpha}( (-\infty,s_{\te}) \times 
 S^1), \\[3mm] 
 \psi^{-}(s,\phi) = \breve \psi^{-}(e^{ -s - s_{\te} } \, e^{i \phi}) \in (\e \cosh s)^{\nu} \mathcal C^{2,\alpha}( (- s_{\te}, + \infty) 
 \times S^1) ,
\end{multline*}
\newline and $ \bar \psi^\pm = \psi^\pm \circ \lambda^\pm_m \in (\e \cosh s)^{\nu} \mathcal C^{2,\alpha}(\Cyl) $. 

\medskip

\medskip

\textbf{First estimate of the error:} We would like now to analyse the behaviour of the function
$$ h := \gamma^2 \mathcal L \, (\psi^+ + \psi^-) - f $$
Remark, that the change of variables $ z = \te \, \cosh s \, e^{i \phi} $ transforms
$$ \Delta_z \leadsto  \dfrac{1}{ {\te}^2 \cosh^2 s} \left(  \coth^2 s \, \partial_s^2 + \coth s \, \partial_s (1 - \coth^2 s) + \partial_\phi^2 \right) $$
On the other hand, the change of variables $ z_\pm = e^{\pm s - s_\e} \, e^{i\phi} $ transforms
$$ \Delta_{\pm z} \leadsto e^{ \mp \, 2(s-s_\e) } \, \left( \partial_s^2 + \partial_\phi^2 \right) $$
In this section we will denote as $c$ any positive constant which does not depend on $\e$. Moreover, $ \left| e^{\pm s - s_\e} - \e \cosh s \right| \leq \frac{c \, \e}{\cosh s} $. Using once the again the partition of unity, we decompose 
$$ h = \sum_{i=0}^n \phi_i \, h = \sum_{i=0}^n h_i, \quad \bar h_* = h_m \circ \lambda_m. $$
Regarding $ h_0 $ as a function in variables $(s,\phi)$ we can extend it by $0$ to $\mathbb R \times S^1$. Similarly, regarding $\bar h_*$ as a function of $(\sigma,\theta)$ we can extend it by $0$ to $ \Cyl $. Fix $ \delta \in (-1,0) $. Then, there exists a universal constant $C$, such that
\begin{multline*} 
 \| (\cosh s)^{-\delta} h_0 \|_{\mathcal C^{0,\alpha}(\mathbb R \times S^1)} \leq C \, \| f \|_{\mathcal C^{0,\alpha}_\nu(\Omega_\e)}, \\[3mm] 
 \| (\cosh \sigma)^{-\delta} \bar h_* \|_{\mathcal C^{0,\alpha}(\Cyl)} \leq C \, \| f \|_{\mathcal C^{0,\alpha}_\nu(\Omega_\e)}
\end{multline*} 

\medskip

\textbf{Help of the linear analysis around the catenoids:} Using the results of the sections 7 and 8, we can find functions $ w_{cat}^0 $ and $ \bar w_{cat}^* $, such that
$$ \frac{\gamma^2}{ 2 \, \te^2 \cosh^2 s} \, L_{cat} \, w_{cat}^0 = h_0 \quad \mbox{and} \quad \frac{\bar \gamma^2}{\e^2 \cosh^2 \sigma} \, \bar L_{cat} \, \bar w_{cat}^* = \bar h_*, \quad \partial_\theta \, \bar w_{cat}^* = 0, $$
where
$$ L_{cat} =  \partial_s^2 + \partial_\phi^2 + \dfrac{2}{\cosh^2 s} \quad \mbox{and} \quad \bar L_{cat} = \partial_\sigma^2 + \partial_\theta^2 + \dfrac{2}{\cosh^2 \sigma} $$
and $ \bar \gamma = \gamma \circ \lambda_m $. We can write $ w_{cat}^0 = v_{cat}^0 + d_0^*$, and $ \bar w_{cat}^* = \bar v_{cat}^* + d_1^*, $ where
\begin{multline*} 
 \| (\cosh s)^{-\delta} v_{cat}^0 \|_{\mathcal C^{2,\alpha}(\mathbb R \times S^1)} + |d_0^*| \leq C \, \| (\cosh s)^{-\delta} h^\e_0 \|_{\mathcal C^{0,
 \alpha}(\mathbb R \times S^1)} \quad \mbox{and} \\[3mm]
 \| (\cosh \sigma)^{-\delta} \bar v_{cat}^* \|_{\mathcal C^{2,\alpha}(\Cyl)} + |d_1^*| \leq \| (\cosh \sigma)^{-\delta} \bar h_* \|_{\mathcal C^{0,\alpha}(\Cyl)}
\end{multline*} 
\newline We also denote $ w_{cat}^m = \bar w_{cat}^* \circ \lambda_m $ and $ v_{cat}^m =\bar v_{cat}^* $. We have 
$$ \left. \partial_r w_{cat}^m \right|_{r=1} = \left. \partial_r v_{cat}^m \right|_{r=1} = 0. $$

\medskip

\textbf{Cut-off functions:} Let, as before, $ \eta_{\te}^0 \in \mathcal C^{\infty}(D^2) $ denote a cut-off function, such that
$$ \eta^0_{\te}(z) = \eta_{\te}^0(|z|), \quad \eta_{\te}^0(z) \equiv 1 \quad \mbox{for} \quad |z| < 1/2 \, \te^{1/2} \quad \mbox{and} \quad \eta_{\te}^0(z) \equiv 0 \quad \mbox{for} \quad |z| > 2 \, \te^{1/2}  .$$
and $ \bar \eta_\e \in \mathcal C^{\infty}(\mathbb C_-) $ the cut-off function in $\mathbb C_-$, such that
$$ \bar \eta_\e(\zeta) = \bar \eta_\e(|\zeta|), \quad \bar \eta_{\e}(\zeta) \equiv 1 \quad \mbox{for} \quad |\zeta| < 1/2 \, \e^{2/3} \quad \mbox{and} \quad \bar \eta_{\e}(\zeta) \equiv 0 \quad \mbox{for} \quad |\zeta| > 2 \, \e^{2/3}. $$
We put $ \eta_\e^m : = \bar \eta_\e \circ \lambda_m^{-1}. $

\medskip

Furthermore, we introduce the cut-functions $\varkappa_{\te}^0 \in \mathcal C^{\infty}(D^2) $ and $ \bar \varkappa_\e \in \mathcal C^{\infty}(\mathbb C_-) $, such that
\begin{multline*}
 \varkappa^0_{\te}(z) = \varkappa_{\te}^0(|z|), \quad \varkappa_{\te}^0(z) \equiv 1 \quad \mbox{for} \quad |z| < 2 \, \te^{1/2} \quad \mbox{and} \quad 
 \varkappa_{\te}^0(z) \equiv 0 \quad \mbox{for} \quad |z| > 3 \, \te^{1/2} \\
 \bar \varkappa_\e(\zeta) = \bar \varkappa_\e(|\zeta|), \quad \bar \varkappa_{\e}(\zeta) \equiv 1 \quad \mbox{for} \quad |\zeta| < 2 \, \e^{2/3} \quad 
 \mbox{and} \quad \bar \varkappa_{\e}(\zeta) \equiv 0 \quad \mbox{for} \quad |\zeta| > 3 \, \e^{2/3}
\end{multline*}
and $ \varkappa_\e^m := \bar \varkappa_\e \circ \lambda_m^{-1}. $

\medskip

Let $I_{\te}^+ = [s^g_{\te}, S^g_{\te}] \quad \mbox{and} \quad I_{\te}^- = [- S^g_{\te}, - s^g_{\te}] \, $
be the two subintervals of $ [-s_{\te},s_{\te}] $ where we glue together the graph of the functions $ \pm \, \tilde{\mathcal G}_n $ with the catenoidal neck. 

\medskip

Similarly, let $ \, J^+_\e = [\sigma^g_\e, \Sigma^g_\e] \quad \mbox{and} \quad J_\e^- = [-\Sigma_\e^g, - \sigma_\e^g]  \, $ be the two intervals where we glue together the graph of the functions $ \pm \, \bar{ \mathcal G }_n$ with the half-catenoidal necks. We put
$$ R_{\te} = e^{ - s_{\te}^g - s_{\te} }, \quad  r_{\te}  = e^{ - S_{\te}^g - s_{\te} } \quad \mbox{and} \quad \mathcal P_\e = e^{ - \sigma_\e^g }, \quad \rho_\e = e^{-\Sigma_\e^g}. $$
We denote $ \xi_{\te}^0 \in \mathcal C^{\infty}(D^2) $ a cut-off function such that
$$ 
\xi_{\te}^0(z) = \xi_{\te}^0(|z|), \quad \xi_{\te}^0(z) \equiv 1 \quad \mbox{for} \quad |z| > R_{\te}, \quad \mbox{and} \quad \xi_{\te}^0(z) \equiv 0 \quad |z| < r_{\te}  .
$$
In the same manner we define the cut-off function $ \bar \xi_\e \in \mathcal C^{\infty}(\mathbb C) $, such that
$$ \bar \xi_{\e}(\zeta) = \bar \xi_{\e}(|\zeta|), \quad \bar \xi_{\e}(\zeta) \equiv 1 \quad \mbox{for} \quad |\zeta| > \mathcal P_{\e}, \quad \mbox{and} \quad \bar \xi_{\e}(\zeta) \equiv 0 \quad |\zeta| < \rho_{\te}, $$
and we put
$$ \xi_\e^m : = \bar \xi_\e \circ \lambda_m^{-1}. $$
Finally, we define the cut-off function $ \xi_\e \in \mathcal C^{\infty}(D^2) $ : $ \xi_\e : = \xi_{\te}^0 \, \underset{m=1}{\overset{n} \Pi} \xi_\e^m, $
and also the functions 
$$ \xi_\e^\pm \in \mathcal C^\infty( [ -s_{\te}, s_{\te} ] \times S^1 ) : \quad \xi_\e^\pm(s,\phi) = \xi_\e( e^{ \pm s - s_{\te} } \, e^{i \phi} ). $$
 
Notice, that $ \xi_\e^+ $ is a function which is equal to $1$ on the part of the surface, which is parametrized as a graph of the function $ - \tilde{ \mathcal G}_n $, the parts, where we glue this graph with upper parts of the catenoidal neck and the half-catenoidal bridges and also on the neck and the bridges them selves. It is identically equal to zero on the part of the surface, which is parametrized as a graph of the function $ \tilde{ \mathcal G}_n $. In the same manner, one can easily deduce the properties of $\xi_\e^-$.

\medskip

\textbf{Regular terms:} Consider the function
$$ w_{reg} := \xi_\e^+ \, \psi_{gr}^+ \, + \, \xi_\e^- \ \psi_{gr}^- + \eta_{\te}^0 \, v_{cat}^0 + \sum_{m=1}^n \, \eta_\e^m \, v_{cat}^m. $$

\medskip

\textbf{Deficiency terms:} We define the functions 
$$ u_0(s): = 1 - s \, \tanh s, \quad \bar u(\sigma): = 1 - \sigma \, \tanh \sigma, \quad u_m: =  \bar u \circ \lambda_m^{-1}, $$
and take $\Gamma_n$ and $\tilde \Gamma_n$ the functions lying in the kernel of the operator $ L_{gr} $ and defined section 4. We also denote
$$ \Gamma_n^\pm(s,\phi) = \Gamma_n( e^{ \pm s - s_{\te} } \, e^{i \phi} ), \quad \tilde \Gamma_n^\pm (s,\phi) = \tilde \Gamma_n( e^{ \pm  s - s_{\te} } \, e^{i \phi} ). $$

Consider the function
\begin{align*}
 \kappa_\e(s,\phi) & : = \varkappa_{\te}^0 \, ( a_0 \, u_0 + d_0^* ) +  \sum_{m=1}^n \varkappa_\e^m \, ( a_1 \, u_m + d_1^*) \\
 & + ( 1 -\varkappa_{\te}^0 - \sum_{m=1}^n \varkappa_\e^m ) \Big[ c_0^* \, n + c_1^* \, \chi_n + b_0 \, \left( \xi_\e^{+} \, \tilde \Gamma_n^{+} + \xi_\e^{-} 
 \, {\tilde \Gamma}_n^{-} \right) \\
 & + b_1 \, \left( \xi_\e^+ \, \Gamma_n^{+} + \xi_\e^- \, \Gamma_n^{-} \right) \Big]
\end{align*}
where the constants $a_0$, $a_1$, $b_0$ and $b_1$ are chosen in such a way that the norm of $ \kappa $ and its derivatives would be small in $\Omega_\kappa^0 \underset{m=1}{\overset{n} \cup } \Omega_\kappa^m $, where
$$ \Omega_\kappa^0 : = \mathrm{supp} (\varkappa_{\te}^0) \cap \mathrm{supp}(1 - \varkappa_{\te}^0), \quad \mbox{and} \quad \Omega_\kappa^m := \mathrm{supp} (\varkappa_{\e}^m) \cap \mathrm{supp}(1 - \varkappa_{\e}^m). $$

\medskip

More precisely, in $ \Omega_\kappa^0 $ we have
$$ \xi_\e^{+} \, \tilde \Gamma_n^{+} + \xi_\e^{-} \, \tilde \Gamma_n^{-} =
\left\{ \begin{array}{l} -2n + 2n \, s_{\te} - 2n \, s + \mathcal O(\e), \quad \mbox{when} \quad s > 0 \\[3mm] 
-2n + 2n \, s_{\te} + 2n \ s + \mathcal O(\e), \quad \mbox{when} \quad s < 0 \end{array} \right. $$
On the other hand,
$$ u_0(s) = \left\{ \begin{array}{l} 1 - s + \mathcal O(\te), \quad \mbox{for} \quad s > 0 \\[3mm]
1 + s + \mathcal O(\te) \quad \mbox{for} \quad s < 0 \end{array} \right. ,
$$
and
$$ \xi_\e^{+} \, \Gamma_n^{+} + \xi_\e^{-} \, \Gamma_n^{-} = - \dfrac{n}{2} + \mathcal O(\te), \quad \chi_n(s,\phi) \equiv 0, \quad \varkappa_\e^m \equiv 0 $$
\newline This gives us the first equation on $ a_0, a_1, b_0, b_1 $:
\begin{equation} \label{GluKer1} 
 a_0 = 2n \, b_0 , \quad c_0^* \, n - \dfrac{b_1 \, n}{2} - 2n \, b_0 + 2n \, b_0 \, s_{\te} = 2n \, b_0 + d_0^* .
\end{equation}

Similarly, in $ \Omega_\kappa^m $ we have for all $\beta \in (0,1)$
$$ \xi_\e^{+} \, \Gamma_n^{+} + \xi_\e^{-} \, \Gamma_n^{-}  = \left\{ \begin{array}{l} - \dfrac{n}{2} + c(n) - \sigma + \mathcal O(\e^{2/3 - \beta}), \quad \mbox{when} \quad \sigma > 0 \\[3mm]
- \dfrac{n}{2} + c(n) + \sigma + \mathcal O(\e^{2/3 - \beta}), \quad \mbox{when} \quad \sigma < 0
\end{array} \right. $$

$$ \bar u(\sigma) = \left\{ \begin{array}{l} 1 - \sigma + \mathcal O(\e^{2/3}), \quad \mbox{for} \quad \sigma > 0 \\[3mm] 
1 + \sigma + \mathcal O(\e^{2/3}) \quad \mbox{for} \quad \sigma < 0 \end{array} \right. ,
$$
and
$$ 
\tilde \Gamma_n(s,\phi) = - n + \mathcal O(\e^{2/3}), \quad \chi_n(s,\phi) \equiv 1, \quad \varkappa_{\te}^0 \equiv 0 .
$$
\newline This gives us the second equation
\begin{equation} \label{GluKer2}
 a_1 = b_1, \quad c_0^* \, n + c_1^* + b_1( c(n) - \dfrac{n}{2}) - b_0 \, n = b_1 + d_0^* .
\end{equation}
Then, the system \eqref{GluKer1} and \eqref{GluKer2} has a unique solution.

\medskip

\textbf{Second approximate solution}: As an approximate solution, we take the function
$$ w_{app} : = w_{reg} + \kappa. $$
We would like to estimate the norm in $\mathcal E^{2,\alpha}_{n,\nu}$ of the function 
$$ \gamma^2 \, \mathcal L \, w_{app} - f. $$ 
We do it separately in the regions $ \Omega_{cat}^0 $, $ \Omega_{glu}^0 $, $ \Omega_\kappa^0 $, $ \Omega_{gr} $, $ \Omega_\kappa^m $, $ \Omega_{glu}^m $ and $\Omega_{cat}^m $.

\medskip

\begin{enumerate}
 
\item In $ \Omega_{gr}^+ $, where the surface $\tS$ is parametrized as a graph of the function $- \tilde{\mathcal G}_n$ we have
$$ 
\eta_{\te}^0 \equiv \eta_\e^m \equiv 0, \quad \mbox{and} \quad \xi_\e^+ \equiv 1, \quad \xi_\e^- \equiv 0 .
$$
We see that 
\begin{multline*} 
 f = f^+, \quad w_{app} = \psi_{gr}^{+} + \kappa, \quad \mbox{and} \quad \gamma^2 \, \mathcal L = \gamma^2 \, L_{gr} + \dfrac{\e^{2-\beta}}{\gamma^2} \, 
 L^{\gamma} \\ 
 \kappa = c_0^* \, n + c_1^* \, \chi_n + b_0 \, \tilde \Gamma_n^{+} + b_1 \, \Gamma_n^{+} \quad \mbox{in} \quad \Omega_\kappa^c \quad \mbox{and} \quad \kappa 
 = \mathcal O(\te) \quad \mbox{in} \quad \Omega_\kappa \cap \Omega_{gr}^{+}.
\end{multline*}
Then,
\begin{equation*} \label{EstLinDif1} 
 \| \gamma^2 \, \mathcal L \, w_{app} - f \|_{\mathcal C^{0,\alpha}_\nu(\Omega_{gr}^+)} \leq c \, \e^{2/3 - \beta - \nu} \, \|f \|_{\mathcal C^{0,\alpha}_\nu(\Omega_\e)}, \quad \forall \beta \in (0,1).
\end{equation*}
Naturally, the same estimate is true in $ \Omega_{gr}^{-} $, where the surface is parametrized as a graph of $ \tilde{\mathcal G}_n $ and where
$$ \eta_{\te}^0 \equiv \eta_\e^m \equiv 0, \quad \mbox{and} \quad \xi_\e^- \equiv 1, \quad \xi_\e^+ \equiv 0, $$
and $ f = f^- $, $ w_{app} = \psi_{gr}^{-} + \kappa $.

\medskip

\item In the region $ \Omega_{glu}^{0,+} $, where we glue together the upper part of the catenoidal neck with the graph of the function $ - \tilde{\mathcal G}_n $ we have
$$ \eta_\e^m \equiv 0, \quad \xi_\e^+ \equiv 1 \quad \mbox{and} $$
\begin{multline*} 
 f = f^+, \quad w_{app} = \psi_{gr}^{+} + \xi_\e^- \, \psi_{gr}^{-} - \eta_{\te}^0 \, v_{cat}^0 + \kappa, \\
 \kappa = a_0 \, u_0 + d_0^* \quad \mbox{and} \quad \gamma^2 \, \mathcal L = \gamma^2 \, L_{gr} + \e^{1-\beta} \, L^{\gamma_0}. 
\end{multline*}  
We obtain for $\e$ small enough
\begin{multline*} 
 \| \gamma^2 \, \mathcal L \, w_{app} - f \|_{\mathcal C^{0,\alpha}_\nu(\Omega_{glu}^{0,+})}\\[3mm]  
 \leq c \, \left( \| \psi^-_{gr} \|_{\mathcal C^{2,\alpha}_\nu(\Omega_{glu}^{+,0})} + \| v^0_{cat} \|_{\mathcal C^{2,\alpha}_\nu(\Omega_{glu}^{+,0})} + \| \kappa \|_{\mathcal C^{2,\alpha}_\nu(\Omega_{glu}^{+,0})} \right) \leq c \, \e^{1/2|\delta| - \nu - \beta} \| f \|_{\mathcal C^{0,\alpha}_\nu(\Omega_\e)}
\end{multline*}
\newline By symmetry the same estimate is true in the region $\Omega_{glu}^{0,-}$, where we glue the lower part of the catenoidal neck with the graph of the function $\tilde {\mathcal G}_n$ and where
$$ \eta_\e^m \equiv 0, \quad \xi_\e^- \equiv 1 \quad \mbox{and} $$
$$ f = f^-, \quad w_{app} = \psi_{gr}^{-} + \xi_\e^+ \, \psi_{gr}^{+} - \eta_{\te}^0 \, v_{cat}^0  + \kappa, \quad \kappa = a_0 \, u_0 + d_0^* $$

\medskip

\item In the regions $ \Omega_{glu}^{m,+} $ where we glue together the upper part of the half-catenoidal bridges with the graph of the function $ - \tilde{ \mathcal G}_n $ we have

$$ 
\eta_{\te}^0 \equiv 0, \quad \xi_\e^+ \equiv 1 \quad \mbox{and} 
$$
\begin{multline*}
 f = f^+, \quad w_{app} = \psi_{gr}^{+} + \xi_\e^- \, \psi_{gr}^{-} - \eta_{\te}^m \, v_{cat}^m + \kappa \\
 \kappa = a_1 \, u_m + d_1^* \quad \mbox{and} \quad \gamma^2 \, \mathcal L = \gamma^2 \, L_{gr} + \e^{2/3 - \beta } \, L^{\gamma_m} .
\end{multline*}
We obtain 
\begin{multline*} 
 \| \gamma^2 \, \mathcal L \, w_{app} - f \|_{\mathcal C^{0,\alpha}_\nu(\Omega_{glu}^{m,+})} \\[3mm]  
 \leq c \, \left( \| \psi^-_{gr} \|_{\mathcal C^{2,\alpha}_\nu(\Omega_{glu}^{+,m})} + \| v^m_{cat} \|_{\mathcal C^{2,\alpha}_\nu(\Omega_{glu}^{+,m})} + \| \kappa \|_{\mathcal C^{2,\alpha}_\nu(\Omega_{glu}^{+,m})} \right) \leq c \, \e^{2/3|\delta| - \nu - \beta} \, \| f \|_{\mathcal C^{0,\alpha}_\nu(\Omega_\e)}
\end{multline*}
\newline Once again, by symmetry the same estimate is true in $\Omega_{glu}^{m,-}$.

\medskip

\item In the catenoidal region $ \Omega_{cat}^0 $, where the surface $ \tS $ coincides with the catenoidal neck, we have
$$ \eta_\e^m \equiv 0, \quad \xi_\e^+ \equiv \xi_\e^- \equiv 1 . $$
\begin{multline*} 
 f = f^{+} + f^{-}, \quad w_{app} = \psi_{gr}^{-} + \psi_{gr}^{+} - w_{cat}^0, \quad 
 \gamma^2 \, \mathcal L = \frac{\gamma^2}{ 2 \, \te^2 \cosh^2 s} \, L_{cat} + \e^{1-\beta} \, L
\end{multline*} 
We obtain
\begin{equation*}
 \| \gamma^2 \, \mathcal L \, w_{app} - f  \|_{\mathcal C^{0,\alpha}_\nu(\Omega_{cat}^0)} \leq c \, \e^{1-\nu-\beta} \, \| f \|_{\mathcal C^{0,
 \alpha}_\nu(\Omega_\e)}.
\end{equation*}

\medskip

\item Finally, in the region $ \Omega_{cat}^{m} $, where the surface $ \tS $ coincides with the $m$-th half-catenoidal neck, we have
$$ 
\eta_\e^0 \equiv 0, \quad \xi_\e^+ \equiv \xi_\e^- \equiv 1 \quad \mbox{and} 
$$
$$ 
f : = f^{+} + f^{-}, \quad w_{app} = \psi_{gr}^{-} + \psi_{gr}^{+} - w_{cat}^m, \quad \gamma^2 \, \mathcal L ( \ \ \circ \lambda_m ) = \frac{\gamma^2}{ \e^2 \cosh^2 \sigma} \, \bar L_{cat} + \e^{2/3 - \beta} \, L  $$
We obtain
\begin{equation*}
 \| \gamma^2 \, \mathcal L \, w_{app} - f \|_{\mathcal C^{0,\alpha}_\nu(\Omega_{cat}^{m})} \leq c \, \e^{2/3-\nu - \beta} \, \| f \|_{\mathcal C^{0,\alpha}_\nu(\Omega_\e)}.
\end{equation*}

\end{enumerate}

\medskip

Now let us introduce the operator $ \, \mathcal M_{app} \, : \, \mathcal C^{0,\alpha}_\nu(\Omega_\e) \longrightarrow C^{2,\alpha}_\nu(\Omega_\e), \, $ such that
$$ \mathcal M_{app} (f) = w_{app} \quad \mbox{and} \quad \| \mathcal M_{app} \| \leq c \, \e^{-\nu - \beta}, \quad \forall \beta \in (0,1). $$
On the other hand, if we take $ \delta $ sufficiently close to $-1$ and $ \nu $ and $ \beta $ small enough, then the operator
$$ \mathcal R_{app} : = \gamma^2 \, \mathcal L \circ \mathcal M_{app} - \mathrm{Id} \, : \, \mathcal C^{0,\alpha}_\nu(\Omega_\e) \longrightarrow \mathcal C^{0,\alpha}_\nu(\Omega_\e), $$
satisfies
$$ \| R_{app} \| \ll 1. $$
So, $ \, \mathrm{Id} + \mathcal R_{app} \, $ is an invertible operator. We denote
$$ \mathcal M_{exact} : = \mathcal M_{app} \circ \left( \mathrm{Id} + \mathcal R_{app} \right)^{-1}, \quad \mbox{and} \quad \gamma^2 \, \mathcal L \circ \mathcal M_{exact} = \mathrm{Id}. $$

\subsection{Conclusion}

Using the results of the previous subsection, we conclude that there exist constants $ c \in \mathbb R_+ $ and $ p, q \in \mathbb N $, such that
$$ 
\| \mathcal M_{exact} \left( \gamma^2 \, \mathcal H(0) \right) \|_{\mathcal E^{2,\alpha}_{n,\nu}} \leq \, \e^{5/3 - q(\nu + \beta)} ,
$$
and
$$ 
\| \mathcal M_{exact} \left( \gamma^2 \, Q(w) \right) \|_{\mathcal E^{2,\alpha}_{n,\nu}} \leq \, \e^{1/3 - p(\alpha + \nu + \beta)} \, \| w \|_{\mathcal E^{2,\alpha}_{n,\nu}} .
$$
\newline This yields that for $ \alpha, \nu, \beta $ and $\e$ small enough
$$ 
\| \mathcal M_{exact} \left( \gamma^2 \, Q(w_1) \right) - \mathcal M_{exact} \left( \gamma^2 \, Q(w_2) \right) \|_{\mathcal E^{2,\alpha}_{n,\nu}} \leq \dfrac{1}{2} \, \| w_1 - w_2 \|_{\mathcal E^{2,\alpha}_{n,\nu}} ,
$$
\newline and $ \mathcal M_{exact} \left( \gamma^2 \, Q(\cdot) \right) $ is a contracting mapping in the ball
$$ 
\mathcal B_\e : = \left\{ w \in \mathcal E^{2,\alpha}_{n,\nu} \, : \, \| w \|_{\mathcal E^{2,\alpha}_{n,\nu}} \leq  \e^{5/3 - q(\nu + \beta)} \right\} .
$$
And the result of this article follows from the Banach fixed point theorem.

\end{document}